\documentclass{amsart}

\usepackage[latin1]{inputenc}
\usepackage{harvard}
\usepackage{graphicx}
\usepackage{psfrag}
\usepackage{amsthm,amsfonts,amssymb,amscd,amsmath}

\newtheorem{theorem}{Theorem}[section]
\newtheorem*{mainthm}{Main Theorem}

\newtheorem{lemma}[theorem]{Lemma}
\newtheorem{oldthm}{Theorem}

\newtheorem{prop}[theorem]{Proposition}

\newtheorem{remark}[theorem]{Remark}

\newtheorem{definition}[theorem]{Definition}

\newcommand{\field}[1]{\mathbb{#1}}

\def\C{\mathbb{C}}
\def\R{\mathbb{R}}

\def\eps{\epsilon}
\def\la{\lambda}

\def\fC{\field{C}}

\def\fI{\field{I}}

\def\fR{\field{R}}

\def\cA{\mathcal{A}}
\def\cB{\mathcal{B}}
\def\cC{\mathcal{C}}
\def\cD{\mathcal{D}}

\def\cF{\mathcal{F}}

\def\cI{\mathcal{I}}

\def\cL{\mathcal{L}}
\def\cN{\mathcal{N}}

\def\cR{\mathcal{R}}

\def\cP{\mathcal{P}}
\def\cS{\mathcal{S}}

\def\cW{\mathcal{W}}

\def\cO{\mathcal{O}}

\def\aF{\cF_{\varrho_\beta}^{\beta,\rho_\beta}(s^*)}
\def\aaF{\cF_{\varrho}^{\beta,\rho}(s^*)}
\def\bF{\cF_{\varrho_\beta}^{\beta,\rho_\beta}(s_\beta)}
\def\Bstar{\cB_\varrho(s^*)}

\def\bB{{\bf B}}

\date{2014-12-01}

\begin{document}
\title[Spectral properties of renormalization for area-preserving maps]{ Spectral properties of renormalization for area-preserving maps  }
\author{Denis Gaidashev}
\address{
Department of Mathematics, Uppsala University, Uppsala, Sweden\\
{\tt gaidash@math.uu.se}}

\author{Tomas Johnson}
\address{Fraunhofer-Chalmers Research Centre for Industrial Mathematics
Chalmers University of Technology, 
SE-412 88 Gothenburg, Sweden,
{\tt tomas.johnson@fcc.chalmers.se} }

\begin{abstract}
Area-preserving maps have been observed to undergo a universal period-doubling cascade, analogous to the famous Feigenbaum-Coullet-Tresser period doubling cascade in one-dimensional dynamics. A renormalization approach has been used by Eckmann, Koch and Wittwer in a computer-assisted proof of existence of a conservative renormalization fixed point. 

Furthermore, it has been shown by Gaidashev, Johnson and Martens that {\it infinitely renormalizable maps} in a neighborhood of this fixed point admit invariant Cantor sets with vanishing Lyapunov exponents on which dynamics for any two maps is smoothly conjugate.

This rigidity is a consequence of an interplay between the decay of geometry and the convergence rate of renormalization towards  the fixed point.  

In this paper we prove a  result which is crucial for a demonstration of rigidity: that an upper bound on this convergence rate of renormalizations  of infinitely renormalizable maps is sufficiently small.
\end{abstract}

\maketitle

\setcounter{page}{1}

\tableofcontents

\bigskip \section*{Introduction}

Following the pioneering discovery of the Feigenbaum-Coullet-Tresser period doubling universality in unimodal maps \cite{Fei1}, \cite{Fei2}, \cite{TC}, universality ---  independence of the quantifiers of the geometry of orbits and bifurcation cascades in families of maps of the choice of a particular family --- has been demonstrated to be a rather generic phenomenon in dynamics. 

Universality problems are typically approached via {\it renormalization}. In a renormalization setting one introduces  a {\it renormalization} operator on a functional space, and demonstrates that this operator has a {\it hyperbolic fixed point}.  This approach has been very successful in one-dimensional dynamics, and has led to explanation of universality in unimodal maps \cite{Eps2}, \cite{Lyu}, \cite{Mar}, critical circle maps \cite{dF1,dF2,Ya1,Ya2} and holomorphic maps with a  Siegel disk \cite{McM,Ya3,GaiYa}. There is, however, at present  no complete understanding of universality in conservative systems, other than in the  case of the universality for systems ``near integrability'' \cite{AK,AKW,Koch1,Koch2,Koch3,Gai1,Kocic,KLDM}. 

Period-doubling renormalization for two-dimensional maps has been extensively studied in \cite{CEK1,dCLM,LM}.   Specifically, the authors of \cite{dCLM} have considered strongly dissipative H\'enon-like maps of the form  
\begin{equation}\label{Henon-like}
F(x,y)=(f(x)-\epsilon(x,y),x),
\end{equation}
where $f(x)$ is a unimodal map (subject to some regularity conditions), and $\epsilon$ is small. Whenever the one-dimensional map $f$ is {\it renormalizable}, one can define a renormalization of $F$, following \cite{dCLM}, as 
$$R_{dCLM}[F]=H^{-1} \circ F \circ F \arrowvert_U \circ H,$$
where $U$ is an appropriate neighborhood of the critical value $v=(f(0),0)$, and $H$ is an explicit non-linear change of coordinates. \cite{dCLM} demonstrates that  the degenerate map $F_*(x,y)=(f_*(x),x)$, where $f_*$ is the Feigenbaum-Collet-Tresser fixed point of one-dimensional renormalization, is a hyperbolic fixed point of $R_{dCLM}$. Furthermore, according to \cite{dCLM}, for any infinitely-renormalizable map of the form $(\ref{Henon-like})$, there exists a hierarchical family of ``pieces'' $\{B_\sigma^n\}$,  organized by inclusion in a dyadic tree, such that the set
$$\cC_F=\bigcap_n \bigcup_\sigma B_\sigma^n$$
is an attracting Cantor set on which  $F$ acts as an {\it adding machine}. Compared to the Feigenbaum-Collet-Tresser one-dimensional  renormalization, the new striking feature of the two dimensional renormalization for highly dissipative maps $(\ref{Henon-like})$, is that {\it the restriction of the dynamics to this Cantor set  is not rigid}. Indeed, if the average Jacobians of $F$ and $G$ are different, for example, $b_F<b_G$, then the conjugacy $F \arrowvert_{\cC_F}  \, { \approx \atop { h} }    \, G \arrowvert_{\cC_G}$ is not smooth, rather it is at best
a  H\"older continuous function with a definite upper bound on the H\"older exponent:
$\alpha \le {1 \over 2} \left(1+{\log b_G \over \log b_F }\right) <1.$

The theory has been also generalized to other combinatorial types in \cite{Haz}, and also to three dimensional dissipative H\'enon-like 
maps in \cite{Ywn}.

Finally, the authors of \cite{dCLM} show that the geometry of these Cantor sets is rather particular: the Cantor sets have universal  bounded geometry in ``most''  places, however there are places in the Cantor set were the geometry is unbounded. Rigidity and universality as we know from one-dimensional dynamics has a probabilistic nature for strongly dissipative H\'enon like maps. See \cite{LM2} for a discussion 
of probabilistic universality and probabilistic rigidity.

It turns out that the period-doubling renormalization for area-preserving maps is very different from the dissipative case.

A universal period-doubling cascade in families of area-preserving maps was observed by several authors in the early 80's \cite{DP,Hel,BCGG,Bou,CEK2,EKW1}.  The existence of a hyperbolic fixed point for the period-doubling renormalization operator 
$$R_{EKW}[F]=\Lambda^{-1}_F \circ F \circ F  \circ \Lambda_F,$$
where $\Lambda_F(x,u)=(\lambda_F x, \mu_F u)$ is an $F$-dependent {\it linear} change of coordinates, has been proved with computer-assistance in \cite{EKW2}. 

We have proved in \cite{GJ2}  that {\it infinitely renormalizable} maps in a neighborhood of the fixed point of \cite{EKW2} admit  a ``stable'' Cantor set, that is the set on which the Lyapunov exponents are zero. We have also shown in the same publication  that the conjugacy of stable dynamics is at least bi-Lipschitz on a submanifold of locally infinitely renormalizable maps  of a finite codimension. Furthermore,  \cite{GJM} improves  this conclusion in the following way.

\bigskip

\noindent
{\bf Rigidity for Area-preserving Maps.} {\it The period doubling Cantor sets of area-preserving maps in  the universality class of the Eckmann-Koch-Wittwer renormalization fixed point are smoothly conjugate.}

\bigskip

A crucial ingredient of the proof in \cite{GJM} is a new tight bound on the spectral radius of the renormalization operator. The goal of the present paper is to prove this new bound.

 We demonstrate that the spectral radius of the action  of $D R_{EKW}$, evaluated at the Eckmann-Koch-Wittwer fixed point $F_{EKW}$, restricted to the tangent space $T_{F_{EKW}}\cW$ of the stable manifold $\cW$ of the infinitely renormalizable maps, is equal exactly to the absolute value of the `` horizontal'' scaling parameter 
$$\rho_{\rm spec} \left(D R_{EKW}[F_{EKW}] \arrowvert_{T_{F_{EKW}}\cW}\right)=|\lambda_{F_{EKW}}|=0.2488 \ldots.$$

Furthermore, we show that the single  eigenvalue $\lambda_{F_{EKW}}$ in the spectrum of $D R_{EKW}[F_{EKW}]$ corresponds to an eigenvector, generated by a very specific  coordinate change.  
To eliminate this  {\it irrelevant}  eigenvalue from the renormalization spectrum, we introduce an $F$-dependent {\it nonlinear} coordinate change $S_F$ into the  period-doubling renormalization scheme
$$R_c[F]:=\Lambda^{-1}_F \circ S^{-1}_F \circ F \circ F  \circ S_F \circ \Lambda_F,$$
compute the spectral radius of the restriction of the spectrum of $D R_c [F^*]$ to its stable subspace $T_{F^*} \cW$ at the fixed point $F^*$ of $R_c$,  and obtain the following  spectral bound, which is of crucial importance to our proof of rigidity.

\bigskip

\noindent {\bf Main Theorem.}
$$\rho_{\rm spec}\left(D R_c [F^*] \arrowvert_{T_{F^*} \cW}\right) \le 0.1258544921875.$$

\section*{Acknowledgment}
This work was started during a visit by the authors to the Institut Mittag-Lefler (Djursholm, Sweden) as part of the research program on ``Dynamics and PDEs''. The hospitality of the institute is gratefully acknowledged. The second author was funded by a postdoctoral fellowship from the Institut Mittag-Lefler, he is currently funded by a postdoctoral fellowship from \textit{Vetenskapsr\aa det} (the Swedish Research Council).

\setcounter{section}{0}

\bigskip

\section{Renormalization for area-preserving reversible twist maps} 

An ``area-preserving map'' will mean an exact symplectic diffeomorphism of a subset of ${\fR}^2$ onto its image.

Recall, that an area-preserving map that satisfies the twist condition
$$\partial_u \left( \pi_x F(x,u) \right) \ne 0$$
everywhere in its domain of definition can be uniquely specified by a generating function $S$:
\begin{equation}\label{gen_func}
\left( x \atop -S_1(x,y) \right) {{ \mbox{{\small \it  F}} \atop \mapsto} \atop \phantom{\mbox{\tiny .}}}  \left( y \atop S_2(x,y) \right), \quad S_i \equiv \partial_i S.
\end{equation}

Furthermore, we will assume that $F$ is reversible, that is 
\begin{equation}\label{reversible}
T \circ F \circ T=F^{-1}, \quad {\rm where} \quad T(x,u)=(x,-u).
\end{equation}

For such maps it follows from $(\ref{gen_func})$ that 
$$S_1(y,x)=S_2(x,y) \equiv s(x,y),$$
and
\begin{equation}\label{sdef}
\left({x  \atop  -s(y,x)} \right)  {{ \mbox{{\small \it  F}} \atop \mapsto} \atop \phantom{\mbox{\tiny .}}} \left({y \atop s(x,y) }\right).
\end{equation}

It is this ``little'' $s$ that will be referred to below as ``the generating function''. If the equation $-s(y,x)=u$ has a unique differentiable solution $y=y(x,u)$, then the derivative of such a map $F$ is given by the following formula:

\begin{equation}\label{Fder}
DF(x,u)=\left[ 
\begin{array}{c c}
-{s_2(y(x,u),x) \over s_1(y(x,u),x)} &  -{1 \over s_1(y(x,u),x)} \\
s_1(x,y(x,u))-s_2(x,y(x,u)) {s_2(y(x,u),x) \over s_1(y(x,u),x)}  & -{s_2(x,y(x,u)) \over s_1(y(x,u),x)} 
\end{array}
\right]. 
\end{equation}

The period-doubling phenomenon can be illustrated with the area-preserving H\'enon family (cf. \cite{Bou}) :
$$ H_a(x,u)=(-u +1 - a x^2, x).$$

Maps $H_a$ have a fixed point $((-1+\sqrt{1+a})/a,(-1+\sqrt{1+a})/a) $ which is stable (elliptic) for $-1 < a < 3$. When $a_1=3$ this fixed point becomes hyperbolic: the eigenvalues  of the linearization of  the map at the fixed point bifurcate through $-1$ and become real.  At the same time a stable orbit of period two is ``born'' with $H_a(x_\pm,x_\mp)=(x_\mp,x_\pm)$, $x_\pm= (1\pm \sqrt{a-3})/a$. This orbit, in turn, becomes hyperbolic at $a_2=4$, giving birth to a period $4$ stable orbit. Generally, there  exists a sequence of parameter values $a_k$, at which the orbit of period $2^{k-1}$ turns unstable, while at the same time a stable orbit of period $2^k$ is born. The parameter values $a_k$ accumulate on some $a_\infty$. The crucial observation is that the accumulation rate
\begin{equation}
\lim_{k \rightarrow \infty}{a_k-a_{k-1} \over  a_{k+1}-a_k } = 8.721...
\end{equation} 
is universal for a large class of families, not necessarily H\'enon.

Furthermore, the $2^k$ periodic orbits scale asymptotically with two scaling parameters
\begin{equation}
\lambda=-0.249 \ldots,\quad \mu=0.061 \ldots
\end{equation}

To explain how orbits scale with $\lambda$ and $\mu$ we will follow \cite{Bou}. Consider an interval $(a_k,a_{k+1})$ of parameter values in a ``typical'' family $F_a$. For any value $\alpha \in (a_k,a_{k+1})$ the map $F_\alpha$ possesses a stable periodic orbit of period $2^{k}$. We fix some $\alpha_k$ within the interval $(a_k,a_{k+1})$ in some consistent way; for instance, by requiring that  $DF^{2^{k}}_{\alpha_k}$ at a point in the stable $2^{k}$-periodic orbit is conjugate, via a diffeomorphism $H_k$, to a rotation with some fixed rotation number $r$.  Let $p'_k$ be some unstable periodic point in the $2^{k-1}$-periodic orbit, and let $p_k$ be the further of the two stable $2^{k}$-periodic points that bifurcated from $p'_k$.  Denote with $d_k=|p'_k-p_k|$, the distance between $p_k$ and $p'_k$. The new elliptic point $p_k$ is surrounded by (infinitesimal) invariant ellipses; let $c_k$ be the distance between $p_k$ and $p'_k$ in t
 he direction of  the minor semi-axis of an invariant ellipse surrounding $p_k$, see Figure \ref{bifGeom}. Then,
$${1 \over \lambda}=-\lim_{k \rightarrow \infty}{ d_k \over d_{k+1}},\quad
{\lambda \over \mu}=-\lim_{k \rightarrow \infty}{ \rho_k \over \rho_{k+1}}, \quad
{1 \over \lambda^2}=\lim_{k \rightarrow \infty}{ c_k \over c_{k+1}},
$$
where $\rho_k$ is the ratio of the smaller and larger eigenvalues of $D H_k(p_k)$. 

\begin{figure}[t]
\psfrag{p_k}{$p_k$}
\psfrag{p'_k}{$p'_k$}
\psfrag{c_k}{$c_k$}
\psfrag{d_k}{$d_k$}
\begin{center}
\includegraphics[width=0.7 \textwidth]{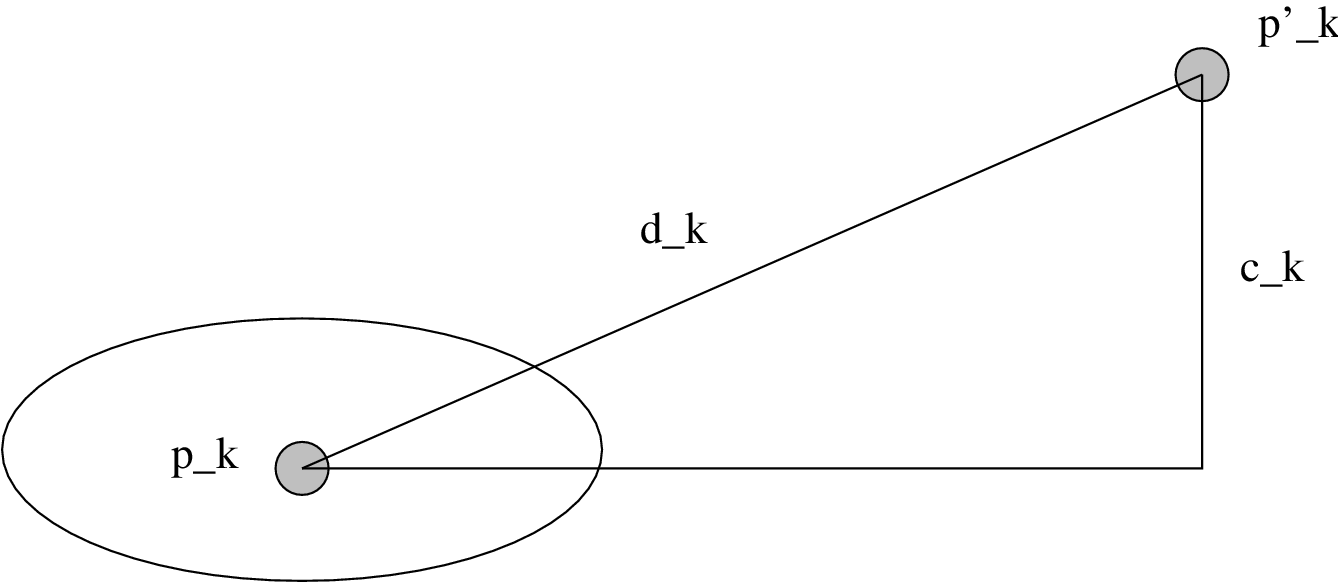}
\caption{The geometry of the period doubling. $p_k$ is the further elliptic point that has bifurcated from the hyperbolic point $p'_k$.}\label{bifGeom}
\end{center}
 \end{figure}

This universality can be explained rigorously if one shows that the {\it renormalization} operator
\begin{equation}\label{Ren}
R_{EKW}[F]=\Lambda^{-1}_F \circ F \circ F \circ \Lambda_F,
\end{equation}
where $\Lambda_F$ is some $F$-dependent coordinate transformation, has a fixed point, and the derivative of this operator is hyperbolic at this fixed point.

It has been argued in \cite{CEK2}  that $\Lambda_F$ is a diagonal linear transformation. Furthermore, such $\Lambda_F$ has been used in \cite{EKW1} and \cite{EKW2} in a computer assisted proof of existence of a reversible renormalization fixed point $F_{EKW}$ and hyperbolicity of the operator $R_{EKW}$.

We will now derive an equation for the generating function of the renormalized map $\Lambda_F^{-1} \circ F \circ F \circ \Lambda_F$.

Applying a reversible $F$ twice we get
$$
 \left({x'  \atop  -s(Z,x')} \right) {{ \mbox{{\small \it  F}} \atop \mapsto} \atop \phantom{\mbox{\tiny .}}} \left({Z \atop s(x',Z)} \right) = \left({Z  \atop  -s(y',Z)} \right) {{ \mbox{{\small \it  F}} \atop \mapsto} \atop \phantom{\mbox{\tiny .}}} \left({y'\atop  s(Z,y')} \right).
$$

According to \cite{CEK2}  $\Lambda_F$ can be chosen to be a linear diagonal transformation:  

$$\Lambda_F(x,u)=(\lambda x, \mu u).$$

We, therefore, set  $(x',y')=(\lambda x,  \lambda y)$, $Z(\lambda x, \lambda y)= z(x,y)$ to obtain:

\begin{equation}\label{doubling}
\left(\!{x  \atop  -{ 1 \over \mu } s(z,\lambda x)} \!\right) \!{{ \mbox{{\small $\Lambda_F$}} \atop \mapsto} \atop \phantom{\mbox{\tiny .}}} \!\left(\!{\lambda x  \atop  -s(z,\lambda x)} \!\right) \!{{ \mbox{{\small \it  F $ \circ$ F}} \atop \mapsto} \atop \phantom{\mbox{\tiny .}}}\!\left(\!{\lambda y \atop s(z,\lambda y)}\! \right)   {{ \mbox{{\small \it  $\Lambda_F^{-1}$}} \atop \mapsto} \atop \phantom{\mbox{\tiny .}}} \left(\!{y \atop {1 \over \mu } s(z,\lambda y) }\!\right),
\end{equation}
where $z(x,y)$ solves
\begin{equation}\label{midpoint}
s(\lambda x, z(x,y))+s(\lambda y, z(x,y))=0.
\end{equation}

If the solution of $(\ref{midpoint})$ is unique, then $z(x,y)=z(y,x)$, and it follows from $(\ref{doubling})$ that the generating function of the renormalized $F$ is given by 
\begin{equation}
\tilde{s}(x,y)=\mu^{-1} s(z(x,y),\lambda y).
\end{equation}

One can fix a set of normalization conditions for $\tilde{s}$ and $z$ which serve to determine scalings $\lambda$ and $\mu$ as functions of $s$. For example, the normalization $s(1,0)=0$ is reproduced for $\tilde{s}$ as long as $z(1,0)=z(0,1)=1.$ In particular, this implies that 
$$s(Z(\lambda,0),0)=0,$$
which serves as an equation for $\lambda$.  Furthermore, the condition $\partial_1 s(1,0)=1$ is reproduced as long as $\mu=\partial_1 z (1,0).$

We will now summarize the above discussion in the following definition of the renormalization operator acting on generating functions originally due  to the authors of \cite{EKW1} and \cite{EKW2}:

\begin{definition}\label{EKW_def}
Define the prerenormalization of $s$ as
\begin{eqnarray}\label{preren_eq}
\nonumber \\ {\cP}_{EKW}[s]=s \circ G[s],
\end{eqnarray}
where
\begin{eqnarray}
\label{midpoint_eq} 0&=&s(x, Z(x,y))+s(y, Z(x,y)),\\
\label{G_def} G[s](x,y)&=&(Z(x,y),y).
\end{eqnarray}

The renormalization of $s$ will be defined as
\begin{eqnarray}\label{ren_eq}
\nonumber \\ {\cR}_{EKW}[s]={1 \over \mu} {\cP}_{EKW}[s] \circ \lambda,
\end{eqnarray}
where
\begin{equation}
\nonumber \lambda(x,y)=(\lambda x, \lambda y), \quad \cP_{EKW}[s](\lambda,0)=0 \quad {\rm and} \quad \mu=\lambda \ \partial_1 \cP_{EKW}[s](\lambda,0).
\end{equation}

\end{definition}

\medskip

\begin{definition}\label{B_space}
The Banach space of functions  $s(x,y)=\sum_{i,j=0}^{\infty}c_{i j} (x-\beta)^i (y-\beta)^j$, analytic on a bi-disk
$$\cD_\rho(\beta)=\{(x,y) \in \field{C}^2: |x-\beta|<\rho, |y-\beta|<\rho\},$$
for which the norm
$$\|s\|_\rho=\sum_{i,j=0}^{\infty}|c_{i j}|\rho^{i+j}$$
is finite, will be referred to as $\cA^\beta(\rho)$.

$\cA_s^\beta(\rho)$ will denote its symmetric subspace $\{s\in\cA^\beta(\rho) : s_1(x,y)=s_1(y, x)\}$.

We will use the simplified notation $\cA(\rho)$  and  $\cA_s(\rho)$ for  $\cA^0(\rho)$ and $\cA_s^0(\rho)$, respectively. 
\end{definition}

\medskip

As we have already mentioned, the following has been proved with the help of a computer in \cite{EKW1} and \cite{EKW2}:
\begin{oldthm}\label{EKWTheorem}
There exist a polynomial $s_{0.5} \in \cA_s^{0.5}(\rho)$ and  a ball $\cB_\varrho(s_{0.5}) \subset \cA_s^{0.5}(\rho)$, $\varrho=6.0 \times 10^{-7}$, $\rho=1.6$, such that the operator ${\cR}_{EKW}$ is well-defined and analytic on $\cB_\varrho(s_{0.5})$. 

Furthermore, its derivative $D {\cR}_{EKW} \arrowvert_{\cB_\varrho(s_{0.5})}$ is a compact linear operator, and has exactly two eigenvalues 
$$\delta_1=8.721..., \quad {\rm and}$$
$$\delta_2={1\over \lambda_*}$$
\noindent of modulus larger than $1$, while 
$${\rm spec}(D {\cR}_{EKW} \arrowvert_{\cB_\varrho(s_{0.5})}) \setminus \{\delta_1,\delta_2 \} \subset \{z \in \C: |z| \le \nu \},$$ 
where 
\begin{equation}\label{contr_rate}
\hspace{3.0cm}\nu <  0.85.
\end{equation}


Finally, there is an $s^{EKW} \in \cB_\varrho(s_{0.5})$ such that
$$\cR_{EKW}[s^{EKW}]=s^{EKW}.$$
The scalings $\lambda_*$ and $\mu_*$ corresponding to the fixed point $s^{EKW}$ satisfy
\begin{eqnarray}
\label{lambda} \lambda_* \in [-0.24887681,-0.24887376], \\
\label{mu} \mu_* \in [0.061107811, 0.061112465].
\end{eqnarray}
 \end{oldthm}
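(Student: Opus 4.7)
The plan is a computer-assisted proof in the spirit of the original Eckmann--Koch--Wittwer work. First I would produce a high-precision numerical fixed point by iterating $\cR_{EKW}$ non-rigorously in a Taylor basis centered at $\beta = 0.5$, then round the coefficients to an explicit polynomial $s_{0.5}$ of moderate degree that is an approximate fixed point to many digits. The choice of radii $\rho = 1.6$ and $\varrho = 6.0\times 10^{-7}$ reflects the usual trade-off: $\rho$ must be large enough that the compositions defining $\cR_{EKW}$ remain well inside the bi-disk of analyticity of every $s \in \cB_\varrho(s_{0.5})$, while $\varrho$ must be small enough to close the Newton--Kantorovich loop.

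I would then implement $\cR_{EKW}$ rigorously on $\cA_s^{0.5}(\rho)$ using interval-arithmetic Taylor models, with the tail beyond truncation degree $N$ absorbed into a single majorant coefficient. The two nontrivial subroutines are (i) the implicit function $Z$ defined by $s(x,Z) + s(y,Z) = 0$, for which I would verify by a Krawczyk-style contraction that a small candidate enclosure is Newton-invariant uniformly over the ball, and (ii) the determination of $\lambda_*$ from $\cP_{EKW}[s](\lambda, 0) = 0$, again by rigorous Newton; the normalization $\mu = \lambda\, \partial_1 \cP_{EKW}[s](\lambda,0)$ then yields $\mu_*$, and the interval outputs provide the stated enclosures of $\lambda_*$ and $\mu_*$. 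Existence and uniqueness of $s^{EKW}$ in $\cB_\varrho(s_{0.5})$ would follow from a Newton--Kantorovich step: given a rigorous residual bound on $\|\cR_{EKW}[s_{0.5}] - s_{0.5}\|$ and a rigorous enclosure of $(A - I)^{-1}$ for an interval matrix $A$ enclosing $D\cR_{EKW}[s_{0.5}]$ (restricted to the appropriate complement of the expanding directions), one checks that a damped Newton map sends $\cB_\varrho(s_{0.5})$ into itself and contracts there.

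Compactness of $D\cR_{EKW}$ is a soft consequence of the fact that $\cR_{EKW}$ composes with $\lambda x$ and with $Z$, whose images sit strictly inside $\cD_\rho(0.5)$ because $|\lambda_*| < 1/4$; hence $D\cR_{EKW}$ factors through $\cA_s^{0.5}(\rho^+)$ for some $\rho^+ > \rho$, and the inclusion $\cA_s^{0.5}(\rho^+) \hookrightarrow \cA_s^{0.5}(\rho)$ is compact by Montel's theorem. For the spectral count I would split $D\cR_{EKW} = P_N D\cR_{EKW} P_N + K_N$, where $P_N$ is the projection onto monomials of total degree at most $N$. The finite matrix $P_N D\cR_{EKW} P_N$ is diagonalized rigorously (e.g.\ via interval Schur/Gershgorin) and shown to have exactly two eigenvalues of modulus larger than, say, $0.84$; these are identified with $\delta_1 \approx 8.72$ and with $1/\lambda_*$, the latter being forced because horizontal translation $s \mapsto (\partial_x + \partial_y) s$ generates an eigenvector of $D\cR_{EKW}$ with eigenvalue $1/\lambda_*$ via the chain rule applied to $\cR_{EKW}[s] = \mu^{-1}\, s \circ G \circ \lambda$. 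The remainder $K_N$ obeys an operator-norm bound of the form $C(\rho/\rho^+)^N$, which sinks below $0.85$ for $N$ sufficiently large; a standard perturbation-of-spectrum result for compact operators then confines the remainder of the spectrum of $D\cR_{EKW}$ to $\{|z| \le \nu\}$ with $\nu < 0.85$.

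The main obstacle is not conceptual but the rigor of the numerical bookkeeping. The Newton--Kantorovich radius depends on controlling $(D\cR_{EKW}[s_{0.5}] - I)^{-1}$ on the stable complement of the two expanding directions, and the sharpness of that resolvent feeds directly into the final $\nu$. The most delicate ingredient is the enclosure of the implicit function $Z$: its Lipschitz constant governs how the composition $s \circ G$ propagates the ball through $\cR_{EKW}$, and its over-enclosure under naive interval arithmetic amplifies quickly. Achieving the sharp tail bound $\nu < 0.85$, rather than merely proving hyperbolicity with an unspecified $\nu$, forces $N$ to be large enough to resolve all subleading contracting eigenvalues and requires the bi-disk to be finely subdivided to combat interval blow-up.
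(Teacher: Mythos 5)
Your proposal is sound, but note that the paper itself contains no proof of this statement: it is quoted verbatim from the computer-assisted proof of Eckmann, Koch and Wittwer (and its sharpening in \cite{Gai4}), whose strategy — an explicit approximate fixed-point polynomial, a rigorous interval/Taylor-model implementation of $\cR_{EKW}$ with controlled enclosures of $Z$, $\lambda$, $\mu$, a quasi-Newton contraction argument preconditioned on the expanding directions, compactness via analytic smoothing into a larger bi-disk, and a finite-rank projection plus geometric tail bound for the spectral count — is exactly what you describe, and is also the methodology the authors themselves use in Section 5 to prove their Main Theorem for $\cR_{c_0}$. So your route is essentially the same as the cited one, with only cosmetic imprecisions (e.g.\ the translation mode $(\partial_x+\partial_y)s$ is an exact eigenvector only of the fixed-scaling operator and acquires a rank-two correction from the normalization conditions, and the tail bound must treat high-degree columns via domain contraction and high-degree rows via analytic smoothing, not a single $(\rho/\rho^+)^N$ factor).
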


\medskip
 \begin{remark}
The bound $(\ref{contr_rate})$ is not sharp. In fact, a bound on the largest eigenvalue of $D {\cR}_{EKW}(s^{EKW})$, restricted to the tangent space of the stable manifold, is expected to be quite smaller.
 \end{remark}
\medskip

The size of the neighborhood in $\cA_s^\beta(\rho)$ where the operator $\cR_{EKW}$ is well-defined, analytic and compact has been improved in \cite{Gai4}. Here, we will cite a somewhat different version of the result of \cite{Gai4} which suits the present discussion  (in particular, in the Theorem below some parameter, like $\rho$ in $\cA_s^\beta(\rho)$, are different from those used in \cite{Gai4}). We would like to emphasize that all parameters and bounds used and reported in the Theorem below, and, indeed, throughout the paper, are numbers representable on the computer.

\begin{oldthm}\label{MyTheorem}$\phantom{aa}$\\
There exists a polynomial $s^0  \in \cA(\rho)$, $\rho=1.75$, such that the following holds.

\noindent {\it i)}  The operator $\cR_{EKW}$ is well-defined and analytic in $\cB_R(s^0) \subset \cA(\rho)$ with 
$$R=0.00426483154296875.$$

\medskip

\noindent {\it ii)} For all $s \in \cB_R(s^0)$ with real Taylor coefficients, the scalings  $\lambda=\lambda[s]$ and $\mu=\mu[s]$ satisfy
\begin{eqnarray}
\nonumber  0.0000253506004810333 \le & \mu & \le 0.121036529541016,\\
 \nonumber  -0.27569580078125 \le  & \lambda & \le -0.222587585449219.
\end{eqnarray}  

\medskip

\noindent {\it iii)} The operator $\cR_{EKW}$ is compact in $\cB_R(s^0) \subset \cA(\rho)$, with $\cR_{EKW}[s] \in \cA(\rho')$, $\rho'=1.0699996948242188 \rho$.
\end{oldthm}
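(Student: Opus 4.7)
The plan is to establish Theorem \ref{MyTheorem} as a computer-assisted extension of Theorem \ref{EKWTheorem}, working with rigorous interval bounds on a polynomial approximation $s^0$ to the fixed point $s^{EKW}$, now on the enlarged bi-disk $\cD_\rho(0)$ with $\rho=1.75$ (rather than the smaller $\cD_{1.6}(0.5)$ disk of the Eckmann--Koch--Wittwer statement). The underlying technical tool is that every $s \in \cB_R(s^0)$ admits the decomposition $s = s^0 + h$ with $\|h\|_\rho \le R$, so that $\ell^1$-weighted estimates on the finitely many Taylor coefficients of $s^0$, combined with the global estimate on $h$, deliver rigorous pointwise and derivative bounds throughout $\cD_\rho(0)$ by submultiplicativity of $\|\cdot\|_\rho$.

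For part \emph{i)}, the central analytic problem is to solve the implicit equation $(\ref{midpoint_eq})$ for $Z(x,y)$ on a bi-disk large enough for $G[s]\circ\lambda$ to remain in the domain of $s$. I would first use interval bounds on $s^0_2$ and the perturbation $h_2$ to verify that $s_2(x,Z)+s_2(y,Z)$ is uniformly bounded away from zero on an appropriate polydisk in $(x,y,Z)$ containing the relevant image. A quantitative implicit function theorem, realized as a contraction-mapping argument for a Newton-type operator in an appropriate subspace of $\cA(\rho)$, then produces an analytic solution $Z[s]$ depending analytically on $s$, and hence yields analyticity of $\cR_{EKW}$ on $\cB_R(s^0)$. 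For part \emph{ii)}, the scalings are determined by $s(Z(\lambda,0),0)=0$ and $\mu = \lambda\,\partial_1 \cP_{EKW}[s](\lambda,0)$; inserting the interval enclosures for $Z$ and its derivatives into these relations and applying an interval Newton/Krawczyk step around the values of $\lambda_*$ and $\mu_*$ furnished by Theorem \ref{EKWTheorem} yields the stated rigorous enclosures.

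For part \emph{iii)}, compactness of $\cR_{EKW}$ on $\cB_R(s^0)$ follows once one shows that the image already lies in the strictly smaller Banach space $\cA(\rho')$ with $\rho' = 1.0699996948242188\,\rho$, since the inclusion $\cA(\rho') \hookrightarrow \cA(\rho)$ is compact (a standard consequence of Montel's theorem through uniform bounds on a slightly larger polydisk). The verification reduces to two domain checks over $(x,y)\in\cD_{\rho'}(0)$: that $|\lambda y|\le\rho$, which is immediate from the bound $|\lambda|\le 0.27569580078125$ of part \emph{ii)}, and that the implicit solution $Z$ extends analytically throughout $\cD_{\rho'}(0)$ with $|Z(x,y)|\le\rho$, so that $s\circ G[s]\circ\lambda$ is well-defined there. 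The main obstacle is that the enclosure for $|Z|$ deteriorates as one enlarges the $(x,y)$-polydisk, so the ratio $\rho'/\rho$ cannot be pushed much above $1$; the delicate part of the proof is choosing the polynomial degree of $s^0$ together with the various truncation tolerances so that $R$, $\rho$, and $\rho'$ are simultaneously attainable at the stated values. All of these verifications are executed rigorously in interval arithmetic, following the framework developed in \cite{Gai4}.
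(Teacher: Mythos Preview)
The paper does not actually prove this theorem: it is stated as a citation from \cite{Gai4}, introduced with the words ``Here, we will cite a somewhat different version of the result of \cite{Gai4},'' and no proof is given in the present paper beyond the remark that the numerical parameters differ from those in the original reference. The theorem is used as an input, in the same way as Theorem~\ref{EKWTheorem} and Theorem~\ref{fp_properties} are quoted from \cite{EKW2} and \cite{GJ1} respectively (note the \texttt{oldthm} environment).

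That said, your outline is a reasonable description of how the computer-assisted proof in \cite{Gai4} proceeds: a contraction-mapping version of the implicit function theorem to solve the midpoint equation for $Z$, interval-arithmetic enclosures for $\lambda$ and $\mu$, and compactness via the analytic extension $\cR_{EKW}[s]\in\cA(\rho')$ with $\rho'>\rho$. One small correction: your domain check in part~\emph{iii)} is stated backwards. You write that one must check $|\lambda y|\le\rho$ for $(x,y)\in\cD_{\rho'}(0)$, but the actual requirement is that the composition $s\circ G[s]\circ\lambda$ be analytic on $\cD_{\rho'}$, i.e.\ that $(\lambda x,\lambda y)\in\cD_r$ for $(x,y)\in\cD_{\rho'}$, where $\cD_r$ is the domain on which $Z$ has been shown to exist (cf.\ Lemma~\ref{MC_lemma} and the inclusion $(\ref{inclusion})$ in the proof of Proposition~\ref{R_analyticity}). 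The constraint is therefore $|\lambda|\rho' \le r$, and it is the smallness of $r$ (governed by the range bound $\|Z\|_r\le\rho$), not of $\rho$, that limits how large $\rho'/\rho$ can be taken.
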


\medskip

\begin{definition}\label{Fdefr}
The set of reversible twist maps $F$ of the form (\ref{sdef})  with $s \in \cB_\varrho(\tilde{s}) \subset \cA_s^\beta(\rho)$  will be referred to as $\cF^{\beta,\rho}_\varrho(\tilde{s})$:
\begin{equation}\label{F-set}
\cF^{\beta,\rho}_\varrho(\tilde{s})=\left\{ F: (x,-s(y,x)) \mapsto (y,s(x,y))| \quad s \in \cB_\varrho(\tilde{s}) \subset \cA_s^{\beta}(\rho)\right\}.
\end{equation}

We will also use the notation 
$$\cF^{\rho}_\varrho(\tilde{s}) \equiv \cF^{0,\rho}_\varrho(\tilde{s}).$$
\end{definition}

\bigskip

We will finish our introduction into period-doubling for area-preserving maps  with  a summary of properties of the fixed point map. In \cite{GJ1} we have described the domain of analyticity of maps in some neighborhood of the fixed point. Additional properties of the domain are studied in \cite{J1}. Before we state the results of \cite{GJ1}, we will fix a notation for spaces of functions analytic on a subset of $\field{C}^2$.

\begin{definition}\label{tr_norms}
Denote $\cO_2(\cD)$ the Banach space of maps $F: \cD \mapsto \field{C}^2$, analytic on an open simply connected set $\cD \subset \field{C}^2$, continuous on $\partial \cD$, equipped with a finite max supremum norm $\| \cdot \|_\cD$:
$$\| F \|_{\cD}=\max\left\{\sup_{(x,u) \in \cD}|F_1(x,u)|, \sup_{(x,u) \in \cD}|F_2(x,u)| \right\}.$$

The Banach space of functions $y: \cA \mapsto \field{C}$, analytic on an open simply connected set $\cA \subset \field{C}^2$, continuous on $\partial \cA$, equipped with a finite supremum norm $\| \cdot \|_\cA$ will be denoted  $\cO_1(\cA)$:
$$\| y \|_{\cD}=\sup_{(x,u) \in \cD} |y(x,u)|.$$

If $\cD$ is a bidisk $\cD_{\rho} \subset \field{C}^2$ for some $\rho$, then we use the notation
$$\| \cdot \|_{\rho}\equiv\| \cdot  \|_{\cD_{\rho}}.$$

\end{definition}

The next Theorem describes the analyticity domains for maps in a neighborhood of the Eckmann-Koch-Wittwer fixed point map, and those for functions in a neighborhood of the Eckmann-Koch-Wittwer fixed point generating function. The Theorem has been proved in two different versions: one for the space $\cA^{0.5}_s(1.6)$ (the functional space in the original paper \cite{EKW2}), the other for the space $\cA_s(1.75)$ --- the space in which we will obtain a bound on  the renormalization spectral radius in the stable manifold in this paper. To state the Theorem  in a compact form, we introduce the following notation:
$$\rho_{0.5}=1.6, \quad \rho_0=1.75,$$
$$\varrho_{0.5}=6.0 \times 10^{-7}, \quad \varrho_0=5.79833984375 \times 10^{-4},$$
while $s_{0.5}$ (as in Theorem $\ref{EKWTheorem}$) and $s_0$ will denoted the approximate renormalization fixed points in spaces $\cA_s^{0.5}(1.6)$ and $\cA_s(1.75)$,  respectively.

\begin{oldthm}\label{fp_properties}
There exists a polynomial  $s_\beta$ such that  the following holds for all  $F \in  \cF^{\beta,\rho_\beta}_{\varrho_\beta}(s_\beta)$, $\beta=0.5$ or $\beta=0$.
\medskip

\noindent i)  There exists a simply connected open set $\cD=\cD(\beta,\varrho_\beta,\rho_\beta) \subset \fC^2$ such that the map $F$ is in $\cO_2(\cD)$.  

\medskip

\noindent ii)  There exist simply connected open sets $\bar{\cD}=\bar{\cD}(\beta,\varrho_\beta,\rho_\beta)  \subset \cD$, such that $\bar{\cD} \cap \R^2$ is a non-empty simply connected open set, and  such that for every $(x,u) \in \bar{\cD}$ and $s \in \cB_{\varrho_\beta}(s_\beta) \subset \cA_s^\beta(\rho_\beta)$, the equation 
\begin{equation}\label{y_equation}
0=u+s(y,x)
\end{equation}
has a unique solution $y[s](x,u) \in \cO_1(\bar{\cD})$. The map 
$$\cS: s \mapsto y[s]$$ 
is analytic as a map from $ \cB_{\varrho_\beta}(s_\beta)$ to $\cO_1(\bar{\cD})$.

Furthermore, for every $x \in \pi_x \bar{\cD}$, there is a function $U \in \cO_1(\cD_{\rho_\beta}(\beta))$, that satisfies
$$y[s](x,U(x,v))=v.$$
The map 
$$Y: y[s] \mapsto U$$
is analytic as a map from $\cO_1(\cD_{\rho_\beta}(\beta))$ to $\cB_{\varrho_\beta}(s_\beta)$.
\end{oldthm}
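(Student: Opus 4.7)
The strategy is to derive (i) from (ii): once $y[s](x,u)$ is constructed as an analytic function on $\bar{\cD}$, the map $F(x,u)=(y[s](x,u),\,s(x,y[s](x,u)))$ automatically lies in $\cO_2(\cD)$ for any simply connected $\cD\supset \bar{\cD}$ obtained by analytic continuation. For the construction of $y[s]$, I would fix a concrete polynomial approximation $y_0(x,u)$ to the solution of $u+s_\beta(y,x)=0$, then choose the domains $\bar{\cD}$ and $\cD$ so that on $\bar{\cD}$ the derivative $s_1(y_0(x,u),x)$ is bounded below in modulus and so that $y_0(x,u)$ lies comfortably inside the disk of analyticity $\{|y-\beta|<\rho_\beta\}$ of $s$. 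This is the step where computer-assisted interval arithmetic selects $\bar{\cD}$ explicitly.

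With this setup I would invoke the Newton operator
\begin{equation}
T_s[y](x,u)\;=\;y(x,u)\;-\;\frac{u+s(y(x,u),x)}{s_1(y(x,u),x)},
\end{equation}
viewed as a self-map of a closed ball $\cB_r(y_0)\subset\cO_1(\bar{\cD})$. Rigorous bounds on the residual $u+s_\beta(y_0(x,u),x)$, on the perturbation $(s-s_\beta)(y_0,x)$ (controlled by $\varrho_\beta$ together with Cauchy estimates in the norm on $\cA^\beta_s(\rho_\beta)$), and on $1/s_1$, would show that $T_s$ is a contraction for every $s\in\cB_{\varrho_\beta}(s_\beta)$. The unique fixed point defines $y[s]\in\cO_1(\bar{\cD})$. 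Analyticity of $\cS:s\mapsto y[s]$ then follows from the Banach-space implicit function theorem applied to $\Phi(s,y)(x,u)=u+s(y(x,u),x)$: the partial $D_y\Phi$ acts by multiplication by $s_1(y[s](\cdot),\cdot)$, which is invertible on $\cO_1(\bar{\cD})$ by the same lower bound on $|s_1|$ used for the contraction.

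For the second assertion about $U$, I would apply the analogous contraction / implicit function argument to the equation $v=y[s](x,U)$ in the unknown $U$, for each $x\in\pi_x\bar{\cD}$. Since $\partial_u y[s](x,u)=-1/s_1(y[s](x,u),x)$ is bounded and bounded away from zero, a Newton iteration for $U$ in $\cO_1(\cD_{\rho_\beta}(\beta))$ converges; however, one must additionally verify, via a quantitative Rouch\'e/open-mapping estimate comparing $y[s](x,\cdot)$ to its approximation, that the image actually contains the full bidisk $\cD_{\rho_\beta}(\beta)$ in the $v$ variable. Analyticity of $Y:y[s]\mapsto U$ is then a further application of the implicit function theorem, this time in the space $\cO_1(\cD_{\rho_\beta}(\beta))$.

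The main obstacle is entirely computational and bookkeeping: one must simultaneously tune $\bar{\cD}$, $r$, and the enclosing $\cD$ so that (a) the contraction constants for $T_s$ remain strictly less than $1$ uniformly on $\cB_{\varrho_\beta}(s_\beta)$, (b) the $x$-slices of $y[s](\bar{\cD})$ strictly cover $\cD_{\rho_\beta}(\beta)$ in $v$ so that the inverse $U$ has the claimed domain, and (c) every estimate closes up in rigorous interval arithmetic. The $\beta=0.5$, $\rho_{0.5}=1.6$ case can be read off from the verifications of \cite{EKW1,EKW2}, while the $\beta=0$, $\rho_0=1.75$ case requires the enlarged neighborhood of Theorem \ref{MyTheorem} and the lower bounds on $|\mu|$ and $|\lambda|$ recorded there, and it is precisely this re-verification on the bigger bidisk --- where $s$ is only controlled to within the looser radius $\varrho_0$ --- that absorbs most of the computer-assisted effort.
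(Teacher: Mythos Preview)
The paper does not actually prove this statement: it is labeled as an \texttt{oldthm} and is quoted from \cite{GJ1} (with the $\beta=0$ variant re-done with the parameters of \cite{Gai4}). So there is no in-paper proof to compare against beyond the citation.

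Your outline --- Newton/contraction iteration for $y[s]$ in $\cO_1(\bar\cD)$ with computer-verified lower bounds on $|s_1|$, followed by the Banach-space implicit function theorem for the analytic dependence $s\mapsto y[s]$, and a second such argument for $U$ --- is the standard and expected strategy for this kind of computer-assisted existence/analyticity statement, and is in the spirit of what \cite{GJ1} carries out. One minor correction: the $\beta=0.5$ case is not established in \cite{EKW1,EKW2} (those papers work with the generating function $s$, not the map $F$ and its domain $\cD$); both cases of Theorem~\ref{fp_properties} come from \cite{GJ1}. Also, your reduction of part~(i) to part~(ii) via ``analytic continuation to a larger $\cD\supset\bar\cD$'' is a bit glib: in practice the domain $\cD$ is constructed directly, by verifying on the computer that the implicit equation can be solved on a concrete polydisc-like set, rather than obtained a posteriori by continuation from $\bar\cD$.
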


\begin{remark}\label{mapI}
It is not too hard to see that the subsets $\cF^{\beta,\rho_\beta}_{\varrho_\beta}(s_\beta)$, $\beta=0$ or $0.5$, are  analytic Banach submanifolds of  the spaces $\cO_2(\cD(\beta,\varrho_\beta,\rho_\beta)$. Indeed, the map
\begin{eqnarray}
\cI: s \mapsto \left(y[s],s \circ h[s]\right),
\end{eqnarray}
where $y[s](x,u)$ is the solution of the equation $(\ref{y_equation})$,  and $h[s](x,u)=(x,y[s](x,u))$,  is analytic as a map from $\cB_{\varrho_\beta}(s_\beta)$ to $\cO_2(\cD(\beta,\varrho_{\beta},\rho_\beta)$ according to Theorem $\ref{fp_properties}$, and has an analytic inverse
\begin{eqnarray}
\cI^{-1}: F \mapsto \pi_u F \circ g[F],
\end{eqnarray}
where $g[F](x,y)=(x,U(x,y))$, and $U$ is as in Theorem $\ref{fp_properties}$. 

\end{remark}

\medskip

We are now ready to give a definition of the Eckmann-Koch-Wittwer renormalization operator for maps of the subset of a plane. Notice, that the condition $\cP_{EKW}[s](\lambda,0)=0$ from Definition $\ref{EKW_def}$ is equivalent to 
$$F(F(\lambda,-s(z(\lambda,0),\lambda)))=(0,0),$$
or, using the reversibility
$$\lambda=\pi_x F(F(0,0)).$$
On the other hand, 
$$-s(z(y(x,u),x),x)=-\cP_{EKW}[s](y(x,u),x)=u,$$
and
\begin{eqnarray}
\nonumber \partial_u \cP_{EKW}[s](y(x,u),x)&=&\cP_{EKW}[s]_1(y(x,u),x) y_2(x,u)\\
\nonumber &=&\cP_{EKW}[s]_1(y(x,u),x) \ \pi_x (F \circ F)_2(x,u) =-1,
\end{eqnarray}
then
$$\cP_{EKW}[s]_1(\lambda,0) \ \pi_x (F \circ F)_2(0,0) =-1,$$
and
$$\mu={-\lambda \over  \pi_x (F \circ F)_2(0,0)}.$$

\medskip

\begin{definition}
We will refer to the composition $F \circ F$ as the {\it prerenormalization} of $F$, whenever this composition is defined:
\begin{equation}
P_{EKW}[F]=F\circ F.
\end{equation}

Set
\begin{equation} \label{R_EKW_2}
\nonumber R_{EKW}[F]=\Lambda^{-1} \circ P_{EKW}[F] \circ  \Lambda, 
\end{equation}
where
$$\Lambda(x,u)=(\lambda x, \mu u), \quad \lambda=\pi_x P_{EKW}[F](0,0), \quad \mu={-\lambda \over  \pi_x P_{EKW}[F]_2(0,0)},$$
whenever  these operations are defined. $R_{EKW}[F]$ will be called the (EKW-)renormalization of $F$.
\end{definition}

\begin{remark}\label{remarkI}
Suppose that for some choice of $\beta$, $\varrho_\beta$ and $\rho_\beta$,  the operator $\cR_{EKW}$ and the map $\cI$, described  in Remark $\ref{mapI}$, are well-defined on some $\cB_{\varrho_\beta}(s_\beta) \subset \cA^\beta_s(\rho_\beta)$.  Also, suppose that the inverse of $\cI$ exists on $\cI(\cB_{\varrho_\beta}(s_\beta))$. Then, 
$$R_{EKW} = \cI \circ    \cR_{EKW}   \circ   \cI^{-1}$$
on $\bF$.
\end{remark}

\bigskip \section{Statement  of main results} \label{Results}

Consider the coordinate transformation 
\begin{equation}\label{coords}
 \nonumber S_t(x,u)=\left(x+t x^2,{u \over 1+2 t x}\right), \quad  S_t^{-1}(y,v)=\left({\sqrt{1+4t y}-1 \over 2 t}, v \sqrt{1+4 t y}  \right),
\end{equation}
for $t \in \C$, $|t|<4/(\rho+|\beta|)$ (recall Definition $\ref{B_space}$).

We will now introduce two renormalization operators, one -  on the generating functions, and one - on the maps, which incorporates the coordinate change $S_t$ as an additional coordinate transformation.

\begin{definition}\label{simple_strong_renorm}
Given $c \in \R$, set, formally,
$$\nonumber  \cP_c[s](x,y)=( 1+ 2 t_c y) s(G(\xi_{t_c}(x,y))), \quad {\rm and} \quad \cR_c[s]=\mu^{-1} \cP_c[s] \circ \lambda,$$
with $G$ is as in $(\ref{G_def})$, and
\begin{equation}
\xi_t(x,y)=(x+t x^2,y+t y^2), \quad  t_c[s]={1 \over 4} {c-\left(s \circ G\right)_{(0,3)} \over \left(s \circ G\right)_{(0,2)}},
\end{equation}
where  $\la$ and $\mu$ solve the following equations:
\begin{equation}
\cP_c[s](\lambda[s],0)=0,\quad  \mu[s] = \lambda[s] \partial_1 \cP_c[s](\lambda[s],0).
\end{equation}
\end{definition}

\begin{definition}\label{simple_strong_Frenorm}
Given $c \in \R$, set, formally,
\begin{equation}
\label{simple_Fren_eq}  P_c [F]= S_{t_c}^{-1} \circ F \circ F \circ S_{t_c}, \quad R_c[F]=\Lambda^{-1}_F \circ P_c[F] \circ \Lambda_F,
\end{equation}
where $S_{t_c}$  is as in $(\ref{coords})$, $\Lambda_F(x,u)=(\lambda[F] x, \mu[F] u)$, and 
$$t_c[F]\!=\!{1 \over 4} {c-\left(\pi_u (F  \circ F)\right)_{(0,3)} \over \left(\pi_u (F  \circ F)\right)_{(0,2)} },  \quad  \lambda[F]\!=\!\pi_x P_c[F](0,0),  \quad  \mu[F]\!=\!{-\lambda[F] \over  \pi_x P_c[F]_2(0,0)}.$$
\end{definition}

We are now ready to state our main theorem.  Below, and through the paper, $s_{(i,j)}$ stands for the $(i,j)$-th component of a Taylor series expansion of an analytic function of two variables. 

\bigskip 

\begin{mainthm}\label{Main_Theorem} (Existence and Spectral properties)
There exists a polynomial $s_0: \field{C}^2 \mapsto \field{C}$, such that 
\bigskip 
\begin{itemize}
\item[$i)$] The operators $\cR_{EKW}$ and $\cR_{c_0}$, where $c_0= \left(s_0 \circ G[s_0] \right)_{(0,3)}$,  are well-defined, analytic and compact in $\cB_{\varrho_0}(s_0) \subset \cA_s(\rho)$, with
$$\rho=1.75, \quad \varrho_0=5.79833984375 \times 10^{-4}.$$

\medskip 

\item[$ii)$] There exists a function $s^* \in \cB_r(s_0) \subset \cA_s(\rho)$ with
$$r=1.1 \times 10^{-10},$$
such that
$$\cR_{c_0}[s^*]=s^*.$$ 

\medskip 

\item[$iii)$] The linear operator  $D \cR_{c_0}[s^{*}]$  has two eigenvalues outside of the unit circle:
$$ 8.72021484375 \le \delta_1 \le 8.72216796875, \quad  \delta_2={1 \over \lambda_*},$$
where 
$$  -0.248875313689    \le \lambda_* \le  -0.248886108398438.$$

\medskip 

\item[$iv)$] The complement of these two eigenvalues in the spectrum is compactly contained in the unit disk:
$$
{\rm spec}(D \cR_{c_0}[s^*]) \setminus \{\delta_1,\delta_2\} \subset \{z \in \C: |z| \le 0.1258544921875 \equiv \nu\}.
$$

\end{itemize}
\end{mainthm}

\bigskip

The Main Theorem implies that there exist  codimension $2$ local stable manifolds $\cW_{\cR_{c_0}}(s^*) \subset \cA_s(1.75)$,  
such that the contraction rate in $\cW_{\cR_{c_0}}(s^*)$ is bounded from above by $\nu$:
$$\|\cR_{c_0}^n[s]-\cR_{c_0}^n[\tilde{s}]\|_\rho=O(\nu^n)$$
for any two $s$ and $\tilde{s}$ in $\cW_{\cR_{c_0}}(s^*)$.

\begin{definition}\label{Wdeff} $\phantom{aa}$\\

\noindent $i)$  The set of reversible twist maps of the form (\ref{sdef}) such that $s \in \cW_{\cR_{c_0}}(s^*) \subset \cA_s(1.75)$ will be denoted $W$, and referred to as  \underline{infinitely renormalizable maps}.

\bigskip

\noindent $ii)$ Set, $W_\varrho(s_0) \equiv  W \cap \cF^{1.75}_\varrho(s_0)$, where $\cF^{1.75}_\varrho(s_0)$ is as in  Definition $\ref{Fdefr}$.
\end{definition}

Naturally, these sets are  invariant under renormalization if $\varrho$ is sufficiently small.

\bigskip

Notice, that, among other things, this Theorem restates the result about existence of the Eckmann-Koch-Wittwer fixed point and renormalization hyperbolicity of Theorem $\ref{EKWTheorem}$ in a setting of a different functional space. We do not prove that the fixed point $s^*$, after an small adjustment corresponding to the coordinate change $S_t$,  coincides with $s^{EKW}$ from Theorem $\ref{EKWTheorem}$, although the computer bounds on these two  fixed points differ by a tiny  amount  on any bi-disk contained in the intersection of their domains.

The fact that the operator $R_{c_0}$ as in $(\ref{simple_Fren_eq})$ contains an additional coordinate change does not cause a problem: conceptually, period-doubling renormalization of a map is its  second iterate conjugated by a coordinate change,  which does not have to be necessarily linear.

\bigskip \section{Coordinate changes and renormalization eigenvalues} 

Let $\cD$ and $\bar{\cD}$ be as in the Theorem $\ref{fp_properties}$.  Consider the action of the operator 
\begin{equation} \label{R_star}
R_*[F]=\Lambda^{-1}_* \circ F \circ F \circ  \Lambda_*
\end{equation}
on $\cO_2(\cD)$, where
$$\Lambda_*(x,u)=(\lambda_*  x, \mu_* u),$$
with $\lambda_*$ and $\mu_*$ being the fixed scaling parameters corresponding to the Collet-Eckmann-Koch as in Theorem $\ref{EKWTheorem}$.

According to Theorem $\ref{EKWTheorem}$ this operator is analytic and compact on the subset $\cF^{0.5,1.6}_\varrho(s_{0.5})$, $\varrho=6.0 \times 10^{-7}$, of $\cO_2(\cD)$, and has a fixed point $F_{EKW}$.  In this paper, we will prove the existence of a fixed point $s^*$ of the operator $\cR_{EKW}$ in a Banach space different from that in  Theorem $\ref{EKWTheorem}$. Therefore, we will state most of our results concerning the spectra of renormalization operators for general spaces  $\cA^{\beta}_s(\rho)$ and sets $\aF$,  under the hypotheses of existence of a fixed point $s^*$, and analyticity and compactness of the operators in some neighborhood of the fixed point. Later, a specific choice of parameters $\beta$, $\rho$ and $\varrho$ will  be made,  and the hypotheses - verified.

Let $S=id+\sigma$ be a coordinate transformation of the domain $\cD$ of maps $F$, satisfying
$$D S  \circ F=D S.$$

In particular, these transformations preserve the subset of area-preserving maps.

Notice, that
\begin{eqnarray}
\nonumber (id +\epsilon \sigma)^{-1} \circ F \circ (id +\epsilon \sigma)
&=& F + \epsilon  \left( -\sigma \circ F + D F \cdot \sigma\right)+O(\epsilon^2) \\
\nonumber &\equiv& F+\epsilon h_{F,\sigma}+O(\epsilon^2).
\end{eqnarray}

Suppose that the operator $R_*$ has a fixed point $F^*$  in some neighborhood $\cB \subset \cO_2(\cD)$, on which $R_*$ is analytic and compact. 
Consider the action $D R_*[F] h_{F,\sigma}$ of the derivative of this operator. 
\begin{eqnarray}
\nonumber D R_*[F] h_{F,\sigma}&=& \partial_\epsilon \left(\Lambda_*^{-1} \circ (F+\epsilon h_\sigma)  \circ (F+\epsilon h_\sigma) \circ \Lambda_* \right) \arrowvert_{\epsilon=0}\\
\nonumber &=&\partial_\epsilon \left(\Lambda_*^{-1} \circ (id +\epsilon \sigma)^{-1} \circ F \circ F \circ (id +\epsilon \sigma \circ \Lambda_* \right) \arrowvert_{\epsilon=0} \\
\nonumber &=& \Lambda_*^{-1} \cdot \left[-\sigma  \circ F \circ F + D(F\circ F) \cdot \sigma \right] \circ \Lambda_* \\
\label{derivation} &=& \Lambda_*^{-1} \cdot h_{F \circ F,\sigma} \circ \Lambda_*.
\end{eqnarray}

Specifically, if $F=F^*$, one gets
$$ D R_*[F^*] h_{F^*,\sigma}=h_{F^*,  \tau}, \quad \tau=\Lambda_*^{-1} \cdot \sigma \circ \Lambda_*,$$
and clearly, $h_{F^*,\sigma}$ is an eigenvector, if $\tau=\kappa \sigma$, of eigenvalue $\kappa$. In particular, 
$$\kappa=\lambda_*^i \mu_*^j, \quad i \ge 0, \ j \ge 0$$ 
is an eigenvalue of multiplicity (at least) $2$ with eigenvectors $h_{F^*,\sigma}$ generated by
\begin{equation}\label{sigmas}
\sigma^1_{i,j}(x,u)=(x^{i+1} u^j,0), \quad  \sigma^2_{i,j}(x,u)=(0,x^i u^{j+1}),
\end{equation}
while
$$\kappa=\mu_*^j \lambda_*^{-1}, j \ge 0, \quad  {\rm and} \quad \kappa=\lambda_*^i \mu^{-1}_*, i \ge 0,$$ 
are each eigenvalues of multiplicity (at least) $1$, generated by 
\begin{equation}\label{sigmas_2}
\sigma^1_{-1,j}(x,u)=(u^j,0), \quad  {\rm and} \quad \sigma^2_{i,-1}(x,u)=(0,x^i),
\end{equation}
respectively.

Next, suppose $S_t^\sigma$, $S_0^\sigma=Id$, is a transformation of coordinates generated by a function $\sigma$ as in $(\ref{sigmas})$-$(\ref{sigmas_2})$, associated with an eigenvalue $\kappa$ of $D R_*[F^*]$. In addition to the operator $(\ref{R_star})$, consider
\begin{equation}\label{R_sigma}
R_\sigma[F]=\Lambda^{-1}_* \circ \left(S^\sigma_{t_\sigma[F]} \right)^{-1} \circ F \circ F \circ S_{t_\sigma[F]}^\sigma \circ  \Lambda_*.
\end{equation}
where the parameter $t_\sigma[F]$ is chosen as
\begin{equation}\label{t_def}
t_\sigma[F]=-{1 \over \kappa \|h_{F^*,\sigma}\|_\cD} \|E(\kappa)(R_*[F]-F^*)\|_{\cD},
\end{equation}
$E(\kappa)$ being the Riesz spectral projection associated with $\kappa$:
$$E(\kappa)={1 \over 2 \pi i} \int_\gamma (z-D R_*[F^*])^{-1} dz$$
($\gamma$  -  a Jordan contour that enclose only $\kappa$ in the spectrum of $D R_*[F^*]$).

We will now compare the spectra of the operators $R_*$ and $R_\sigma$. The result below should be interpreted as follows: if $h_{F^*,\sigma}$ is an eigenvector of $D R_*[F^*]$ generated by a coordinate change $id+\eps \sigma$, and associated with some eigenvalue $\kappa$, then this eigenvalue is eliminated from the spectrum of $D R_\sigma[F^*]$, {\it  if its multiplicity is $1$}.

\begin{lemma}\label{elimination_lemma}
Suppose, there exists a map $F^*$ in some $\cO_2(\cD)$, and a neighborhood $\cB(F^*) \subset \cO_2(\cD)$, such that the operators $R_*$ and $R_\sigma$ are analytic and compact as maps from $\cB(F^*)$ to $\cO_2(\cD)$, and $R_*[F^*]=R_\sigma[F^*]=F^*$.

 Then,
$${\rm spec}(D R_* [F^*])={\rm spec}(D R_\sigma [F^*]) \cup \{\kappa\}.$$

Moreover, if the multiplicity of $\kappa$ is $1$, then
$$
{\rm spec}(D R_* [F^*])\setminus {\rm spec}(D R_\sigma [F^*]) = \{\kappa\}.
$$

\end{lemma}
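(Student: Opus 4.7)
The plan is to recognize $R_\sigma$ as a conjugation of $R_*$ by a coordinate family that degenerates to the identity at the fixed point, compute $DR_\sigma[F^*]$ as a rank-one perturbation of $DR_*[F^*]$, and read off the spectral change via the Riesz projector. First, by inserting $\Lambda_*\circ \Lambda_*^{-1}$ between $S^\sigma_t$ and $F\circ F$ on both sides of $(\ref{R_sigma})$, one obtains
\begin{equation*}
R_\sigma[F] \;=\; T_t^{-1}\circ R_*[F]\circ T_t,\qquad T_t := \Lambda_*^{-1}\circ S_t^\sigma\circ \Lambda_*,\qquad t=t_\sigma[F].
\end{equation*}
Because $R_*[F^*]=F^*$, the definition $(\ref{t_def})$ gives $t_\sigma[F^*]=0$, so $T_0=\mathrm{id}$ and $R_\sigma[F^*]=F^*$, confirming the common fixed point.

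Next I would differentiate. Writing $R_\sigma[F]=\Phi(F,t_\sigma[F])$ with $\Phi(F,t)=T_t^{-1}\circ R_*[F]\circ T_t$, the chain rule gives
\begin{equation*}
DR_\sigma[F^*]\,h \;=\; DR_*[F^*]\,h \;+\; \partial_t\Phi(F^*,0)\cdot Dt_\sigma[F^*]\,h.
\end{equation*}
Expanding $T_t=\mathrm{id}+t\,\eta+O(t^2)$ with $\eta=\Lambda_*^{-1}\cdot \sigma\circ\Lambda_*$, the standard first-order conjugation identity yields $\partial_t\Phi(F^*,0)=h_{F^*,\eta}$. But $\eta$ is precisely the $\tau$ of the derivation $(\ref{derivation})$, and by the construction of the $\sigma$'s in $(\ref{sigmas})$--$(\ref{sigmas_2})$ we have $\eta=\kappa\sigma$; hence $\partial_t\Phi(F^*,0)=\kappa\,h_{F^*,\sigma}$. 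The correction is rank one and lies in the eigendirection of $\kappa$.

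To identify $Dt_\sigma[F^*]$, interpret $(\ref{t_def})$ as the signed coefficient of the $h_{F^*,\sigma}$-component of $E(\kappa)(R_*[F]-F^*)$ divided by $-\kappa$ (the norm on the right-hand side coincides with that coefficient in modulus precisely when the range of $E(\kappa)$ is one dimensional). In the multiplicity-one case $E(\kappa)v=\phi(v)\,h_{F^*,\sigma}$ for a continuous linear functional $\phi$, so $t_\sigma[F]=-\phi(R_*[F]-F^*)/\kappa$ and the chain of identifications above collapses to
\begin{equation*}
DR_\sigma[F^*] \;=\; \bigl(I - E(\kappa)\bigr)\,DR_*[F^*].
\end{equation*}
Since $E(\kappa)$ commutes with $DR_*[F^*]$, the Riesz splitting $\mathrm{range}(E(\kappa))\oplus\ker(E(\kappa))$ is invariant; the right-hand side vanishes on the first summand and equals $DR_*[F^*]$ on the second. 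Compactness of $DR_*[F^*]$ places $0$ in its spectrum automatically, so reading off spectra on each invariant summand gives $\mathrm{spec}(DR_\sigma[F^*])=\mathrm{spec}(DR_*[F^*])\setminus\{\kappa\}$.

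The main obstacles I anticipate are the two that surfaced above. First, one must justify the differentiation of $(\ref{t_def})$ despite the literal norm on its right-hand side, which requires passing to a signed projection coefficient as above. Second, treating the higher-multiplicity case requires more care: a rank-one correction cannot destroy an entire generalized $\kappa$-eigenspace of dimension greater than one, so in that case one only obtains the union statement $\mathrm{spec}(DR_*[F^*])=\mathrm{spec}(DR_\sigma[F^*])\cup\{\kappa\}$, by observing that the $h_{F^*,\sigma}$-direction is replaced by $0$ while the action of $DR_*[F^*]$ on a complementary invariant subspace within $\mathrm{range}(E(\kappa))$ survives, so no spectral values are created outside $\mathrm{spec}(DR_*[F^*])$.
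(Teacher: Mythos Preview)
Your approach is sound and, in the multiplicity-one case, cleaner than the paper's. The paper never rewrites $R_\sigma$ as a conjugation $T_t^{-1}\circ R_*\circ T_t$; instead it computes $DR_\sigma[F^*]h$ directly from the long composition $(\ref{R_sigma})$, obtaining the same rank-one correction you do, and then proceeds eigenvector by eigenvector: for an eigenvector $h$ of $DR_*[F^*]$ with eigenvalue $\delta\neq\kappa$ it checks $E(\kappa)E(\delta)=0$ so the correction vanishes, while for $h=h_{F^*,\sigma}$ the correction cancels $\kappa h_{F^*,\sigma}$ exactly. For the converse inclusion the paper explicitly solves for the constant $a$ making $h+ah_{F^*,\sigma}$ an eigenvector of $DR_*[F^*]$ whenever $h$ is one of $DR_\sigma[F^*]$ with $\delta\neq\kappa$. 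Your identification $DR_\sigma[F^*]=(I-E(\kappa))DR_*[F^*]$ and the use of the Riesz splitting subsume both directions at once and make the spectral conclusion immediate; this is genuinely more elegant. The paper's case-by-case argument, on the other hand, sidesteps the differentiability-of-the-norm issue you flag: it only ever evaluates $D_Ft_\sigma[F^*]h$ along eigendirections where $E(\kappa)E(\delta)h$ is either $0$ or $h_{F^*,\sigma}$ itself, so the norm computation is unambiguous.

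Your treatment of the general (higher multiplicity) union statement is the weaker part of your proposal. The paper handles it by the explicit converse construction $h\mapsto h+ah_{F^*,\sigma}$ with $a=\|E(\kappa)DR_*[F^*]h\|/(\|h_{F^*,\sigma}\|(\delta-\kappa))$, which shows directly that no eigenvalues $\delta\neq\kappa$ of $DR_\sigma[F^*]$ lie outside $\mathrm{spec}(DR_*[F^*])$; your remark about rank-one perturbations and complementary invariant subspaces gestures at this but does not supply the argument. Your formula $DR_\sigma[F^*]=(I-E(\kappa))DR_*[F^*]$ is only valid in the multiplicity-one case, so for the first assertion of the lemma you would still need something like the paper's direct eigenvector manipulation.
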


\begin{proof}

Since $D R_\sigma[F^*]$ and $D R_*[F^*]$ are both compact operators acting on an infinite-dimensional space, their spectra contain $\{0\}$.

Suppose $h$ is a eigenvector of $D R_*[F^*]$ corresponding to some eigenvalue $\delta$, then
\begin{eqnarray}\label{DR_action}
\nonumber D R_\sigma[F^*] h &=&  DR_*[F^*] h
\\
\nonumber &+&  \Lambda_*^{-1} \cdot \left( D_F \left( S_{t_\sigma[F^*]}^\sigma\right)^{-1}  h \right) \circ F^* \circ F^*  \circ  S_{t_\sigma[F^*]}^\sigma \circ \Lambda_* \\
\nonumber  &+&  \Lambda_*^{-1} \cdot \left[ D \left(\left(S_{t_\sigma[F^*]}^\sigma \right)^{-1}  \circ F^* \circ F^* \right)  \circ S_{t_\sigma[F^*]}^\sigma  \cdot \right. \\
\nonumber  &\phantom{+}&  \phantom{\Lambda_*^{-1}}   \cdot \left. \left( D_F S_{t_\sigma[F^*]}^\sigma h\right) \right] \circ \Lambda_*\\
 \nonumber &=& \delta h  +  \Lambda_*^{-1} \cdot \left( D_F \left(S^\sigma_{t_\sigma[F^*]}\right)^{-1} h \right) \circ \Lambda_* \circ  F^* \\
 &\phantom{=}& \phantom{\delta h} + \left[ D F^*  \cdot \Lambda_*^{-1} \cdot \left( D_F S_{t_\sigma[F^*]}^\sigma h \right) \right] \circ \Lambda_*
\end{eqnarray} 
(we have used the fact that $F^*$ satisfies the fixed point equation), where
$$ t_\sigma[F^*]\equiv 0 \quad {\rm and} \quad D_F S_{t_\sigma[F^*]}^\sigma h  \equiv  \partial_\epsilon \left[ S_{t_\sigma[{F^* +\epsilon h}]}^\sigma \right]_{\epsilon=0} =\left(D_F t_\sigma[F^*]h\right) \sigma.$$

More specifically,
\begin{eqnarray}
\nonumber t_\sigma[F^*+\epsilon h]&=& -{\kappa}^{-1} \|h_{F^*,\sigma}\|_\cD^{-1} \|  E(\kappa)\left( R_*(F^*+\epsilon h) -F^*  \right) \|_\cD\\
\nonumber &=&-\epsilon \kappa^{-1}  \|h_{F^*,\sigma}\|_\cD^{-1} \| E(\kappa)\left(D R_* [F^*] h \right)\|_\cD +O(\epsilon^2)\\
\nonumber &=&-\epsilon  \|h_{F^*,\sigma}\|_\cD^{-1} \kappa^{-1} \delta \|  \left( E(\kappa) h \right) \|_\cD +O(\epsilon^2),\\
\nonumber &=&-\epsilon  \|h_{F^*,\sigma}\|_\cD^{-1} \kappa^{-1} \delta \|  \left( E(\kappa) \left( E(\delta) h  \right) \right) \|_\cD +O(\epsilon^2),
\end{eqnarray}
and
\begin{equation}\label{t_der}
D_F t_\sigma[F^*]h=\partial_\epsilon \left[ t_\sigma[F^*+\epsilon h] \right]_{\epsilon=0}=- \|h_{F^*,\sigma}\|_\cD^{-1}  \kappa^{-1} \delta \| \left( E(\kappa) \left( E(\delta) h  \right) \right)  \|_\cD.
\end{equation}

If $\delta=\kappa$ and $h=h_{F^*,\sigma}$ then 
$$D_F t_\sigma[F^*]h=-1$$
(recall, that $E(\delta)^2=E(\delta)$) and
\begin{eqnarray}
\nonumber \Lambda_*^{-1} &\cdot& \left( D_F \left( S^\sigma_{t_\sigma[F^*]} \right)^{-1} h \right) \circ \Lambda_* \circ  F^* + D F^*  \cdot \Lambda_*^{-1} \cdot \left( D_F S_{t_\sigma[F^*]}^\sigma h \right) \circ \Lambda_*\\
\nonumber &=&- \left[ -\Lambda_*^{-1} \cdot \sigma \circ \Lambda_* \circ  F^* + D F^*  \cdot \Lambda_*^{-1} \cdot \sigma \circ \Lambda_*\right]\\
\nonumber &=&- \kappa \left[ - \sigma \circ  F^* + D F^*  \cdot \sigma\right]\\
\nonumber &=&-\kappa h_{F^*,\sigma},
\end{eqnarray}
therefore
$$ D R_\sigma[F^*] h_{F^*,\sigma}=0. $$ 

Now, suppose $h$ is an eigenvector of $D R_*[F^*]$ corresponding to the eigenvalue $\delta \ne \kappa$, hence, $h \ne h_{F^*,\sigma}$, then, since $E(\kappa) E(\delta)=0$, so is $D_F t_\sigma[F^*]h$, and $D_F S_{t_\sigma[F^*]}^\sigma h$. It follows from $(\ref{DR_action})$ that 
$$ D R_\sigma[F^*] h=\delta h.$$

Vice verse, suppose $h$ is an eigenvector of $D R_\sigma[F^*]$ corresponding to an eigenvalue $\delta \ne \kappa$, then,
$$D_F t_\sigma[F^*]h=-\kappa^{-1}\|h_{F^*,\sigma}\|_\cD^{-1}\|E(\kappa)DR_*[F^*]h\|_\cD,$$
and by (\ref{DR_action}) and a similar computation as above, for $a\in \R$,
\begin{eqnarray*}
DR_*[F^*](h+ah_{F^*,\sigma})
&  =&  a\kappa h_{F^*,\sigma}+DR_*[F^*]h \\
&  =&  a\kappa h_{F^*,\sigma}+\delta h-
\left(\Lambda_*^{-1} \cdot \left( D_F \left(S^\sigma_{t[F^*]}\right)^{-1} h \right)
\circ \Lambda_* \circ  F^* \right.\\
  &\phantom{=}&  \phantom{\delta h} \left.+ \left[ D F^*  \cdot \Lambda_*^{-1} \cdot
\left( D_F S_{t[F^*]}^\sigma h \right) \right] \circ \Lambda_* \right)\\
&  =&  a\kappa h_{F^*,\sigma}+\delta
h+\|h_{F^*,\sigma}\|_\cD^{-1}\|E(\kappa)DR_*[F^*]h\|_\cD h_{F^*,\sigma}.
\end{eqnarray*}
Let,
$$
a={{\|E(\kappa)DR_*[F^*]h\|_\cD} \over {\|h_{F^*,\sigma}\|_\cD(\delta-\kappa)}},
$$
then $h+a h_{F^*,\sigma}$ is an eigenvector of $DR_*[F^*]$ with eigenvalue $\delta$.

\end{proof}

\begin{lemma}\label{similar_spectra}

Suppose that there are $\beta$, $\varrho$, $\rho$, $\la_*$, $\mu_*$ and a function $s^* \in \cA^\beta_s(\rho)$  such that the operator $R_{EKW}$ is analytic and compact as maps from $\cF^{\beta,\rho}_\varrho(s^*)$  to $\cO_2(\cD)$, and 
$$R_{EKW}[F^*]=R_*[F^*]=F^*,$$
where $F^*$ is generated by $s^*$.

Then, there exists a neighborhood $\cB(F^*) \subset  \cF^{\beta,\rho}_\varrho(s^*)$, in which $R_*$ is analytic and compact, and
$${\rm spec}(D R_*[F^*]\arrowvert_{T_{F^*} \cB(F^*)} )= {\rm spec}(D R_{EKW}[F^*] \arrowvert_{T_{F^*} \cF^{\beta,\rho}_\varrho (s^*) } )\cup  \{1\}.$$
\end{lemma}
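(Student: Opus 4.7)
The plan is to exploit the factorization $R_{EKW}[F] = M_F^{-1}\circ R_*[F]\circ M_F$ where $M_F := \Lambda_*^{-1}\circ\Lambda_F$ is a diagonal linear transformation satisfying $M_{F^*} = \mathrm{id}$, together with the crucial observation that $R_{EKW}$ is \emph{invariant}---not merely equivariant---under the two-parameter group of diagonal linear conjugations $F\mapsto M^{-1}\circ F\circ M$.  This invariance follows from the identity $\Lambda_{M^{-1}FM} = \Lambda_F\circ M^{-1}$ (a direct consequence of the normalization conditions defining $\lambda[F]$ and $\mu[F]$) together with commutativity of $\Lambda_F$ with any diagonal $M$, yielding $R_{EKW}[M^{-1}FM] = R_{EKW}[F]$ by a short algebraic manipulation.

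First I would verify the factorization identity and use it, together with the hypothesis on $R_{EKW}$, to transfer analyticity and compactness to $R_*$ on a sub-neighborhood $\cB(F^*)\subset\cF^{\beta,\rho}_\varrho(s^*)$: since $F\mapsto M_F$ is analytic and conjugation by a near-identity diagonal linear map preserves compactness, $R_*$ inherits these properties.  Next I would linearize at $F^*$.  Writing $M_{F^*+\epsilon h} = \mathrm{id} + \epsilon\,\eta(h) + O(\epsilon^2)$ with
$$\eta(h) \;=\; (D_F\alpha[F^*]h)\,\sigma^1_{0,0} + (D_F\beta[F^*]h)\,\sigma^2_{0,0}, \qquad \alpha := \lambda[F]/\lambda_*,\ \beta := \mu[F]/\mu_*,$$
a straightforward chain rule computation in the style of equation $(\ref{derivation})$ yields $DR_{EKW}[F^*]h = DR_*[F^*]h + h_{F^*,\eta(h)}$, so that $K := DR_{EKW}[F^*] - DR_*[F^*]$ has rank at most $2$ with image contained in $E := \mathrm{span}\{h_{F^*,\sigma^1_{0,0}}, h_{F^*,\sigma^2_{0,0}}\}$.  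By the discussion preceding Lemma \ref{elimination_lemma}, $E$ lies inside the eigenspace of $DR_*[F^*]$ for $\kappa = 1$; and differentiating the invariance relation $R_{EKW}[M_t^{-1}F^*M_t] = F^*$ at $t=0$ along the one-parameter subgroups $M_t = \mathrm{diag}(1+t,1)$ and $M_t = \mathrm{diag}(1,1+t)$ (whose tangents at $F^*$ are $h_{F^*,\sigma^1_{0,0}}$ and $h_{F^*,\sigma^2_{0,0}}$ respectively) gives $DR_{EKW}[F^*]h = 0$ for every $h\in E$.

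In the decomposition $T_{F^*}\cB(F^*) = E\oplus V$, with $V$ the $DR_*[F^*]$-invariant complement to the generalized $\kappa=1$ eigenspace obtained via the Riesz projection (as in the proof of Lemma \ref{elimination_lemma}), the block forms become
$$DR_*[F^*] \;=\; \begin{pmatrix} I_E & 0 \\ 0 & A_V \end{pmatrix}, \qquad DR_{EKW}[F^*] \;=\; \begin{pmatrix} 0 & K_b \\ 0 & A_V \end{pmatrix},$$
and the upper-block-triangular structure of $DR_{EKW}[F^*]$ yields $\mathrm{spec}(DR_{EKW}[F^*]) = \{0\}\cup\mathrm{spec}(A_V) = \{0\}\cup(\mathrm{spec}(DR_*[F^*])\setminus\{1\})$.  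Since $0\in\mathrm{spec}(DR_*[F^*])$ (every compact operator on an infinite-dimensional Banach space contains $0$ in its spectrum), this rearranges to the claimed identity $\mathrm{spec}(DR_*[F^*]) = \mathrm{spec}(DR_{EKW}[F^*])\cup\{1\}$.

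The main obstacle is to confirm that the generalized eigenspace of $DR_*[F^*]$ at $\kappa = 1$ coincides with $E$, so that the block-triangular decomposition cleanly delivers the spectrum.  If that generalized eigenspace is strictly larger than $E$, one must instead adapt the eigenvector-chasing argument of Lemma \ref{elimination_lemma}: for each $\delta\in\mathrm{spec}(DR_*[F^*])\setminus\{1\}$ with eigenvector $h$, one constructs an eigenvector of $DR_{EKW}[F^*]$ at the same eigenvalue of the form $h + a\,h_{F^*,\sigma^1_{0,0}} + b\,h_{F^*,\sigma^2_{0,0}}$ by solving a $2\times 2$ linear system that is non-singular precisely because $\delta\ne 1$; conversely, the identity $K|_E = -I_E$, an immediate consequence of $DR_{EKW}[F^*]|_E = 0$, prevents spurious eigenvalues of $DR_{EKW}[F^*]$ from appearing outside $\mathrm{spec}(DR_*[F^*])$.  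This finite-dimensional verification, though routine, is the delicate step of the argument.
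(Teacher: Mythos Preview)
Your approach is essentially the same as the paper's, with two cosmetic differences worth noting. First, to obtain $DR_{EKW}[F^*]h_{F^*,\sigma^i_{0,0}}=0$ you invoke the invariance of $R_{EKW}$ under diagonal conjugation and differentiate $R_{EKW}[M_t^{-1}F^*M_t]=F^*$; the paper instead computes $D_F t_{EKW}[F^*]$ and $D_F r_{EKW}[F^*]$ on each $h_{F^*,\sigma^i_{0,0}}$ explicitly and checks the cancellation by hand. Your route is cleaner, though the paper's explicit values ($-1$ and $0$ in the appropriate slots) are what make the later eigenvector-chasing transparent, so the computations are not wasted. Second, you first attempt a block-triangular argument with respect to $E\oplus V$ and only fall back to eigenvector-chasing when the generalized $1$-eigenspace may exceed $E$; the paper skips the block picture entirely and goes straight to constructing, for each eigenvalue $\kappa\neq 0$ of $DR_*[F^*]$, an eigenvector of $DR_{EKW}[F^*]$ of the form $h+a\,h_{F^*,\sigma^1_{0,0}}+b\,h_{F^*,\sigma^2_{0,0}}$ (and conversely for $\kappa\neq 1$). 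Since your fallback is precisely that construction, and your linearization formula $DR_{EKW}[F^*]h=DR_*[F^*]h+h_{F^*,\eta(h)}$ is exactly the paper's equation relating the two derivatives, the substance of the two proofs coincides.
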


\begin{proof}
Let $\sigma^1_{0,0}$ and $\sigma^2_{0,0}$ be as in $(\ref{sigmas})$,  then 
\begin{eqnarray}
\nonumber S^{\sigma_{0,0}^1}_{\eps}(x,u)&=&((1+\eps)x,u), \quad h_{F,\sigma_{0,0}^1}=\pi_x F+DF\cdot (\pi_x,0), \\
\nonumber S^{\sigma_{0,0}^2}_{\eps}(x,u)&=&(x,(1+\eps)u), \quad  h_{F,\sigma_{0,0}^2}=\pi_u F+DF\cdot (0,\pi_u).
\end{eqnarray}

Now, notice, that the operator $R_{EKW}[F]$ can be written as 
$$R_{EKW}[F]=\Lambda_*^{-1} \circ \left( S^{\sigma_{0,0}^1}_{t_{EKW}[F]} \right)^{-1} \circ \left( S^{\sigma_{0,0}^2}_{r_{EKW}[F]} \right)^{-1} \circ F \circ F \circ  S^{\sigma_{0,0}^2}_{r_{EKW}[F]} \circ  S^{\sigma_{0,0}^1}_{t_{EKW}[F]} \circ \Lambda_*,$$
where 
$$t_{EKW}[F]={\pi_x F(F(0,0)) \over \lambda_*}-1,\quad r_{EKW}[F]={\pi_x F(F(0,0)) \over \mu_* \pi_x (F \circ F)_2(0,0)}-1={\lambda_* (1+t_{EKW}[F]) \over \mu_* \pi_x (F \circ F)_2(0,0)}-1,$$

Notice, that that $t_{EKW}[F]$, $r_{EKW}[F]$, and therefore the transformations $S^{\sigma_{0,0}^1}_{t_{EKW}[F]}$ and $S^{\sigma_{0,0}^2}_{r_{EKW}[F]}$, depend only on $P_{EKW}[F]$. Therefore, the maps $F \mapsto  S^{\sigma_{0,0}^1}_{t_{EKW}[F]}$ and $F \mapsto  S^{\sigma_{0,0}^2}_{r_{EKW}[F]}$ are analytic  (differentiable). In particular, by the continuity of $F \mapsto  S^{\sigma_{0,0}^1}_{t_{EKW}[F]}$ and $F \mapsto  S^{\sigma_{0,0}^2}_{r_{EKW}[F]}$, there exists a neighborhood $\cB(F^*) \subset \aF$, such that $R_*$ is compact in $\cB(F^*)$. In particular, both $D R_*[F^*]$ and $D  R_{EKW}[F^*]$ exist, and are compact linear operators.

For any $F \in \cB(F^*)$ and $h \in T_{F^*}\aF$,
\begin{eqnarray}
\nonumber D \!\!\!\!\!\!\!\!&\!\!\!\!\!\!\!\!\!\!&\!\!\!\!\!\!\!\! R_{EKW}[F] h=D R_*[F] h\\
\nonumber &+&\Lambda_*^{-1}  \left(D_F \left(S^{\sigma_{0,0}^1}_{t_{EKW}[F]} \right)^{-1} h\right) \circ  \left( S^{\sigma_{0,0}^2}_{r_{EKW}[F]} \right)^{-1} \circ F \circ F \circ  S^{\sigma_{0,0}^2}_{r_{EKW}[F]} \circ  S^{\sigma_{0,0}^1}_{t_{EKW}[F]} \circ  \Lambda_*\\
\nonumber &+&\Lambda_*^{-1} \left[D \left(\left( S^{\sigma_{0,0}^1}_{t_{EKW}[F]} \right)^{-1} \circ \left( S^{\sigma_{0,0}^2}_{r_{EKW}[F]} \right)^{-1} \circ F \circ F \circ  S^{\sigma_{0,0}^2}_{r_{EKW}[F]} \right) \cdot  \left(D_F S^{\sigma_{0,0}^1}_{t_{EKW}[F]}h\right)\right] \circ \Lambda_*\\
\nonumber  &+&\Lambda_*^{-1} \cdot  D \left( S^{\sigma_{0,0}^1}_{t_{EKW}[F]} \right)^{-1} \cdot \left(D_F \left( S^{\sigma_{0,0}^2}_{r_{EKW}[F]}\right)^{-1} h\right) \circ F \circ F \circ  S^{\sigma_{0,0}^2}_{r_{EKW}[F]} \circ  S^{\sigma_{0,0}^1}_{t_{EKW}[F]} \circ \Lambda_*\\
\nonumber &+&\Lambda_*^{-1}\left[ D \left(   \left( S^{\sigma_{0,0}^1}_{t_{EKW}[F]} \right)^{-1} \circ \left( S^{\sigma_{0,0}^2}_{r_{EKW}[F]} \right)^{-1} \circ F \circ F \right) \cdot \left(D_F S^{\sigma_{0,0}^2}_{r_{EKW}[F]} h\right)\right] \circ  S^{\sigma_{0,0}^1}_{t_{EKW}[F]} \circ   \Lambda_*\\
\nonumber  &=&D R_*[F] h-\left(D_F t_{EKW}[F] h \right) \Lambda_*^{-1} \circ \sigma_{0,0}^1 \circ  \left( S^{\sigma_{0,0}^2}_{r_{EKW}[F]} \right)^{-1} \circ  F \circ F \circ  S^{\sigma_{0,0}^2}_{r_{EKW}[F]} \circ  S^{\sigma_{0,0}^1}_{t_{EKW}[F]}  \circ \Lambda_*\\
\nonumber &+&\left(D_F t_{EKW}[F] h \right)  \Lambda_*^{-1} \left[D \left(\left( S^{\sigma_{0,0}^1}_{t_{EKW}[F]} \right)^{-1} \circ \left( S^{\sigma_{0,0}^2}_{r_{EKW}[F]} \right)^{-1} \circ  F \circ F \circ  S^{\sigma_{0,0}^2}_{r_{EKW}[F]}\right)  \circ \sigma_{0,0}^1 \right] \circ \Lambda_*\\
\nonumber  &-&\left(D_F r_{EKW}[F] h \right) \Lambda_*^{-1} \cdot D \left( S^{\sigma_{0,0}^1}_{t_{EKW}[F]} \right)^{-1}  \circ \sigma_{0,0}^2 \circ F \circ F \circ  S^{\sigma_{0,0}^2}_{r_{EKW}[F]} \circ  S^{\sigma_{0,0}^1}_{t_{EKW}[F]} \circ \Lambda_*\\
\nonumber &+&\left(D_F r_{EKW}[F] h \right)  \Lambda_*^{-1} \left[D \left( \left( S^{\sigma_{0,0}^1}_{t_{EKW}[F]} \right)^{-1} \circ \left( S^{\sigma_{0,0}^2}_{r_{EKW}[F]} \right)^{-1} \circ F \circ F \right)  \circ \sigma_{0,0}^2 \right]  \circ  S^{\sigma_{0,0}^1}_{t_{EKW}[F]} \circ \Lambda_*.
\end{eqnarray}

Specifically, if $F=F^*$, then (cf. $(\ref{derivation})$)
\begin{eqnarray}\label{R0REKW_eigenvectors}
\nonumber D R_{EKW}[F^*] h&=&D R_*[F^*] h+\left(D_F t_{EKW}[F^*] h \right) h_{F^*,\sigma_{0,0}^1}\\
&\phantom{=}& \phantom{ D R_*[F^*] h  } +\left(D_F r_{EKW}[F^*] h \right) h_{F^*,\sigma_{0,0}^2}.
\end{eqnarray}

Next,
\begin{eqnarray}
\nonumber D_F S^{\sigma_{0,0}^1}_{t_{EKW}[F]}h&=&(D_F  t_{EKW}[F] h \pi_x,0),\\
\nonumber D_F S^{\sigma_{0,0}^2}_{r_{EKW}[F]}h&=&(0,D_F  r_{EKW}[F] h \pi_u),\\
\nonumber D_F t_{EKW}[F] h&=&{\pi_x DP_{EKW}[F]h(0,0) \over \lambda_*},\\
\nonumber D_F r_{EKW}[F] h&=&{\lambda_* D_F t_{EKW}[F] h \over \mu_*  \pi_x (F \circ F)_2(0,0) } -{\lambda_* \pi_x \left(DP_{EKW}[F]h\right)_2(0,0) \over \mu_*  \left(\pi_x (F \circ F)_2(0,0) \right)^2}.  
\end{eqnarray}

If $h=h_{F^*,\sigma_{0,0}^1}$, then
\begin{eqnarray}
\nonumber DP_{EKW}[F]h(x,u)&=&\left(-\pi_x P_{EKW}[F](x,u)+\pi_x P_{EKW}[F]_1(x,u) x,\right.\\
\nonumber &\phantom{=}& \left. \phantom{-.}  \pi_u P_{EKW}[F]_1(x,u) x \right),\\
\nonumber \pi_x DP_{EKW}[F]h(0,0)&=&-\pi_x P_{EKW}[F](0,0)=-\lambda_*,\\
\nonumber D_F t_{EKW}[F] h&=&-1,\\
\nonumber D_F r_{EKW}[F] h&=& -{\lambda_* \over \mu_*  \pi_x (F \circ F)_2(0,0) } \\
\nonumber &\phantom{=}& -{\lambda_* \left( -\pi_x P_{EKW}[F]_2(0,0)+\pi_x P_{EKW}[F]_{1,2}(0,0) 0 \right) \over \mu_*  \left(\pi_x (F \circ F)_2(0,0) \right)^2}=0,\\
\nonumber  D_F S^{\sigma_{0,0}^1}_{t_{EKW}[F]}h&=&(-\pi_x,0),\\
\nonumber  D_F \left(S^{\sigma_{0,0}^1}_{t_{EKW}[F]}\right)^{-1}h&=&(\pi_x,0).
\end{eqnarray}


Similarly, if $h=h_{F^*,\sigma_{0,0}^2}$, then
\begin{eqnarray}
\nonumber DP_{EKW}[F]h(x,u)&=&\left(\pi_x P_{EKW}[F]_2(x,u) u,\right.\\
\nonumber &\phantom{=}&\!\!\! \left.-\pi_u P_{EKW}[F](x,u)+\pi_u P_{EKW}[F]_2(x,u) u \right),\\
\nonumber \pi_x DP_{EKW}[F]h(0,0)&=&0,\\
\nonumber  D_F t_{EKW}[F] h&=&0,\\
\nonumber  D_F r_{EKW}[F] h&=&-1,\\
\nonumber D_F S^{\sigma_{0,0}^2}_{r_{EKW}[F]}h&=&(0,-\pi_u),\\
\nonumber D_F \left(S^{\sigma_{0,0}^2}_{r_{EKW}[F]}\right)^{-1}h&=&(0,\pi_u).
\end{eqnarray}

Therefore, if $h=h_{F^*,\sigma_{0,0}^1}$, we get
\begin{eqnarray}
\nonumber D R_{EKW}[F^*] h\!\!&\!\!=\!\!&\!\! \Lambda_*^{-1} D P_{EKW}[F^*] h \circ \Lambda_*+\Lambda_*^{-1}  \pi_x F \circ F \circ \Lambda_*\\
\nonumber \!\!&\!\!\phantom{=}\!\!&\!\! \phantom{\Lambda_*^{-1} D P_{EKW}[F^*] h \circ \Lambda_*}+\Lambda_*^{-1} \left[D \left(F \circ F \right)  \cdot (-\pi_x,0) \right]\circ \Lambda_*\\
\nonumber  \!\!&\!\!\phantom{=}\!\!&\!\! \phantom{\Lambda_*^{-1} D P_{EKW}[F^*] h \circ \Lambda_*}  +\left( D_F r_{EKW}[F^*] h \right) h_{F^*,\sigma_{0,0}^2}.\\
\nonumber \!\!&\!\!=\!\!&\!\!\Lambda_*^{-1} \left[ D P_{EKW}[F^*] h +\pi_x P_{EKW}[F^*] \right. \\
\nonumber \!\!&\!\!\phantom{=}\!\!&\!\!\phantom{\Lambda_*^{-1} \left[ D P_{EKW}[F^*] h \right. } - \left. \left( \pi_x P_{EKW}[F]_1 \pi_x,  \pi_u P_{EKW}[F]_1 \pi_x\right)  \right]\circ \Lambda_*\\
\nonumber \!\!&\!\!\phantom{=}\!\!&\!\!\phantom{\Lambda_*^{-1}\left[ D P_{EKW}[F^*] h\right.} +0\\
\nonumber \!\!&\!\!=\!\!&\!\! 0.
\end{eqnarray}

If $h=h_{F^*,\sigma_{0,0}^2}$, then
\begin{eqnarray}
\nonumber D R_{EKW}[F^*]  h\!\!&\!\!=\!\!&\!\!\Lambda_*^{-1} D P_{EKW}[F^*] h \circ \Lambda_*+\Lambda_*^{-1}  \pi_u F \circ F \circ \Lambda_*\\
\nonumber \!\!&\!\!\phantom{=}\!\!&\!\! \phantom{\Lambda_*^{-1} D P_{EKW}[F^*] h \circ \Lambda_*} +\Lambda_*^{-1} \left[D \left(F \circ F \right)  \cdot (0,-\pi_u) \right]\circ \Lambda_*\\
\nonumber  \!\!&\!\!\phantom{=}\!\!& \!\!\phantom{\Lambda_*^{-1} D P_{EKW}[F^*] h \circ \Lambda_*}  +\left(D_F t_{EKW}[F^*] h \right) h_{F^*,\sigma_{0,0}^1}\\
\nonumber \!\!&\!\!=\!\!&\!\!\Lambda_*^{-1} \left[D P_{EKW}[F^*] h  + \pi_u P_{EKW}[F^*] \right.\\
\nonumber  \!\!&\!\!\phantom{=}\!\!&\!\!\phantom{L \Lambda_*^{-1} D P_{EKW}[F^*] h } -\left. \left( \pi_x P_{EKW}[F]_2 \pi_u,  \pi_u P_{EKW}[F]_2 \pi_u\right) \right] \circ \Lambda_*\\
\nonumber   \!\!&\!\!\phantom{=}\!\!&\!\!\phantom{L \Lambda_*^{-1} D P_{EKW}[F^*] h } +0\\
\nonumber \!\!&\!\!=\!\!&\!\! 0.
\end{eqnarray}

If $h$ is an eigenvector of $D R_*[F^*]$ associated with a non-zero eigenvalue $\kappa$, $h  \ne h_{F^*,\sigma_{0,0}^1}$, and  $h  \ne h_{F^*,\sigma_{0,0}^2}$, then for any constant $a$ and $b$
\begin{eqnarray}
\nonumber &&D R_{EKW}[F^*](h+a  h_{F^*,\sigma_{0,0}^1}+ b h_{F^*,\sigma_{0,0}^2} ) =\\
\nonumber &&=D R_*[F^*] h +a  h_{F^*,\sigma_{0,0}^1}+b  h_{F^*,\sigma_{0,0}^2}+\\
\nonumber  &&\phantom{=D R_*[F^*] h}+\left(D_F t_{EKW}[F^*] \left(h+ a  h_{F^*,\sigma_{0,0}^1}+ b h_{F^*,\sigma_{0,0}^2} \right) \right) h_{F^*,\sigma_{0,0}^1}\\
\nonumber  &&\phantom{=D R_*[F^*] h}+\left(D_F r_{EKW}[F^*] \left(h+ a  h_{F^*,\sigma_{0,0}^1}+ b h_{F^*,\sigma_{0,0}^2} \right) \right) h_{F^*,\sigma_{0,0}^2}\\
\nonumber &&=\kappa h +a  h_{F^*,\sigma_{0,0}^1}+b  h_{F^*,\sigma_{0,0}^2}+\\
\nonumber  &&\phantom{=\kappa h}+\left(D_F t_{EKW}[F^*] \left(h+ b h_{F^*,\sigma_{0,0}^2} \right) \right) h_{F^*,\sigma_{0,0}^1} -a h_{F^*,\sigma_{0,0}^1} \\
\nonumber  &&\phantom{=\kappa h}+\left(D_F r_{EKW}[F^*] \left(h+ a  h_{F^*,\sigma_{0,0}^1} \right) \right) h_{F^*,\sigma_{0,0}^2}- b h_{F^*,\sigma_{0,0}^2}\\
\nonumber &&=\kappa h +\kappa_1 h_{F^*,\sigma_{0,0}^1} +\kappa_2 h_{F^*,\sigma_{0,0}^2},
\end{eqnarray}
where 
$$\kappa_1[h]=D_F t_{EKW}[F^*]h, \quad \kappa_2[h]=D_F r_{EKW}[F^*]h,$$
and we see, that if $a[h]={\kappa_1 / \kappa}$ and $b[h]={\kappa_2 / \kappa}$, then 
$$h+a  h_{F^*,\sigma_{0,0}^1}+ b h_{F^*,\sigma_{0,0}^2}$$
is an eigenvector for $D R_{EKW}[F^*]$ with the eigenvalues $\kappa$. 

On the other hand, if $h$ is en eigenvector of  $D R_{EKW}[F^*]$ associated with the eigenvalue $\kappa \ne 1$, then 
$$h-a h_{F^*,\sigma_{0,0}^1}- b h_{F^*,\sigma_{0,0}^2}$$
is an eigenvector of $D R_*[F^*]$ associated with $\kappa$.

\end{proof}

\bigskip \section{Strong contraction on the stable manifold}

\begin{lemma}\label{lambda_lemma}
Suppose that $\beta$, $\varrho$ and $\rho$ are such that the operator 
$$\cR_*[s]={1 \over \mu_*} {\cP}_{EKW}[s] \circ \lambda_*$$
has a fixed point $s^* \in \cB_\varrho \subset \cA_s^{\beta}(\rho)$, and $\cR_*$ is analytic and compact as a map from $\cB_\varrho$  to $\cA_s^{\beta}(\rho)$. 

Then, the number $\lambda_*$ is an eigenvalue of $D \cR_*[s^*]$, and the eigenspace of $\lambda_*$ contains the eigenvector 
\begin{equation}\label{psi_eigenvector}
\psi_{s^*}(x,y)=s_1^*(x,y) x^2+s_2^*(x,y) y^2+2 s^*(x,y) y.
\end{equation}
\end{lemma}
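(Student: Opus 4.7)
The eigenvector $\psi_{s^*}$ arises as the infinitesimal generator of a one-parameter family of coordinate changes, namely the family $S_t$ from (\ref{coords}) introduced in the statement of the main results. The plan is:
(a) compute the generating function $\tilde s_t$ of the conjugated map $S_t^{-1}\circ F^*\circ S_t$;
(b) exploit the commutation $\Lambda_*^{-1}\circ S_t\circ\Lambda_*=S_{\lambda_* t}$ to derive a functional equation $\cR_*[\tilde s_t]=\tilde s_{\lambda_* t}$; and
(c) differentiate at $t=0$ to read off an eigenvalue relation for $D\cR_*[s^*]$.

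For step (a), a direct computation along the lines of the derivation of (\ref{doubling})--(\ref{midpoint}) shows that if $F^*$ is the reversible twist map generated by $s^*$, then $S_t^{-1}\circ F^*\circ S_t$ is again a reversible twist map, with generating function
\begin{equation*}
\tilde s_t(X,Y)=(1+2tY)\,s^*(X+tX^2,\,Y+tY^2).
\end{equation*}
One verifies that $\partial_1\tilde s_t(X,Y)=(1+2tX)(1+2tY)\,s^*_1(X+tX^2,Y+tY^2)$, which is symmetric in $(X,Y)$ thanks to $s^*_1(x,y)=s^*_1(y,x)$, so $\tilde s_t\in \cA_s^\beta(\rho')$ for some $\rho'<\rho$ once $|t|$ is small enough that the map $X\mapsto X+tX^2$ sends $\cD_{\rho'}(\beta)$ into $\cD_\rho(\beta)$. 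For step (b), the identity $\Lambda_*\circ S_{\lambda_* t}=S_t\circ \Lambda_*$ is an immediate check from the explicit formulas for $\Lambda_*$ and $S_t$. Combined with $R_*[F]=\Lambda_*^{-1}\circ F\circ F\circ\Lambda_*$ and $R_*[F^*]=F^*$, this yields
\begin{equation*}
R_*\!\left[S_t^{-1}\circ F^*\circ S_t\right]=S_{\lambda_* t}^{-1}\circ F^*\circ S_{\lambda_* t},
\end{equation*}
which at the level of generating functions translates, via Remark \ref{remarkI} and the hypothesis $\cR_*[s^*]=s^*$, into
\begin{equation*}
\cR_*[\tilde s_t]=\tilde s_{\lambda_* t}.
\end{equation*}

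For step (c), the map $t\mapsto \tilde s_t$ is analytic from a neighborhood of $0\in\C$ into $\cA_s^\beta(\rho')$, so the chain rule applies: differentiating the above identity at $t=0$ and computing the derivative of $\tilde s_t$ directly gives
\begin{equation*}
\left.\frac{d}{dt}\right|_{t=0}\!\tilde s_t(X,Y)=X^2 s_1^*(X,Y)+Y^2 s_2^*(X,Y)+2Y\,s^*(X,Y)=\psi_{s^*}(X,Y),
\end{equation*}
and
\begin{equation*}
D\cR_*[s^*]\,\psi_{s^*}=\left.\frac{d}{dt}\right|_{t=0}\!\tilde s_{\lambda_* t}=\lambda_*\,\psi_{s^*},
\end{equation*}
proving the lemma. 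The only delicate step is (a): one must confirm that the generating function of the conjugated map is really $\tilde s_t$ as claimed, which hinges on the fact that for $y=Y+tY^2$ in a small enough neighborhood of $\beta$ one has $\sqrt{1+4ty}=1+2tY$ unambiguously (the principal branch), and on checking that $\tilde s_t$ inherits the symmetry condition defining $\cA_s^\beta$; the rest is a short computation with the chain rule in a Banach space where $\cR_*$ is assumed analytic.
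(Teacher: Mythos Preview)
Your argument is correct and takes a genuinely different route from the paper's. The paper verifies $D\cR_*[s^*]\psi_{s^*}=\lambda_*\psi_{s^*}$ by a direct, purely generating-function computation: it differentiates the midpoint equation to obtain $DZ[s]\psi_s$, plugs this into the definition of $\cP_{EKW}$, and then performs a page of algebraic simplifications (using the midpoint equation and its partial derivatives) to collapse $D\cP_{EKW}[s]\psi_s$ into $\psi_{\cP_{EKW}[s]}$. Your approach bypasses all of this algebra via the finite-$t$ commutation $\Lambda_*\circ S_{\lambda_* t}=S_t\circ\Lambda_*$, which gives the clean identity $\cR_*[\tilde s_t]=\tilde s_{\lambda_* t}$ before differentiating. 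What your route buys is transparency and brevity; what the paper's route buys is that it never leaves the generating-function world and never needs the correspondence $\cI$ between maps and generating functions.

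That correspondence is the one place where your argument, as written, slightly overshoots the hypotheses: you invoke Remark~\ref{remarkI} to pass from the map-level identity $R_*[S_t^{-1}F^*S_t]=S_{\lambda_* t}^{-1}F^*S_{\lambda_* t}$ to the generating-function identity $\cR_*[\tilde s_t]=\tilde s_{\lambda_* t}$, but the lemma assumes only that $\cR_*$ is analytic and compact, not that $\cI$ is well defined on the relevant neighborhood. This is easily repaired without adding hypotheses: one checks directly that if $Z$ solves the midpoint equation for $s^*$ then the midpoint $\tilde Z$ for $\tilde s_t$ satisfies $\tilde Z+t\tilde Z^2=Z(x+tx^2,y+ty^2)$, whence $\cP_{EKW}[\tilde s_t](x,y)=(1+2ty)\,\cP_{EKW}[s^*](x+tx^2,y+ty^2)$; composing with $\lambda_*$ and dividing by $\mu_*$ then yields $\cR_*[\tilde s_t]=\tilde s_{\lambda_* t}$ by the fixed-point equation, entirely within $\cA_s^\beta$. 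With that two-line substitution your proof stands on its own.
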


\begin{proof}
Consider the coordinate transformation $\ref{coords})$, 
\begin{eqnarray}
 \nonumber S_\epsilon(x,u)&=&\left(x+\epsilon x^2,{u \over 1+2 \epsilon x}\right)\\
\label{coord} &=&(x,u) +\epsilon \sigma^1_{1,0}(x,u)-2 \epsilon \sigma^2_{1,0}(x,u)+O(\epsilon^2),\\
\label{inv_coord} S_\epsilon^{-1}(y,v)&=&\left({\sqrt{1+4\epsilon y}-1 \over 2 \epsilon}, v \sqrt{1+4 \epsilon y}  \right),
\end{eqnarray}
for real $\epsilon$, $|\epsilon|<4/(\rho+|\beta|)$ (recall Definition $\ref{B_space}$).

Let $s \in   \cA_s^\beta(\rho)$ be the generating function for some $F$, then the following demonstrates that  $S_\epsilon^{-1} \circ F \circ S_\epsilon$ is reversible, area-preserving and generated by
$$\hat{s}(x,y)=s(x+\epsilon x^2, y+ \epsilon y^2) (1+2 \epsilon y):$$ 

\begin{eqnarray}
\nonumber \left({x  \atop  -s(y+\epsilon y^2,x+\epsilon x^2)(1+2 \epsilon x)} \right)  &{{ \mbox{{\small \it  $S_\epsilon$}} \atop \mapsto} \atop \phantom{\mbox{\tiny .}}}&  \left({x+\epsilon x^2  \atop  -s(y+\epsilon y^2,x+\epsilon x^2)} \right)\\
\nonumber =\left({x'  \atop  -s(y',x')} \right) &  {{ \mbox{{\small \it  F}} \atop \mapsto} \atop \phantom{\mbox{\tiny .}}} & \left({y' \atop s(x',y') }\right)\\
\nonumber = \left({y+\epsilon y^2 \atop s(x+\epsilon x^2,y+\epsilon y^2) }\right) &  {{ \mbox{{\small \it  $S_\epsilon^{-1}$}} \atop \mapsto} \atop \phantom{\mbox{\tiny .}}} & \left({y \atop s(x+\epsilon x^2,y+\epsilon y^2) (1+2 \epsilon y) }\right).  
\end{eqnarray}

Next,
$$\hat{s}(x,y)=s(x,y)+\epsilon s_1(x,y) x^2+\epsilon s_2(x,y) y^2+\epsilon 2 s (x,y) y +O(\epsilon^2).$$
We will demonstrate that 
$$\psi_{s^*}(x,y)=s_1^{*}(x,y) x^2+s_2^{*}(x,y) y^2 +2 s^{*} (x,y) y.$$
is an eigenvector of $D \cR_*[s^{*}]$ of the eigenvalue $\lambda_*$. Notice, that 
$$\partial_1 \psi_s=\partial_1 \psi_s \circ I, \quad I(x,y)=(y,x),$$
and therefore, the function $s+\epsilon \psi_s \in \cA_s^\beta(\rho)$.

Consider the midpoint equation 
\begin{eqnarray}
\nonumber 0=O(\epsilon^2)\!&\!+\!&\!s(x,Z(x,y)+\epsilon DZ[s] \psi_s(x,y))+s(y,Z(x,y)+\epsilon DZ[s] \psi_s(x,y))\\
\nonumber \!&\!+\!&\! \epsilon \psi_s(x,Z(x,y))+\epsilon \psi_s(y,Z(x,y))
\end{eqnarray}
for the generating function $s+\epsilon \psi_s$. We get that
$$D Z[s] \psi_s(x,y)=- { \psi_s(x,Z(x,y))+\psi_s(y,Z(x,y))  \over  s_2(x,Z(x,y))+s_2(y,Z(x,y)) },$$
and
\begin{eqnarray}
\nonumber D \cP_{EKW} \psi_s(x,y)&=&s_1(Z(x,y),y) D Z[s] \psi_s(x,y) + \psi_s(Z(x,y),y)\\
\nonumber &=&- 2 s_1(Z(x,y),y) {s(x,Z(x,y)) Z +s(y,Z(x,y)) Z \over  s_2(x,Z(x,y))+s_2(y,Z(x,y)) }\\ 
\nonumber &\phantom{=}&-  s_1(Z(x,y),y) { s_2(x,Z(x,y)) Z(x,y)^2 +s_2(y,Z(x,y)) Z(x,y)^2 \over  s_2(x,Z(x,y))+s_2(y,Z(x,y)) }\\
\nonumber &\phantom{=}&+s_1(Z(x,y),y) Z(x,y)^2\\
\nonumber &\phantom{=}&- s_1(Z(x,y),y) { s_1(y,Z(x,y)) y^2  \over  s_2(x,Z(x,y))+s_2(y,Z(x,y)) } +s_2(Z(x,y),y) y^2\\
\nonumber &\phantom{=}&-s_1(Z(x,y),y) { s_1(x,Z(x,y)) x^2  \over  s_2(x,Z(x,y))+s_2(y,Z(x,y)) }\\
\nonumber &\phantom{=}&+2 s(Z(x,y),y) y 
\end{eqnarray} 

The terms on line 2 add up to zero (the numerator is equal to zero because of the midpoint equation), and so do those on lines 3 and 4.  We can also use the equalities 
\begin{eqnarray}
\nonumber s_2(x,Z(x,y))+s_2(y,Z(x,y))&=&-{ s_1(y,Z(x,y)) \over Z_2(x,y) }\\ 
\nonumber \partial_2 \cP_{EKW}[s](x,y)&=&s_2(Z(x,y),y) +s_1 (Z(x,y),y) Z_2(x,y)
\end{eqnarray} 
(the first one being the midpoint equation differentiated with respect to $y$) to reduce the 5-th line to 
$$ \partial_2 \cP_{EKW}[s](x,y) y^2.$$
The 6-th line reduces to 
$$ \partial_1 \cP_{EKW}[s](x,y) x^2$$ 
after we use the midpoint equation differentiated with respect to $x$:
$$ s_2(x,Z(x,y)+s_2(y,Z(x,y)=-{ s_1(x,Z(x,y)) \over Z_1(x,y) }.$$

To summarize,
\begin{eqnarray}
\nonumber D \cP_{EKW} \psi_s(x,y)&=&
\partial_1 \cP_{EKW}[s](x,y) x^2+ \partial_2 \cP_{EKW}[s](x,y) y^2+2\cP_{EKW}[s](x,y) y \\
\nonumber &=&\psi_{\cP_{EKW}[s]}(x,y).
\end{eqnarray}

Finally, we use the fact that
$$ \lambda_* \partial_i \cP_{EKW}[s](\lambda_* x, \lambda_* y)= \partial_i \left( {\cP}[s](\lambda_* x, \lambda_* y) \right)$$ 
to get
$$ D \cR_*[s^*] \psi_{s^*}=\lambda_* \psi_{s^*}.$$

\end{proof}

The Lemma below, whose elementary  proof we will omit, shows that $\lambda_*$ is also in the spectrum of $D R_*[F^*]$:

\begin{lemma}\label{same_spectra_new}
Suppose that $\beta$, $\varrho$ and $\rho$ are such that $s^* \in \cA_s^{\beta}(\rho)$ is a fixed point of $\cR_*$, and the operator $\cR_*$ is analytic and compact as a map from $\Bstar$  to $\cA_s^{\beta}(\rho)$.  Also, suppose that the map $\cI$, described in Remark $\ref{mapI}$, is well-defined and analytic on $\Bstar$, and that it has an  analytic inverse $\cI^{-1}$  on $\cI(\cB_\varrho(s^{*}))$. Then, 
$${\rm spec}\left(\left(D R_*[F^*]\right)\arrowvert_{T_{F^*} \aaF} \right) = {\rm spec } \left(D \cR_* [s^*] \right).$$
in particular,
$$\lambda_* \in  {\rm spec } \left(D R_* [F^*] \right).$$ 
\end{lemma}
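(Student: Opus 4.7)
The plan is to obtain both conclusions from a single observation: under the hypotheses, the map $\cI$ of Remark \ref{mapI} conjugates $\cR_*$ and $R_*$, so their derivatives at the respective fixed points are similar linear operators and therefore share spectra; the membership of $\lambda_*$ in $\mathrm{spec}(DR_*[F^*])$ then follows from Lemma \ref{lambda_lemma} as an immediate corollary.

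First I would verify the conjugation identity
\begin{equation*}
R_*[F] = \cI \circ \cR_*[s] \circ \cI^{-1}[F]
\end{equation*}
on the relevant neighborhood. This is the $\lambda_*,\mu_*$-fixed analogue of the relation $R_{EKW} = \cI \circ \cR_{EKW} \circ \cI^{-1}$ noted in Remark \ref{remarkI}; the derivation is identical since the argument only requires that $\Lambda_*$ is a linear diagonal transformation and not that the scalings are adjusted dynamically. Concretely, if $F$ is generated by $s$ via \eqref{sdef}, then $F\circ F$ is generated by $\cP_{EKW}[s]$ as in \eqref{doubling}--\eqref{midpoint}, and pre/post-composing with $\Lambda_* = (\lambda_* x, \mu_* u)$ exactly produces a reversible twist map whose generating function is $\mu_*^{-1}\cP_{EKW}[s]\circ \lambda_* = \cR_*[s]$. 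Under the passage $s\mapsto F$ via $\cI$, this says $R_*[\cI(s)] = \cI(\cR_*[s])$, which is the conjugation relation.

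Next, by hypothesis $\cI$ is analytic on $\Bstar$ with analytic inverse on $\cI(\Bstar)$, so $D\cI[s^*]:\cA_s^\beta(\rho)\to T_{F^*}\aaF$ is a bounded linear isomorphism with bounded inverse $D\cI^{-1}[F^*]$ (apply the chain rule to $\cI^{-1}\circ\cI = \mathrm{id}$ at $s^*$). Differentiating the conjugation identity at $s^*$, using $\cI(s^*) = F^*$ and $\cR_*[s^*] = s^*$, yields
\begin{equation*}
\bigl(DR_*[F^*]\bigr)\!\arrowvert_{T_{F^*}\aaF} \;=\; D\cI[s^*] \circ D\cR_*[s^*] \circ \bigl(D\cI[s^*]\bigr)^{-1}.
\end{equation*}
This is a similarity of bounded operators between Banach spaces, so the two spectra coincide.

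Finally, Lemma \ref{lambda_lemma} applied in the space $\cA_s^\beta(\rho)$ gives $\lambda_*\in \mathrm{spec}(D\cR_*[s^*])$ via the explicit eigenvector $\psi_{s^*}$ of \eqref{psi_eigenvector}, and the similarity above transfers this to $\lambda_*\in \mathrm{spec}\bigl((DR_*[F^*])\arrowvert_{T_{F^*}\aaF}\bigr)$, with corresponding eigenvector $D\cI[s^*]\psi_{s^*}$. The only mild point of care is bookkeeping at the level of tangent spaces: since $\aaF$ is an analytic Banach submanifold modeled on $\cA_s^\beta(\rho)$ via $\cI$, the identification $T_{F^*}\aaF \cong \cA_s^\beta(\rho)$ given by $D\cI[s^*]$ is canonical, and there is no other obstacle — this is why the authors call the proof elementary and omit it.
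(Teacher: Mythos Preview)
Your proposal is correct and is precisely the elementary argument the paper has in mind: the conjugation $R_* = \cI \circ \cR_* \circ \cI^{-1}$ (the fixed-scaling analogue of Remark~\ref{remarkI}) yields similarity of the derivatives at the fixed point, hence equality of spectra, and Lemma~\ref{lambda_lemma} then supplies $\lambda_*$. The paper omits the proof as elementary for exactly this reason.
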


At the same time, it is straightforward to see that  the spectra of $D R_{EKW}[F_{EKW}]\arrowvert_{T_{F_{EKW}} \aaF}$ and $D \cR_{EKW}[s^{EKW}]$ are identical.

\begin{lemma}\label{same_spectra}
Suppose that $\beta$, $\varrho$ and $\rho$ are such that $s^* \in \cA_s^{\beta}(\rho)$, and the operator $\cR_{EKW}$ is analytic and compact as a map from $\Bstar$  to $\cA_s^{\beta}(\rho)$.  Also, suppose that the map $\cI$, described in Remark $\ref{mapI}$, is well-defined and analytic on $\Bstar$, and that it has an  analytic inverse $\cI^{-1}$  on $\cI(\cB_\varrho(s^*))$. Then, 

$${\rm spec}\left(\left(D R_{EKW}[F^*]\right)\arrowvert_{T_{F^*} \aaF} \right) = {\rm spec } \left(D \cR_{EKW} [s^*] \right),$$
in particular,
$$\lambda_* \in  {\rm spec } \left(D \cR_{EKW} [s^*] \right).$$ 
\end{lemma}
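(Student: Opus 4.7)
The statement is structurally almost identical to Lemma \ref{same_spectra_new}, only with $\cR_{EKW}$ in place of $\cR_*$, so the plan is to mirror that argument by exploiting the conjugacy identity $R_{EKW} = \cI \circ \cR_{EKW} \circ \cI^{-1}$ from Remark \ref{remarkI}. The idea is: since $\cI$ is an analytic diffeomorphism between the set $\cB_\varrho(s^*) \subset \cA_s^\beta(\rho)$ and the Banach submanifold $\aaF \subset \cO_2(\cD)$ with analytic inverse (this is precisely the content of Remark \ref{mapI}), the derivatives of the two operators are conjugate by a bounded invertible linear isomorphism, which preserves the spectrum.

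Concretely, I would first verify that under the hypotheses ($\cR_{EKW}$ analytic and compact on $\cB_\varrho(s^*)$, $\cI$ analytic on $\cB_\varrho(s^*)$ with analytic inverse on $\cI(\cB_\varrho(s^*))$), the identity
\begin{equation*}
R_{EKW}[F] = \cI \circ \cR_{EKW} \circ \cI^{-1}[F]
\end{equation*}
holds for all $F \in \cI(\cB_\varrho(s^*)) = \aaF$. Then I would differentiate at $F^*=\cI(s^*)$ and $s^*=\cI^{-1}(F^*)$ using the chain rule, obtaining
\begin{equation*}
D R_{EKW}[F^*]\big|_{T_{F^*}\aaF} = D\cI[s^*] \circ D \cR_{EKW}[s^*] \circ \bigl(D\cI[s^*]\bigr)^{-1}.
\end{equation*}
Because $D\cI[s^*] : T_{s^*}\cB_\varrho(s^*) \to T_{F^*}\aaF$ is a bounded linear isomorphism (with bounded inverse $D\cI^{-1}[F^*]$ given by the formula in Remark \ref{mapI}), the two derivatives are similar as bounded linear operators on Banach spaces, and hence have the same spectrum. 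This yields the first part of the lemma.

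For the second assertion $\lambda_* \in \mathrm{spec}(D\cR_{EKW}[s^*])$, I would simply chain the previously established results: Lemma \ref{lambda_lemma} places $\lambda_*$ in $\mathrm{spec}(D\cR_*[s^*])$ (with the explicit eigenvector $\psi_{s^*}$); Lemma \ref{same_spectra_new} transfers this to $\mathrm{spec}(D R_*[F^*]\big|_{T_{F^*}\aaF})$; Lemma \ref{similar_spectra} then transfers it to $\mathrm{spec}(D R_{EKW}[F^*]\big|_{T_{F^*}\aaF})$, since $\lambda_* \ne 1$ and the symmetric difference between the two spectra consists only of $\{1\}$; and finally the spectral equality just proved transports $\lambda_*$ into $\mathrm{spec}(D\cR_{EKW}[s^*])$.

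\textbf{Main obstacle.} There is no real analytic difficulty here, as the lemma is essentially a bookkeeping result once one has Remark \ref{remarkI}. The one point that deserves care is ensuring the derivative $D\cI[s^*]$ is a genuine Banach space isomorphism between the correct tangent spaces (not merely a formal similarity), which requires invoking both the analyticity of $\cI^{-1}$ on $\cI(\cB_\varrho(s^*))$ from the hypothesis and the chain-rule identity $D\cI^{-1}[F^*]\circ D\cI[s^*] = \mathrm{Id}$ at the fixed point. Everything else is automatic from the fact that similar compact operators have identical spectra.
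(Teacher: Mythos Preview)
Your proposal is correct and matches the paper's own approach: the paper does not give an explicit proof of this lemma but prefaces it with ``it is straightforward to see that the spectra \ldots are identical,'' relying exactly on the conjugacy $R_{EKW}=\cI\circ\cR_{EKW}\circ\cI^{-1}$ from Remark~\ref{remarkI} (and the same conjugacy argument is implicit in the omitted ``elementary'' proof of Lemma~\ref{same_spectra_new}). Your chaining through Lemmas~\ref{lambda_lemma}, \ref{same_spectra_new}, and \ref{similar_spectra} for the ``in particular'' clause is also correct, since $\lambda_*\ne 1$ and the equality ${\rm spec}(DR_*[F^*])={\rm spec}(DR_{EKW}[F^*])\cup\{1\}$ forces any $\lambda_*\in{\rm spec}(DR_*[F^*])\setminus\{1\}$ to lie in ${\rm spec}(DR_{EKW}[F^*])$.
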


The convergence rate in the stable manifold of the renormalization operator plays a crucial role in demonstrating rigidity. It turns out that {\it the eigenvalue $\lambda_*$ is the largest eigenvalues in the stable subspace of $D R_{EKW}[F^*]$}, or equivalently $D \cR_{EKW}[s^{*}]$. However, it's value $|\lambda_*| \approx 0.2488$ is not small enough to ensure rigidity. At the same time, the eigenspace of the eigenvalue $\lambda_*$ is, in the terminology  of the renormalization theory, {\it irrelevant} to dynamics (the associated eigenvector is generated by a coordinate transformation). We, therefore, would like to  eliminate this eigenvalue via an appropriate coordinate change, as described above.

However, first we would like to identify the eigenvector corresponding to the eigenvalue $\lambda_*$ for the operator $\cR_{EKW}$. This vector turns out to be different from $\psi_{s^*}$.

\begin{lemma}\label{new_lambda_lemma}
Suppose that $\beta$, $\varrho$ and $\rho$ are such that the operator 
$\cR_{EKW}$ has a fixed point $s^* \in \cA_s^{\beta}(\rho)$, and $\cR_{EKW}$ is analytic and compact as a map from $\Bstar$  to $\cA_s^{\beta}(\rho)$.  Also, suppose that the map $\cI$, described in Remark $\ref{mapI}$, is well-defined and analytic on $\Bstar$, and that it has an  analytic inverse $\cI^{-1}$  on $\cI(\cB_\varrho(s^*))$.

Then, the number $\lambda_*$ is an eigenvalue of $D \cR_{EKW}[s^*]$, and the eigenspace of $\lambda_*$ contains the eigenvector 
\begin{equation}\label{new_psi_eigenvector}
\psi^{EKW}_{s^*}(x,y)=\psi_{s^*}+\tilde{\psi},
\end{equation}
where
$$\tilde{\psi}=s^*-(s_1^*(x,y)x+s_2^*(x,y) y).$$
\end{lemma}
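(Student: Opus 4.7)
The plan is to verify $D\cR_{EKW}[s^*]\psi^{EKW}_{s^*} = \lambda_*\,\psi^{EKW}_{s^*}$ by a direct computation paralleling the proof of Lemma~\ref{lambda_lemma}. The new ingredient compared with $\cR_*$ is the $s$-dependence of the scalings $\lambda[s]$, $\mu[s]$ inside $\cR_{EKW}$; this produces explicit correction terms in $D\cR_{EKW}[s^*]$, and the role of the additive piece $\tilde\psi$ is precisely to absorb them.

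First I would differentiate $\cR_{EKW}[s] = \mu[s]^{-1}\,\cP_{EKW}[s]\circ\lambda[s]$ at $s = s^*$ in a direction $h$, and use the fixed-point identities $\cP_{EKW}[s^*](\lambda_* x, \lambda_* y) = \mu_* s^*(x,y)$ and $\partial_i\cP_{EKW}[s^*](\lambda_* x, \lambda_* y) = (\mu_*/\lambda_*)\,s_i^*(x,y)$ to obtain the decomposition
\[
 D\cR_{EKW}[s^*]h \;=\; D\cR_*[s^*]h \;-\; \frac{D\mu[s^*]h}{\mu_*}\, s^* \;+\; \frac{D\lambda[s^*]h}{\lambda_*}\,(s_1^*x + s_2^*y).
\]
Implicit differentiation of the defining equations $\cP_{EKW}[s](\lambda[s],0) = 0$ and $\mu[s] = \lambda[s]\,\partial_1\cP_{EKW}[s](\lambda[s],0)$ then expresses $D\lambda[s^*]h$ and $D\mu[s^*]h$ in terms of $D\cP_{EKW}[s^*]h(\lambda_*,0)$ and $\partial_1 D\cP_{EKW}[s^*]h(\lambda_*,0)$.

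Next I would apply this identity separately to $h = \psi_{s^*}$ and to $h = \tilde\psi$. For $\psi_{s^*}$, Lemma~\ref{lambda_lemma} supplies $D\cR_*[s^*]\psi_{s^*} = \lambda_*\psi_{s^*}$ together with the identity $D\cP_{EKW}[s]\psi_s = \psi_{\cP_{EKW}[s]}$, so the correction coefficients reduce to evaluating $\psi_{\cP_{EKW}[s^*]}$ and its $x$-derivative at $(\lambda_*,0)$, which is explicit by the normalizations $s^*(1,0) = 0$ and $s_1^*(1,0) = 1$. For $\tilde\psi$ I would first establish the companion identity $D\cP_{EKW}[s]\tilde\psi_s = \tilde\psi_{\cP_{EKW}[s]}$, where $\tilde\psi_f(x,y) := f(x,y) - f_1(x,y) x - f_2(x,y) y$, by the same style of midpoint-equation perturbation used in Lemma~\ref{lambda_lemma}: perturbing $s$ in direction $\tilde\psi_s$ and linearising the equation $s(x,Z)+s(y,Z)=0$ yields the induced shift $\delta Z = Z - Z_1 x - Z_2 y$, and the claimed identity then follows after a short calculation using the chain-rule expressions $\partial_1\cP_{EKW}[s] = s_1(Z,y)Z_1$ and $\partial_2\cP_{EKW}[s] = s_1(Z,y)Z_2 + s_2(Z,y)$. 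This yields in particular $D\cR_*[s^*]\tilde\psi = \tilde\psi$ and supplies the remaining correction coefficients.

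Summing the two contributions and collecting terms produces the eigenvalue equation. The main technical obstacle is the computation of $D\mu[s^*]h$, because the defining equation for $\mu[s]$ brings in both $\partial_{11}\cP_{EKW}[s^*](\lambda_*, 0) = (\mu_*/\lambda_*^2)\,s_{11}^*(1,0)$ and $\partial_1 D\cP_{EKW}[s^*]h(\lambda_*,0)$, which likewise carries an $s_{11}^*(1,0)$-term; these two channels must be shown to cancel in the final sum. It is exactly this cancellation, together with the normalization data at $(1,0)$, that selects the specific combination $\tilde\psi = s^* - (s_1^* x + s_2^* y)$ as the right compensator and pins down the eigenvector $\psi^{EKW}_{s^*}$.
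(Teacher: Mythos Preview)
Your approach is correct and genuinely different from the paper's. The paper does not compute on the generating-function side at all: it transports the problem to the map side via the conjugacy $\cI$, invokes the decomposition $DR_{EKW}[F^*]h = DR_*[F^*]h + (D_F t_{EKW}[F^*]h)\,h_{F^*,\sigma^1_{0,0}} + (D_F r_{EKW}[F^*]h)\,h_{F^*,\sigma^2_{0,0}}$ established earlier in Lemma~\ref{similar_spectra}, and then evaluates the two coefficients $D_F t_{EKW}$, $D_F r_{EKW}$ on $h_{F^*,\sigma^1_{1,0}-2\sigma^2_{1,0}}$ by direct differentiation of $P_{EKW}[F]$. Your decomposition $D\cR_{EKW}[s^*]h = D\cR_*[s^*]h - (D\mu[s^*]h/\mu_*)\,s^* + (D\lambda[s^*]h/\lambda_*)\,(s_1^* x + s_2^* y)$ is exactly the generating-function analogue of that formula, but you obtain the coefficients by implicit differentiation of the normalization conditions rather than by passing to maps. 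Your companion identity $D\cP_{EKW}[s]\tilde\psi_s = \tilde\psi_{\cP_{EKW}[s]}$ (which follows immediately from the homogeneity of the midpoint equation under the rescalings $s\mapsto(1+\epsilon)s$ and $(x,y)\mapsto(1+\epsilon)(x,y)$) is not stated in the paper but is the correct replacement for the paper's observation that $\psi_u$ and $\psi_x$ are the generating-function counterparts of $h_{F^*,\sigma^2_{0,0}}$ and $h_{F^*,\sigma^1_{0,0}}$. Your route is more self-contained --- it never uses the map $\cI$ or Lemma~\ref{similar_spectra}, so the hypothesis on $\cI$ becomes superfluous --- while the paper's route recycles machinery already built. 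The cancellation of the $s_{11}^*(1,0)$ terms that you flag as the main obstacle does indeed occur, for both $h=\psi_{s^*}$ and $h=\tilde\psi$, exactly because the contribution from $\partial_1(D\cP_{EKW}[s^*]h)(\lambda_*,0)$ and the contribution from $\partial_{11}\cP_{EKW}[s^*](\lambda_*,0)\,D\lambda[s^*]h$ carry the same factor $s_{11}^*(1,0)$ with opposite signs.
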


\begin{proof}
Notice, that $\tilde{\psi}$ is of the form
$$\tilde{\psi}(x,y)=\psi_u-\psi_x,$$
where 
$$\psi_x(x,y)=s^*_1(x,y) x+s^*_2(x,y) y$$ 
is the eigenvector of $D \cR_*[s^*]$ corresponding to the rescaling of the variables $x$ and $y$, while 
$$\psi_u(x,y)=s^*(x,y)$$ 
is the eigenvector corresponding to the rescaling of $s$. $\psi_x(x,y)$ and $\psi_u(x,y)$ correspond to the eigenvectors $h_{F^*,\sigma^1_{0,0}}$ and $h_{F^*,\sigma^2_{0,0}}$, respectively, of $D R_{0}[F^*]$. 

Recall, that $h_{F^*,\sigma^1_{0,0}}$ and $h_{F^*,\sigma^2_{0,0}}$ are eigenvectors of $D R_{0}[F^*]$, with eigenvalue $1$, and eigenvectors of $D R_{EKW}[F^*]$ with eigenvalue $0$. 

By Lemma \ref{lambda_lemma} $\psi_{s^*}$ is an eigenvector of $D\cR_*$, the corresponding eigenvector of $DR_*$ is $h_{F^*,\sigma^1_{1,0}-2\sigma^2_{1,0}}$. Thus, $\psi_{s^*}+\tilde \psi$ corresponds to the vector 
\begin{equation}
h_{\lambda_*}^{EKW} := h_{F^*,\sigma^1_{1,0}-2\sigma^2_{1,0}} - h_{F^*,\sigma^1_{0,0}}+ h_{F^*,\sigma^2_{0,0}}.
\end{equation}
 
To finish the proof, it suffices to prove that $$DR_{EKW}h_{\lambda_*}^{EKW} = \lambda_* h_{\lambda_*}^{EKW}.$$

By (\ref{R0REKW_eigenvectors})
\begin{eqnarray}
\nonumber D R_{EKW}[F^*] h_{\lambda_*}^{EKW} &=& D R_{EKW}[F^*] h_{F^*,\sigma^1_{1,0}-2\sigma^2_{1,0}} \\
\nonumber & = &D R_*[F^*] h_{F^*,\sigma^1_{1,0}-2\sigma^2_{1,0}} \\
\nonumber &+& \left(D_F t_{EKW}[F^*] h_{F^*,\sigma^1_{1,0}-2\sigma^2_{1,0}}\right) h_{F^*,\sigma_{0,0}^1} \\
\nonumber &+& \left(D_F r_{EKW}[F^*] h_{F^*,\sigma^1_{1,0}-2\sigma^2_{1,0}} \right) h_{F^*,\sigma_{0,0}^2} \\
\nonumber & = &\lambda_* h_{F^*,\sigma^1_{1,0}-2\sigma^2_{1,0}} \\
\nonumber &+& \left(D_F t_{EKW}[F^*] h_{F^*,\sigma^1_{1,0}-2\sigma^2_{1,0}}\right) h_{F^*,\sigma_{0,0}^1} \\
\nonumber &+& \left(D_F r_{EKW}[F^*] h_{F^*,\sigma^1_{1,0}-2\sigma^2_{1,0}} \right) h_{F^*,\sigma_{0,0}^2}
\end{eqnarray}
The result follows if $$D_F t_{EKW}[F^*] h_{F^*,\sigma^1_{1,0}-2\sigma^2_{1,0}}=-\lambda_*$$ and 
$$D_F r_{EKW}[F^*] h_{F^*,\sigma^1_{1,0}-2\sigma^2_{1,0}}=\lambda_*.$$

Indeed, as in the proof of Lemma \ref{similar_spectra}. If $h=h_{F^*,\sigma^1_{1,0}}$, then 
\begin{eqnarray}
\nonumber DP_{EKW}[F^*]h(x,u)&=&\left(-(\pi_x P_{EKW}[F^*](x,u))^2+\pi_x P_{EKW}[F^*]_1(x,u) x^2, \right.\\
\nonumber & & \left.\quad \pi_u P_{EKW}[F^*]_1(x,u) x^2 \right),\\
\nonumber \pi_x DP_{EKW}[F^*]h(0,0)&=&-(\pi_x P_{EKW}[F^*](0,0))^2=-\lambda_*^2,\\
\nonumber D_F t_{EKW}[F^*] h&=&-\lambda_*\\
\nonumber D_F r_{EKW}[F^*] h&=& {\lambda_*^2 \over \mu_*  \pi_x (F^* \circ F^*)_2(0,0) } \\ 
\nonumber &+& {\lambda_* \left( -2\pi_x P_{EKW}[F^*](0,0)\pi_x P_{EKW}[F^*]_2(0,0)\right.}\\
\nonumber & & \quad +{\left. \pi_x P_{EKW}[F^*]_{1,2}(0,0) 0^2 \right) \over \mu_*  \left(\pi_x (F^* \circ F^*)_2(0,0) \right)^2}\\
\nonumber &=& -\lambda_*+2\pi_x P_{EKW}[F^*](0,0) = \lambda_*
\end{eqnarray}
If $h=h_{F^*,\sigma^2_{1,0}}$, then
\begin{eqnarray}
\nonumber DP_{EKW}[F^*]h(x,u)&=&\left(\pi_x P_{EKW}[F^*]_2(x,u) xu, \right.\\
\nonumber & & \quad -\pi_x P_{EKW}[F^*](x,u)\pi_u P_{EKW}[F^*](x,u) \\
\nonumber & & \left.\quad+\pi_u P_{EKW}[F^*]_2(x,u) xu \right),\\
\nonumber \pi_x DP_{EKW}[F^*]h(0,0)&=& 0\\
\nonumber D_F t_{EKW}[F^*] h&=& 0\\
\nonumber D_F r_{EKW}[F^*] h&=& 0 + {\lambda_* \left(\pi_x P_{EKW}[F^*]_{2,2}(0,0) 0+\pi_x P_{EKW}[F^*]_{2}(0,0)0  \right) \over \mu_*  \left(\pi_x (F^* \circ F^*)_2(0,0) \right)^2} = 0.
\end{eqnarray}
\end{proof}

\begin{definition}\label{strong_renorm}
Suppose $s^*$ is a fixed point of the operator $\cR_*$ (or, equivalently, $\cR_{EKW}$).  Set, formally,
$$\nonumber  \cP[s](x,y)=( 1+ 2 t y) s(G(\xi_t(x,y))), \quad {\rm and} \quad \cR[s]=\mu^{-1} \cP[s] \circ \lambda,$$
where 
\begin{eqnarray}
\nonumber 0&=&s(x, Z(x,y) )+s( y, Z(x,y) ),\\
\nonumber t&=&-{1 \over \lambda_* \|  \psi^{EKW}_{s^*} \|_\rho}\|E(\lambda_*)(\cR_{EKW}[s]-s^{*})\|,\\
\label{new_lambda} 0 &=&\cP[s](\lambda,0),\\
\label{new_mu} \mu &=& \lambda \partial_1 \cP[s](\lambda,0),\\
\label{new_t} \xi_t(x,y)&=&(x+tx^2,y+ty^2),
\end{eqnarray}
$\psi^{EKW}_{s^*}$ is as in $(\ref{new_psi_eigenvector})$, $G$ as in (\ref{G_def}), and $E$ is the Riesz projection for the operator $D \cR_{EKW}[s^{*}].$
\end{definition}

We will quote a version of a lemma from $\cite{Gai4}$ which we will require to demonstrate analyticity and compactness of the operator $\cR$. The proof of the Lemma is computer-assisted. Notice, the parameters that enter the Lemma are different from those used in  $\cite{Gai4}$. As before, the reported numbers are representable on a computer.

\begin{lemma}\label{MC_lemma}
For all $s \in \cB_R(s^0)$, where
$$R=5.47321968732772541 \times 10^{-3},$$
and $s^0$ is as in Theorem $\ref{MyTheorem}$, the prerenormalization $\cP_{EKW}[s]$ is well-defined and analytic function on the set 
$$\cD_r \equiv \cD_r(0)=\{(x,y) \in \field{C}^2: |x|<r, |y|<r\}, \quad r=0.51853174082497335,$$
with  
$$\| Z\|_r \le 1.63160151494042404.$$
\end{lemma}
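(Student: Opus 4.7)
The proof will be a computer-assisted one, and the natural strategy is to reduce the statement to a fixed-point problem for $Z$ and then verify a contraction-mapping hypothesis rigorously on the computer.

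The plan is to solve the midpoint equation
$$s(x,Z(x,y)) + s(y,Z(x,y)) = 0$$
by a Newton iteration starting from a polynomial approximation $Z^{0}(x,y)$ associated with the approximate fixed point $s^{0}$. More precisely, I would introduce the Newton operator
$$N_{s}[Z](x,y) \;=\; Z(x,y) \;-\; \frac{s(x,Z(x,y)) + s(y,Z(x,y))}{s_{2}(x,Z(x,y)) + s_{2}(y,Z(x,y))}$$
viewed as a map on a closed ball $B_{\eta}(Z^{0})$ in the Banach space $\cO_{1}(\cD_{r})$ (with $r$ as in the statement) and show that for every $s \in \cB_{R}(s^{0})$, $N_{s}$ is well-defined and a contraction on that ball, with $\eta$ chosen so that $\|Z^{0}\|_{r} + \eta \le 1.63160151494042404$. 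The fixed point then yields $Z$, and the composition $s \circ G[s]$ is automatically analytic on $\cD_{r}$ since the ``vertical'' argument $y$ stays in $\{|y|<r\} \subset \{|y|<\rho\}$ and $Z$ lands inside the polydisk $\{|w|<\rho\} = \{|w|<1.75\}$ of analyticity of $s$ (because $\rho > 1.6316\ldots$).

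Concretely, the verifications I would perform with interval/ball arithmetic are: (i) a uniform lower bound
$$\bigl| s_{2}(x,W) + s_{2}(y,W) \bigr| \;\ge\; m > 0$$
valid for all $(x,y) \in \cD_{r}$, all $W$ in the $\eta$-neighborhood of the range of $Z^{0}$, and all $s \in \cB_{R}(s^{0})$; this is obtained by expanding around $(s^{0},Z^{0})$ and estimating the perturbation by the $\cA(\rho)$-norm bound $R$ together with standard composition/derivative estimates in $\cA(\rho)$. (ii) A quantitative defect bound
$$\bigl\| s(x,Z^{0}) + s(y,Z^{0}) \bigr\|_{r} \;\le\; \delta$$
for $s \in \cB_{R}(s^{0})$, again reduced via $s = s^{0} + (s-s^{0})$ to a controlled residual of $Z^{0}$ against $s^{0}$ plus a term of order $R$. (iii) A Lipschitz bound for $N_{s}$ on $B_{\eta}(Z^{0})$, with Lipschitz constant $L < 1$, derived from $(i)$ and the uniform bound on $\partial_{w}$-derivatives of $s$ on the slightly larger polydisk. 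Standard inequalities give $\eta \ge \delta/(1-L)$, and this is the quantitative inequality the computer has to certify.

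The main obstacle is the tight interplay between the three numerical constants $R$, $r$, and the bound $1.6316\ldots$ on $\|Z\|_{r}$. All three must be chosen simultaneously so that: the ball $\cB_{R}(s^{0})$ is large enough to contain the perturbations of interest (this is dictated by external needs of the paper, in particular the value of $\varrho_{0}$); the polydisk $\cD_{r}$ is large enough that the two-variable substitution $(x,y) \mapsto (Z(x,y),y)$ still fits inside $\cD_{\rho}$ after one application of $\cP_{EKW}$, so that subsequent estimates (e.g.\ compactness via shrinking of domains) go through; and the contraction constant $L < 1$ is actually realized by the approximate data. This is typically the delicate part: one cannot just estimate abstractly, one needs explicit polynomial arithmetic on $s^{0}$ and $Z^{0}$ with validated bounds on the remainders, performed in a representation (truncated Taylor series plus interval tails) consistent with the norm $\|\cdot\|_{\rho}$ used throughout. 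Once these three verifications are mechanized, the Banach fixed-point theorem delivers the conclusion immediately, and the asserted bound on $\|Z\|_{r}$ reads off as $\|Z^{0}\|_{r} + \eta$.
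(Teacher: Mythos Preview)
The paper does not actually give its own proof of this lemma: it is quoted from \cite{Gai4} (with modified parameters), and the only remark is that ``the proof of the Lemma is computer-assisted''. So there is nothing to compare against in the present paper beyond that one sentence.

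Your proposal is a reasonable and standard way to carry out such a computer-assisted argument. Recasting the midpoint equation as a fixed-point problem for a Newton-type operator on $Z$, verifying a uniform lower bound on $|s_2(x,W)+s_2(y,W)|$, bounding the defect at an approximate $Z^0$, and certifying a contraction constant $L<1$ via interval arithmetic is exactly the kind of scheme one uses for implicit-function statements of this type; the original reference \cite{Gai4} proceeds in this spirit. One small point worth tightening: make explicit that your ball $B_\eta(Z^0)$ is chosen so that its image under $Z\mapsto(Z, y)$ and $Z\mapsto(x,Z)$ lands compactly in $\cD_\rho$ (you say this for the final $Z$, but the contraction argument needs it for all $Z$ in the ball, so that $s$ and $s_2$ can be evaluated there); otherwise step (i) is not well-posed. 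With that caveat your outline is sound.
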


We will now demonstrate analyticity and compactness of the modified renormalization operator in a functional space, different from that used in \cite{EKW2}, specifically, in the space $\cA_s(1.75)$. It is in this space that  we will eventually compute a bound on the spectral radius of the action of the modified renormalization operator on infinitely renormalizable maps.

\begin{prop}\label{R_analyticity}
There exists a polynomial $s_0  \subset \cB_{R}(s^0) \subset \cA_s(1.75)$, where $R$ and $s^0$ are as in Lemma $\ref{MC_lemma}$, such that the operator $\cR$ is well-defined, analytic and compact as a map from $\cB_{\varrho_0}(s_0)$, $\varrho_0=5.79833984375 \times 10^{-4}$, to $\cA_s(1.75)$, if  $\cB_{\varrho_0}(s_0) \subset \cB_{R}(s^0)$ contains the fixed point $s^*$.
\end{prop}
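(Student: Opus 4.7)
The plan is to extend the analyticity and compactness properties of $\cP_{EKW}$ provided by Lemma \ref{MC_lemma} to the modified operator $\cR$, which differs from $\cR_{EKW}$ only through the multiplicative factor $1+2ty$ and the nonlinear coordinate change $\xi_t$. First I would construct the polynomial $s_0$ as an explicit symmetric truncation of a numerically computed approximate fixed point in $\cA_s(1.75)$, and certify the inclusion $\cB_{\varrho_0}(s_0) \subset \cB_R(s^0)$ by bounding $\|s_0-s^0\|_{1.75}+\varrho_0 \le R$ via an interval-arithmetic computation on finitely many Taylor coefficients. Because $s^* \in \cB_{\varrho_0}(s_0)$ by hypothesis, the Riesz projection $E(\lambda_*)$ of $D\cR_{EKW}[s^*]$ exists as a bounded operator, and the functional $s \mapsto t[s]$ defined via (\ref{new_t}) is analytic in $s$ on $\cB_{\varrho_0}(s_0)$; moreover, since $\cR_{EKW}[s]-s^*$ is small (of order $\varrho_0$) for $s$ near $s^*$, one gets an a priori bound $|t[s]| \le t_{\max}$ with $t_{\max}$ small.

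Next I would verify well-definedness of $\cR[s]$ on $\cD_{1.75}$. For $(x,y) \in \cD_{1.75}$, the scaled argument $\lambda(x,y)$ lies in $\cD_{|\lambda|\cdot 1.75}$. Using the bounds $|\lambda[s]| \le 0.27569\ldots$ from Theorem \ref{MyTheorem} (applied after the small coordinate-change correction inherited from $\xi_t$), we get $|\lambda|\cdot 1.75 < 0.483$. A direct estimate then shows that $\xi_t$ maps this region into $\cD_{r'}$ with
\begin{equation*}
r' \le |\lambda|\cdot 1.75\,(1+t_{\max}\,|\lambda|\cdot 1.75) < r = 0.51853\ldots,
\end{equation*}
placing $\xi_t(\lambda\cdot \cD_{1.75})$ strictly inside the domain where Lemma \ref{MC_lemma} guarantees that $Z$ is analytic with $\|Z\|_{r'} < 1.75$. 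Thus $s \circ G \circ \xi_t \circ \lambda$ is an analytic function on $\cD_{1.75}$, and multiplication by $1+2t(\lambda y)$ preserves this. The scaling equations (\ref{new_lambda}), (\ref{new_mu}) are solved by an implicit-function / Newton argument centered on the pair $(\lambda_*, \mu_*)$, yielding analytic dependence of $\lambda[s]$ and $\mu[s]$ on $s$ and a nonzero lower bound on $|\mu[s]|$ by interval enclosure.

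Analyticity of $\cR$ as a Banach-space map then follows since it is a composition of analytic building blocks: $s \mapsto (t[s], \lambda[s], \mu[s])$, composition with the analytic diffeomorphism $G \circ \xi_t \circ \lambda$, multiplication by $1+2ty$, and rescaling by $\mu^{-1}$. For compactness, I would tighten the domain estimate above to show $r' < r - \eta$ for some explicit $\eta > 0$, so that $\cR[s]$ is actually analytic on a strictly larger bidisk $\cD_{\rho^{\prime\prime}}$ with $\rho^{\prime\prime} > 1.75$; the inclusion $\cA_s(\rho^{\prime\prime}) \hookrightarrow \cA_s(1.75)$ is compact by a standard Cauchy-estimate argument (the unit ball in the larger space has uniformly decaying Taylor tails in the smaller one), which gives compactness of $\cR$.

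The main obstacle is the computer-assisted verification that all radii close with strict inequality \emph{uniformly} on $\cB_{\varrho_0}(s_0)$: pinning down an $s_0$ for which $\cB_{\varrho_0}(s_0)$ simultaneously lies inside $\cB_R(s^0)$, yields $|t[s]|$ small enough that $\xi_t\circ\lambda(\cD_{1.75}) \Subset \cD_r$, and delivers $\cR[s]\in\cA_s(\rho^{\prime\prime})$ with $\rho^{\prime\prime}>1.75$. This is essentially a tight interval-arithmetic budget problem combining the Taylor-coefficient bounds on $s_0$, the contractivity estimates on the implicit equations for $\lambda$ and $Z$, and a quantitative bound on $\|E(\lambda_*)\|$; the rest of the argument is formal.
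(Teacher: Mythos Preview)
Your proposal is correct and follows essentially the same route as the paper's proof: construct $s_0$ numerically, use the hypothesis $s^*\in\cB_{\varrho_0}(s_0)$ to bound $|t[s]|$ uniformly (via the smallness of $\cR_{EKW}[s]-s^*$), verify the compact domain inclusion $\xi_t\circ\lambda(\cD_{1.75})\Subset\cD_r$ so that Lemma~\ref{MC_lemma} applies, and deduce compactness from the compact embedding $\cA_s(\rho'')\hookrightarrow\cA_s(1.75)$. The only minor deviation is that you flag a quantitative bound on $\|E(\lambda_*)\|$ as part of the interval-arithmetic budget, whereas the paper simply replaces $\|E(\lambda_*)(\cR_{EKW}[s]-s^*)\|_\rho$ by $\|\cR_{EKW}[s]-s^*\|_\rho$ in the estimate for $|t|$ and evaluates the latter directly on the computer; either way the resulting bound on $|t|$ is tiny and the rest of the argument closes.
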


\begin{proof}
The polynomial $s_0$ has been computed as a high order numerical approximation of a fixed point $s^*$ of $\cR$.

First, we get a bound on $t$ for all $s \in \cB_\delta(s_0)$:
\begin{eqnarray}
\nonumber |t|&=&{1 \over |\lambda_*| \|\psi^{EKW}_{s^*}\|_\rho} \|E(\lambda_*)(\cR_{EKW}[s]-s^*)\|_\rho\\
\nonumber &\le & {1 \over |\lambda_+|  \psi^{EKW}_{s^*}\|_\rho} \|\cR_{EKW}[s]-s^*\|_\rho.
\end{eqnarray}
We estimate  the right hand side rigorously on the computer and obtain
\begin{equation}\label{t[F]_bound}
|t| \le 2.1095979213715 \times 10^{-6}.
\end{equation}
The condition of the hypothesis that $s^* \in \cB_\delta(s_0)$ is specifically required to be able to compute this estimate.

Notice that according to Definition $\ref{strong_renorm}$ and  Theorem $\ref{MyTheorem}$, the maps $s \mapsto t$ and, hence, $s \mapsto \xi_t$ are analytic on a larger neighborhood $\cB_R(s^0)$ of analyticity of $\cR_{EKW}$. According to  Theorem $\ref{MyTheorem}$ and  Lemma $\ref{MC_lemma}$, the prerenormalization $\cP_{EKW}$ is also analytic as a map from $\cB_R(s^0)$ to $\cA_s(r)$, $r=0.516235055482147608$. We verify that for all $s \in \cB_\delta(s_0)$ and $t$ as in $(\ref{t[F]_bound})$ the following holds:
\begin{equation}\label{inclusion}
\{\xi_{t}(x, y): (x,y) \in \cD_{r'}\} \Subset \cD_{r}, \quad r'= |\lambda_-|\rho,
\end{equation}
where $\lambda_-= -0.27569580078125$ is the lower bound from Theorem $\ref{MyTheorem}$. Furthermore,
$$1>2 |t| \rho$$
with $t$ as in $(\ref{t[F]_bound})$. Therefore, the map $s \mapsto \cP[s]$ is analytic on $\cB_\delta(s_0)$. 
 

Since the inclusion of sets $(\ref{inclusion})$ is compact, $\cR[s]$ has an analytic extension to a neighborhood of $\cD_{1.75}$, $\cR[s] \in \cA_s(\rho')$, $\rho'>1.75$. Compactness of the map $s \mapsto \cR[s]$ now follows from the fact that the inclusions of spaces  $\cA_s(\rho') \subset \cA_s(\rho)$ is  compact.   
\end{proof}

\bigskip

Recall, that according to Lemma $\ref{same_spectra_new}$,  $\lambda_*$ is an eigenvalue of $D R_*[F^*]$ of multiplicity at least $1$. According to Lemma $\ref{similar_spectra}$, $\lambda_*$ is in the spectrum of $D R_{EKW}[F^*]$, and according to Lemma $\ref{same_spectra}$, $\lambda_* \in D \cR_{EKW}[s^{*}]$.

\begin{prop} \label{R_EKW_subset}
Suppose that  $\beta$, $\rho$, $\varrho$ and the neighborhood  $\cB_\varrho(s^{*}) \subset \cA_s^\beta(\rho)$  satisfy the hypothesis of Lemma $\ref{same_spectra_new}$. Furthermore, suppose that the operator $\cR$ is analytic and compact in $\cB_\varrho(s^{*})$. 

Then
$${\rm spec}(D \cR_{EKW}[s^{*}]) \setminus \{ \lambda_* \} \subset {\rm spec}(D \cR[s^{*}]),$$
and $\psi^{EKW}_{s^*}$ is an eigenvector of   $D \cR[s^{*}]$ associated with the eigenvalue $0$.

In addition,
$$
{\rm spec}(D \cR[s^{*}]) \subset {\rm spec}(D \cR_{EKW}[s^{*}]),$$
and if $\lambda_* \notin {\rm spec}(D \cR[s^{*}])$, then $\lambda_*$ has multiplicity $1$ in  ${\rm spec}(D \cR_{EKW}[s^{*}])$.

\end{prop}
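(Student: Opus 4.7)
The plan is to mimic the argument of Lemma~\ref{elimination_lemma} at the level of generating functions. Because $\cR_{EKW}[s^*]=s^*$, the defining formula for $t[s]$ forces $t[s^*]=0$, and at $t=0$ the operator $\cR$ coincides with $\cR_{EKW}$, so $\cR[s^*]=s^*$. Writing $\cR[s]=\tilde{\cR}(s,t[s])$ with $\tilde{\cR}(s,t)$ treating $t$ as an independent parameter, the chain rule gives the central decomposition
\begin{equation*}
D\cR[s^*]h=D\cR_{EKW}[s^*]h+\Psi\cdot D_s t[s^*]h,\qquad \Psi:=\partial_t\tilde{\cR}(s^*,0).
\end{equation*}

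The crux of the argument is to identify $\Psi=\lambda_*\psi^{EKW}_{s^*}$, the eigenvector from Lemma~\ref{new_lambda_lemma} rescaled by $\lambda_*$. Fixing $s=s^*$ and varying $t$ is the generating-function counterpart of conjugating the prerenormalization $F^*\circ F^*$ by the one-parameter family $S_t$: by the computation inside the proof of Lemma~\ref{lambda_lemma}, $\partial_t|_{t=0}\cP[s^*]=\psi_{\cP_{EKW}[s^*]}$. Tracking how $\lambda(t)$ and $\mu(t)$ depend on $t$ through the implicit normalizations $\cP[s](\lambda,0)=0$ and $\mu=\lambda\,\partial_1\cP[s](\lambda,0)$ --- the same bookkeeping that produces the correction $\tilde{\psi}$ in Lemma~\ref{new_lambda_lemma} --- converts the bare coordinate-change direction $\psi_{s^*}$ into the EKW-eigenvector $\psi^{EKW}_{s^*}$ and introduces the overall factor $\lambda_*$.

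Once $\Psi$ is identified, the spectral conclusions follow as in Lemma~\ref{elimination_lemma}. Linearizing the defining formula for $t[s]$ yields
\begin{equation*}
D_s t[s^*]h=-\bigl(\lambda_*\|\psi^{EKW}_{s^*}\|_\rho\bigr)^{-1}\|E(\lambda_*)(D\cR_{EKW}[s^*]h)\|_\rho,
\end{equation*}
which equals $-1$ when $h=\psi^{EKW}_{s^*}$ (using $D\cR_{EKW}[s^*]\psi^{EKW}_{s^*}=\lambda_*\psi^{EKW}_{s^*}$ and $E(\lambda_*)^2=E(\lambda_*)$) and equals $0$ when $h$ is an eigenvector of $D\cR_{EKW}[s^*]$ with eigenvalue $\delta\ne\lambda_*$ (since $E(\lambda_*)D\cR_{EKW}[s^*]h=\delta E(\lambda_*)h=0$). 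Substituting into the decomposition gives $D\cR[s^*]\psi^{EKW}_{s^*}=0$ and $D\cR[s^*]h=\delta h$, which establishes both the forward inclusion and the eigenvector claim. For the reverse inclusion and the multiplicity assertion, given an eigenvector $h$ of $D\cR[s^*]$ at $\delta\ne\lambda_*$ one chooses $a\in\R$ so that $h+a\psi^{EKW}_{s^*}$ is an eigenvector of $D\cR_{EKW}[s^*]$ at $\delta$, exactly as in the last paragraph of the proof of Lemma~\ref{elimination_lemma}; a hypothetical second eigenvector of $D\cR_{EKW}[s^*]$ at $\lambda_*$ independent of $\psi^{EKW}_{s^*}$ would, by the same construction, produce a nonzero eigenvector of $D\cR[s^*]$ at $\lambda_*$, contradicting the hypothesis $\lambda_*\notin\mathrm{spec}(D\cR[s^*])$.

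The main obstacle is the computation of $\Psi$: one must calculate $\lambda'(0)$ and $\mu'(0)$ from the implicit normalization conditions at the fixed point, combine them with $\partial_t\cP[s^*]|_{t=0}$, and verify that the three contributions assemble into $\lambda_*\psi^{EKW}_{s^*}$ rather than some uncontrolled linear combination of $\psi_{s^*}$, $\psi_x=s^*_1x+s^*_2y$, and $\psi_u=s^*$. The remainder is a formal exercise in spectral-projection algebra, invoking $E(\lambda_*)^2=E(\lambda_*)$ and the commutation of $E(\lambda_*)$ with $D\cR_{EKW}[s^*]$.
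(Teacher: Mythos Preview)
Your proposal is correct and follows essentially the same route as the paper. The paper carries out explicitly the computation you identify as ``the main obstacle'': it differentiates $\lambda[s]$ and $\mu[s]$ at $s^*$ to obtain $D_s\lambda[s^*]\psi=D_s\lambda_{EKW}[s^*]\psi-\lambda_*^2 D_st[s^*]\psi$ and $D_s\mu[s^*]\psi=D_s\mu_{EKW}[s^*]\psi-\lambda_*\mu_*D_st[s^*]\psi$, then expands $D\cR[s^*]\psi$ directly and simplifies term by term to arrive at the same decomposition $D\cR[s^*]\psi=D\cR_{EKW}[s^*]\psi+\lambda_*(D_st[s^*]\psi)\psi^{EKW}_{s^*}$ that you obtain via the chain rule for $\tilde{\cR}(s,t)$; the spectral-projection algebra that follows is identical in both treatments.
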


\begin{proof}
First, notice the difference between the definition of $\lambda$ in $(\ref{EKW_def})$ 
$$s(G(\lambda,0))=0,$$
and in Definition $(\ref{strong_renorm})$
$$s(G(\lambda+t \lambda^2,0))=0$$
(we will use the notation  $\lambda_{EKW}$ below to emphasize the difference). This implies that if $D_s \lambda_{EKW}[s] \psi$ is an action of the derivative of $\lambda_{EKW}[s]$ on a vector $\psi$, then
$$D_s \lambda[s^*] \psi=D_s \lambda_{EKW}[s^*] \psi -\lambda_*^2 D_st[s^*] \psi$$
is that of the derivative of $\lambda[s]$.

Similarly,
\begin{eqnarray}
\nonumber D_s \mu_{EKW}[s^*] \psi &=&  \left[ \partial_1 (s^* \circ G)(\lambda_*,0)+ \lambda_* \partial^2_{1} (s^* \circ G) (\lambda_*,0)\right]D_s \lambda_{EKW}[s^*] \psi \\
\nonumber &\phantom{=}&\phantom{ \left.  \partial_1 (S^* \circ G)(\lambda_*,0) \right . }+ \lambda_* \partial_1 ( D_s\cP_{EKW}[s^*] \psi )(\lambda_*,0),\\
\nonumber D_s \mu[s^*] \psi &=&\left[ \partial_1 (s^* \circ G)(\lambda_*,0)+ \lambda_* \partial^2_{1} (s^* \circ G) (\lambda_*,0)\right] D_s \lambda[s^*] \psi  \\
\nonumber &\phantom{=}&\phantom{ \left.  \partial_1 (S^* \circ G)(\lambda_*,0) \right. }  + \lambda_* \partial_1 ( D_s\cP_{EKW}[s^*]\psi)(\lambda_*,0)\\
\nonumber &\phantom{=}&\phantom{ \left.  \partial_1 (S^* \circ G)(\lambda_*,0) \right. } + \lambda_*^3 \partial_1^2 (s^* \circ G)(\lambda_*,0) D_st[s^*] \psi\\
\nonumber &=& D_s \mu_{EKW}[s^*] \psi- \partial_1\cP_{EKW}[s^*](\lambda_*,0)  \lambda_*^2  D_st[s^*]\psi \\
\nonumber &=& D_s \mu_{EKW}[s^*] \psi-  \lambda_* \mu_*  D_st[s^*]\psi.
\end{eqnarray}

Therefore,
\begin{eqnarray}\label{evec_R_REKW}
\nonumber D \cR[s^{*}] \psi=D \cR_{EKW}[s^{*}]\psi &+& 2 \lambda_* \left( D_s t[s^*] \psi \right) s^* \pi_y+{1 \over \mu_*} \left( D \cP_{EKW}[s^*] \cdot  \left( D_s\xi_t \psi \right) \right)\circ \lambda_*\\
\nonumber  &-& D_s t[s^*] \psi {\lambda^2_* \over \mu_*} D \cP_{EKW}[s^*] \circ \lambda_*  \cdot (\pi_x,\pi_y) \\
\nonumber & +&\lambda_*  D_st[s^*]\psi  s^*\\
\nonumber =D \cR_{EKW}[s^{*}]\psi & -& \lambda_* \left(D_s t[s^*] \psi\right)  D s^*  \cdot (\pi_x,\pi_y) +\lambda_* \left( D_st[s^*]\psi\right)  s^*\\
\nonumber &+&  \lambda_* \left( D_s t[s^*] \psi \right) \psi_{s^*}\\
=D \cR_{EKW}[s^{*}]\psi &+& \lambda_* \left( D_s t[s^*] \psi \right)  \psi^{EKW}_{s^*}\label{R_EKW_R}
\end{eqnarray}
where
\begin{eqnarray}
\nonumber D_s t[s^*] \psi&=&  -\lambda_*^{-1} \|\psi^{EKW}_{s^*}\|_\rho^{-1} \| E(\lambda_*)  \left(D \cR_{EKW} [s^{*}] \psi \right)\|_\rho,\\
\nonumber D_s \xi_t[s^*] \psi(x,y)&=&\left(D_s t \psi \right) (x^2 ,y^2) \\
\nonumber &=&  -\lambda_*^{-1} \|\psi^{EKW}_{s^*}\|_\rho^{-1} \| E(\lambda_*)  \left(D \cR_{EKW} [s^{*}] \psi \right)\|_\rho (x^2 ,y^2).
\end{eqnarray}

Similarly to Lemma $(\ref{elimination_lemma})$, we get that if $\psi$ is an eigenvector of $D \cR_{EKW}[s^*]$ associated with the eigenvalue $\delta \ne \lambda_*$, then  $\psi \ne  \psi^{EKW}_{s^*}$, and
$$E(\lambda_*)  \left(D \cR_{EKW} [s^{*}] \psi\right)=\delta E(\lambda_*) \psi=0,$$ 
so is  $D_s t[s^*] \psi$, and  
$$D \cR[s^{*}] \psi=D \cR_{EKW}[s^{*}]\psi=\delta \psi.$$

If $\delta=\lambda_*$ and $\psi=\psi^{EKW}_{s^*}$, then 
$$D_s t[s^*] \psi=  -1,\quad D_s \xi_t[s^*] \psi(x,y)=- (x^2 ,y^2),$$
and therefore,
$$D \cR[s^{*}] \psi^{EKW}_{s^*}=\lambda_* \psi^{EKW}_{s^*}- \lambda_*\psi^{EKW}_{s^*}=0,
$$
and $\psi^{EKW}_{s^*}$ is an eigenvector of $D  \cR[s^*]$ associated with the eigenvalue $0$.


Vice verse, by (\ref{evec_R_REKW}), if $\psi$ is an eigenvector of $D\cR[s^*]$ associated with the eigenvalue $\delta\neq \lambda_*$, then
\begin{eqnarray*}
D \cR_{EKW}[s^{*}] (\psi+a\psi^{EKW}_{s^*})  &=& D \cR[s^{*}]\psi-\lambda_*(D_s t[s^*] (\psi+a \psi^{EKW}_{s^*})\psi^{EKW}_{s^*}\\
&=&\delta \psi-\lambda_*(D_s t[s^*] \psi-a)\psi^{EKW}_{s^*}
\end{eqnarray*}
Hence, $\psi+\frac{\lambda_*D_s t[s^*] \psi}{\lambda_*-\delta}\psi^{EKW}_{s^*}$ is an eigenvector of $D\cR_{EKW}[s^*]$ with the eigenvalue $\delta$. 

Finally, assume that $\lambda_* \notin {\rm spec}(D \cR[s^{*}])$, but that there exists an eigenvector $\varphi \neq \psi_{s^*}^{EKW}$ of $D\cR_{EKW}[s^*]$ with eigenvalue $\lambda_*$. Then 
$$ D_s t[s^*] \varphi = -\frac{\|\varphi\|_\rho}{\|\psi^{EKW}_{s^*}\|_\rho},$$
and, by (\ref{evec_R_REKW}),
\begin{eqnarray*}
D \cR[s^{*}]\left(\varphi-\frac{\|\varphi\|_\rho}{\|\psi^{EKW}_{s^*}\|_\rho}\psi^{EKW}_{s^*}\right) & = &
D \cR[s^{*}]\varphi \\
& = &
\lambda_*\varphi+\lambda_*\left(-\frac{\|\varphi\|_\rho}{\|\psi^{EKW}_{s^*}\|_\rho}\right)\psi^{EKW}_{s^*} \\
& = &
\lambda_*\left(\varphi-\frac{\|\varphi\|_\rho}{\|\psi^{EKW}_{s^*}\|_\rho}\psi^{EKW}_{s^*}\right).
\end{eqnarray*}
This contradiction finishes the proof.
\end{proof}

So far we were not able to make any claims about the multiplicity of the eigenvalue $\lambda_*$ in the spectrum of $D \cR_{EKW}[s^{*}]$. However, we will demonstrate in Section $\ref{R_spectral_prop}$ that it is indeed equal to $1$.

\begin{definition}\label{strong_Frenorm}
Set, formally,
\begin{eqnarray}
\label{Fren_eq} R[F]&=&\Lambda^{-1}_F \circ P[F] \circ \Lambda_F,\\
\nonumber P[F]&=& S_{t[F]}^{-1} \circ F \circ F \circ S_{t[F]},
\end{eqnarray}
where $S_{t[F]}$  is as in $(\ref{coord})$, $\Lambda_F(x,u)=(\lambda[F] x, \mu[F] u)$, 
$$t[F]=-{1 \over \lambda_* \|h_{F^*,\sigma}\|_\cD} \|E(\lambda_*)(R_{EKW}[F]-F^*)\|_{\cD},$$
where
$$\sigma=\sigma^1_{1,0}-2 \sigma^2_{1,0}-\sigma^1_{0,0}+\sigma^2_{0,0},$$
and, furthermore,
\begin{eqnarray}
\nonumber \lambda[F]&=&\pi_x P[F](0,0), \\
\nonumber \mu[F]&=&{-\lambda[F] \over  \pi_x P[F]_2(0,0)}.
\end{eqnarray}

\end{definition}

The above is a formal definition. As usual, one would have to verify the properties of being well-defined, analytic and compact, in a setting of a specific functional space.




\bigskip

\section{Spectral properties of $\cR$. Proof of Main Theorem} \label{R_spectral_prop}

We will now describe our computer-assisted proof of Main Theorem.

To implement the operator $D \cR[s^*]$ on the computer, we would have to implement  the Riesz projection as  well.  Unfortunately, this is not easy, therefore, we  do it only approximately, using the operator $\cR_c$ introduced in the Definition $\ref{simple_strong_renorm}$.  Specifically, the component $(0,3)$ of the composition $s \circ G$ will be consistently normalized to be
$$c_0= \left(s_0 \circ G[s_0] \right)_{(0,3)},$$
where $s_0$ is our polynomial approximation for the fixed point. 

The operator $\cR_c$ differs from  $\cR$ (cf.$\ref{strong_renorm}$) only in the ``amount'' by which the eigendirection $\psi^{EKW}_{s^*}$ is ``eliminated''. In particular, as the next proposition demonstrates,  $R_c$ is still analytic and compact in the same neighborhood of $s_0$.

\bigskip

\begin{prop}\label{tilde_R_analyticity}
There exists a polynomial $s_0  \subset \cB_{R}(s^0) \subset \cA_s(1.75)$, where $R$ and $s^0$ are as in Theorem $\ref{MyTheorem}$, such that the operators $\cR_c$, $c \in [c_0-\delta,c_0+\delta]$,
$$c_0=\left(s_0 \circ G[s_0] \right)_{(0,3)} \quad {\rm and} \quad \delta=1.068115234375 \times 10^{-4},$$ 
are well-defined and   analytic as  maps from $\cB_{\varrho_0}(s_0)$, $ \varrho_0=5.79833984375 \times 10^{-4}$, to $\cA_s(1.75)$.

Furthermore, the operators $\cR_c$ are compact in $B_{R}(s^0) \subset \cA(\rho)$, with $\cR_c[s] \in \cA(\rho')$, $\rho'=1.0699996948242188 \rho$.
\end{prop}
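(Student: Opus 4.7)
The plan is to mirror the proof of Proposition \ref{R_analyticity}, replacing the implicitly-defined parameter $t$ (given through the Riesz projection) by the explicitly rational expression $t_c[s]$ of Definition \ref{simple_strong_renorm}. The simpler definition of $t_c$ is actually easier to handle on the computer, but all the geometric and analytic ingredients --- domain inclusions, bounds on scalings, and compactness via compact embeddings of Banach spaces of holomorphic functions --- are identical.

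First I would prove analyticity of $s \mapsto t_c[s]$ on $\cB_{\varrho_0}(s_0)$. The numerator $c - (s \circ G[s])_{(0,3)}$ and denominator $(s \circ G[s])_{(0,2)}$ are both evaluations of analytic linear functionals (extraction of Taylor coefficients) on the analytic map $s \mapsto s \circ G[s]$ provided by Theorem \ref{MyTheorem}. Hence $t_c[s]$ is analytic as long as the denominator stays away from zero. A computer-assisted interval-arithmetic check gives a uniform lower bound on $|(s \circ G[s])_{(0,2)}|$ for $s \in \cB_{\varrho_0}(s_0)$, starting from the approximate value at $s = s_0$ and controlling the error by Cauchy estimates on $\cR_{EKW}[s] - \cR_{EKW}[s_0]$ viewed as an element of $\cA(\rho')$. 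Combined with $|c - c_0| \le \delta$ and a similar estimate for $|(s \circ G[s])_{(0,3)} - c_0|$, this yields a uniform bound $|t_c| \le \tau_*$ with an explicit small $\tau_*$, valid for all $c \in [c_0-\delta, c_0+\delta]$ and $s \in \cB_{\varrho_0}(s_0)$.

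Next I would check the geometric inclusion required by Lemma \ref{MC_lemma}, namely
$$\{\xi_{t_c}(x,y) : (x,y) \in \cD_{r'}\} \Subset \cD_r,$$
where $r = 0.51853174082497335$ is as in Lemma \ref{MC_lemma}, and $r' = |\lambda_-|\, \rho$ with $\rho = 1.75$ and $\lambda_- = -0.27569580078125$ from Theorem \ref{MyTheorem}. This reduces to the elementary inequality $r'(1 + \tau_* r') < r$, easily verified on the computer with the bound $\tau_*$ from the previous step. Together with the constraint $2|t_c| \rho < 1$ (so the factor $1 + 2 t_c y$ is bounded and nonvanishing on $\cD_\rho$), this makes $\cP_c[s] = (1 + 2 t_c y)\, s \circ G \circ \xi_{t_c}$ well-defined and analytic on $\cD_{r'}$, and thus, after the dilation $\lambda$ and the normalization by $\mu$ available from Theorem \ref{MyTheorem} \textit{ii)}, it gives $\cR_c[s] \in \cA_s(\rho'')$ for some $\rho'' > \rho$. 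Compactness of $\cR_c : \cB_{\varrho_0}(s_0) \to \cA_s(\rho)$ then follows from the compact embedding $\cA_s(\rho'') \hookrightarrow \cA_s(\rho)$; the explicit value $\rho'' = 1.0699996948242188\, \rho$ stated in the conclusion comes from combining the quantitative inclusion in Lemma \ref{MC_lemma} with the known interval $[\lambda_-, \lambda_+]$ from Theorem \ref{MyTheorem}.

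The main obstacle I expect is the uniform nonvanishing of the denominator $(s \circ G[s])_{(0,2)}$: this Taylor coefficient must be certified bounded away from zero over the whole ball $\cB_{\varrho_0}(s_0)$, which requires propagating the implicit function $Z$ defined by the midpoint equation $s(x, Z) + s(y, Z) = 0$ through interval arithmetic with tight enough bounds. Once that is in place, uniformity in $c$ over $[c_0 - \delta, c_0 + \delta]$ is automatic since $t_c$ depends affinely on $c$, and the remainder of the argument consists of routine (if laborious) rigorous computer estimates of the same kind as those already used in Proposition \ref{R_analyticity} and Theorem \ref{MyTheorem}.
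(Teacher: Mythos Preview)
Your proposal is correct and follows precisely the same approach as the paper, which in fact gives only a one-line proof: ``The proof is almost identical to that of Proposition~\ref{R_analyticity}, with a different (but still sufficiently small) bound on $|t_c[s]|$.'' You have simply spelled out in more detail the steps that the paper leaves implicit by reference to the earlier proposition.
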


\begin{proof}
The proof is almost identical to that of Proposition $\ref{R_analyticity}$, with a different (but still sufficiently small) bound on $|t_c[s]|$.
\end{proof}

\bigskip

The following Lemma shows that the spectra of the operators $\cR$ and  $\cR_{c}$  are close to each other.

\begin{lemma}\label{R_containment}
Suppose that the neighborhood  $\cB_{\varrho_0}(s_0)$,  with $\varrho_0$ as in  Propositions $\ref{R_analyticity}$ and $\ref{tilde_R_analyticity}$, contains a fixed point $s^*$ of $\cR$, and of  $\cR_{c^*}$ for 
$$c^*= \left(s^* \circ G[s^*] \right)_{(0,3)}.$$ 
Set
$$\delta=0.00124359130859375,$$
then
$${\rm spec}\left(D \cR[s^*]\right) \setminus \left\{z \in \field{C}: |z| \le \delta \right\} \subset {\rm spec}\left(D \cR_{c^*}[s^*]\right) \setminus \left\{z \in \field{C}: |z| \le \delta \right\}. $$
\end{lemma}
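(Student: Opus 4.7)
The plan is to exploit that, at the common fixed point $s^*$, the operators $\cR$ and $\cR_{c^*}$ differ by a rank-one perturbation with range spanned by the eigendirection $\psi_{s^*}^{EKW}$, and then to transfer eigenvectors across this perturbation.

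First, I would observe that at $s^*$ both coordinate-change parameters vanish: $t[s^*]=0$ (equivalently, a fixed point of $\cR$ is also a fixed point of $\cR_{EKW}$, since $t[s^*]\neq 0$ would make $\cR[s^*]\neq \cR_{EKW}[s^*]$), and $t_{c^*}[s^*]=0$ by the very choice $c^*=(s^*\circ G[s^*])_{(0,3)}$ which makes the numerator of $t_c$ vanish at $s^*$. Repeating verbatim the computation carried out for $\cR$ in the proof of Proposition~\ref{R_EKW_subset}, but applied to both operators in parallel, yields
\begin{align*}
D\cR[s^*]\psi &= D\cR_{EKW}[s^*]\psi + \lambda_* \bigl(D_s t[s^*]\psi\bigr)\,\psi_{s^*}^{EKW},\\
D\cR_{c^*}[s^*]\psi &= D\cR_{EKW}[s^*]\psi + \lambda_* \bigl(D_s t_{c^*}[s^*]\psi\bigr)\,\psi_{s^*}^{EKW}.
\end{align*}
The derivation is formally identical because $t$ enters in the same way through the coordinate change $\xi_t$ and the induced adjustments of $\lambda,\mu$; only the coefficient in front of $\psi_{s^*}^{EKW}$ changes. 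Subtraction shows that the operator $M := D\cR[s^*] - D\cR_{c^*}[s^*]$ has the rank-one form $M\psi = L(\psi)\,\psi_{s^*}^{EKW}$ for a bounded linear functional $L$ computed from the explicit expressions for $t$ and $t_{c^*}$.

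Using that $D\cR[s^*]\psi_{s^*}^{EKW}=0$ from Proposition~\ref{R_EKW_subset}, one obtains $D\cR_{c^*}[s^*]\psi_{s^*}^{EKW}=\eta\,\psi_{s^*}^{EKW}$ with $\eta=-L(\psi_{s^*}^{EKW})$. From the formula $t_c[s]=\tfrac{1}{4}(c-(s\circ G)_{(0,3)})/(s\circ G)_{(0,2)}$, the value of $\eta$ depends only on $D_s((s\circ G)_{(0,3)})[s^*]\psi_{s^*}^{EKW}$ and the normalizing coefficient $(s^*\circ G)_{(0,2)}$. I would establish the quantitative bound $|\eta|\le \delta$ by a rigorous computer-assisted estimate, working in the ball $\cB_{\varrho_0}(s_0)$ containing $s^*$ and using the explicit polynomial approximation $s_0$. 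This is the main (and essentially only) technical obstacle, and it is what forces the specific numerical value of $\delta$ in the statement.

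Finally, given any $\kappa\in\mathrm{spec}(D\cR[s^*])$ with $|\kappa|>\delta$, compactness of $D\cR[s^*]$ (Proposition~\ref{R_analyticity}) guarantees that $\kappa$ is an eigenvalue with eigenvector $\psi$; moreover $\psi$ is not a scalar multiple of $\psi_{s^*}^{EKW}$, for otherwise $\kappa=0$. Setting
\[
a:=\frac{L(\psi)}{\eta-\kappa},
\]
which is well-defined since $|\eta|\le\delta<|\kappa|$, a direct calculation using the rank-one form of $M$ gives
\[
D\cR_{c^*}[s^*]\bigl(\psi+a\,\psi_{s^*}^{EKW}\bigr) = \kappa\psi - L(\psi)\psi_{s^*}^{EKW} + a\eta\,\psi_{s^*}^{EKW} = \kappa\bigl(\psi+a\,\psi_{s^*}^{EKW}\bigr),
\]
and $\psi+a\psi_{s^*}^{EKW}\neq 0$ (otherwise $\psi$ would lie in the kernel of $D\cR[s^*]$). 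Hence $\kappa\in\mathrm{spec}(D\cR_{c^*}[s^*])$, and since $|\kappa|>\delta$ the inclusion of spectra outside the $\delta$-disk follows.
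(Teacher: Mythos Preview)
Your argument is correct and follows essentially the same route as the paper's proof: both recognize that $D\cR[s^*]$ and $D\cR_{c^*}[s^*]$ differ by a rank-one operator with range $\mathrm{span}\{\psi_{s^*}^{EKW}\}$, identify the key scalar $\eta$ (the paper's $\lambda_* C$) as the eigenvalue of $D\cR_{c^*}[s^*]$ on $\psi_{s^*}^{EKW}$, and then solve the one-dimensional equation for $a$ to transfer eigenvectors whenever $|\kappa|>|\eta|$. The only difference is that the paper derives $\eta=\lambda_* C$ explicitly via Taylor-coefficient arithmetic, obtaining $C=\tfrac{1}{2}\,c^*/\cP_{EKW}[s^*]_{(0,2)}$ at $c=c^*$, before invoking the numerical bound, whereas you defer the evaluation of $\eta$ entirely to the computer.
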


\begin{proof}
According to  Propositions $\ref{R_analyticity}$ and $\ref{tilde_R_analyticity}$, under the hypothesis of the Lemma, $\cR$ and $\cR_{c^*}$ are analytic and compact as operators from  $\cB_{\delta}(s_0)$ to $\cA_s(1.75)$.

Recall, that $\psi^{EKW}_{s^*}$ is an eigenvector of $D \cR_{EKW}[s^*]$ corresponding to the eigenvalue $\lambda_*$. 


We consider the action of $D \cR_{c^*}[s^*]$ on a vector $\psi$. Similarly to $(\ref{R_EKW_R})$,
\begin{eqnarray}
\nonumber D \cR_{c^*}[s^*] \psi&=&D \cR_{EKW}[s^{*}]\psi +  \lambda_* \left( D_s t_c[s^*] \psi \right) \psi_{s^*} +\lambda_*  \left( D_s t_c[s^*] \psi \right) \tilde{\psi}\\
\nonumber &=& D \cR[s^{*}] \psi +  \lambda_*  \left( \left( D_s t_c[s^*] -D_s t[s] \right) \psi \right) \psi^{EKW}_{s^*}.
\end{eqnarray}

Now, let $\psi$ be an eigenvector of $D  \cR[s^*]$ of eigenvalue $\kappa  \ne 0$ (that is, $\psi\ne\psi_{s^*}^{EKW}$). Consider the action of $D \cR_{c^*}[s^*]$  on $\psi +a \psi_{s^*}^{EKW}$.
$$D \cR_{c^*} [s^*]  (\psi+a \psi_{s^*}^{EKW})  = \kappa \psi+ \lambda_* \left( D_s t_c[s^*]-D_s t[s^*] \right) (\psi+a \psi_{s^*}^{EKW})   \psi_{s^*}^{EKW}.$$

Notice,
\begin{eqnarray*}
D_s t_c[s^*]\psi_{s^*}^{EKW}&=& D_s t_c[s^*](\psi_{s^*}+\psi_u-\psi_x)\\
&=&
-{1\over 4} {\left( D \cP_{EKW}[s^*] ( \psi_{s^*}+\psi_u-\psi_x ) \right)_{0,3}   \over \cP_{EKW}[s^*]_{0,2} }\\
& &-
{1\over 4} {\left( D \cP_{EKW}[s^*] ( \psi_{s^*}+\psi_u-\psi_x ) \right)_{0,2} (c-\cP_{EKW}[s^*]_{0,3})  \over \left(\cP_{EKW}[s^*]_{0,2}\right)^2 } \\
&=&
-{1\over 4} { \left( \psi_{\cP_{EKW}[s^*]}+\cP_{EKW}[s^*] -D \cP_{EKW}[s^*] \cdot (\pi_x,\pi_y) \right)_{0,3}   \over \cP_{EKW}[s^*]_{0,2} }\\
& &-
{1\over 4} { \left( \psi_{\cP_{EKW}[s^*]}+\cP_{EKW}[s^*] -D \cP_{EKW}[s^*] \cdot (\pi_x,\pi_y) \right)_{0,2} (c-\cP_{EKW}[s^*]_{0,3})  \over \left(\cP_{EKW}[s^*]_{0,2}\right)^2 }\\
&=&
-{1\over 4} { \left( \partial_2\cP_{EKW}[s^*]\right)_{0,1}+2 \left(\cP_{EKW}[s^*]\right)_{0,2}   \over \cP_{EKW}[s^*]_{0,2} }\\
& &
-{1\over 4} { \left( \cP_{EKW}[s^*] \right)_{0,3}  - \left( \partial_2 \cP_{EKW}[s^*] \right)_{0,2}   \over \cP_{EKW}[s^*]_{0,2} }\\
& &-
{1\over 4} {\left( \left( \partial_2\cP_{EKW}[s^*]\right)_{0,0}+2 \left(\cP_{EKW}[s^*]\right)_{0,1}\right)(c-\cP_{EKW}[s^*]_{0,3})   \over \left(\cP_{EKW}[s^*]_{0,2} \right)^2}\\
& &-
{1\over 4} { \left(\left( \cP_{EKW}[s^*] \right)_{0,2}  - \left( \partial_2 \cP_{EKW}[s^*] \right)_{0,1}\right)(c-\cP_{EKW}[s^*]_{0,3})   \over \left(\cP_{EKW}[s^*]_{0,2}\right)^2 }\\
&=&
-1+{1\over 2} { c^*  \over \cP_{EKW}[s^*]_{0,2} }-{1\over 4}\left({3 {\cP_{EKW}[s^*]_{0,1}} \over {\cP_{EKW}[s^*]_{0,2}}} -1 \right) {c-c^* \over \cP_{EKW}[s^*]_{0,2}}\\
&=&
-1+C,\\
D_s t[s^*]\psi_{s^*}^{EKW}
&=&
-1
\end{eqnarray*}

Denote $d_1 \equiv D_s t_c[s^*] \psi$ and $d_2 \equiv  D_s t[s^*] \psi$, then
\begin{eqnarray}
\nonumber D \cR_{c^*} [s^*]  (\psi+a \psi_{s^*}^{EKW})  &=& \kappa \psi+ \lambda_* ( d_1-d_2+a(-1+C)+a) \psi_{s^*}^{EKW}\\
\nonumber  &=& \kappa \left( \psi+ {\lambda_*\over \kappa} ( d_1-d_2+aC) \psi_{s^*}^{EKW} \right),
\end{eqnarray}
and we see that the equation 
$$a={\lambda_*\over \kappa} ( d_1-d_2+aC)$$
has  a unique solution $a$ if 
\begin{equation}\label{kappa}
\kappa \ne \lambda_* C.
\end{equation}

For such $\kappa$, the vector
$$\psi+{\lambda_*(d_1-d_2)\over \kappa-\lambda_* C }  \psi_{s^*}^{EKW}$$
is an eigenvector of $D \cR_{c^*}[s^*]$ associated with the eigenvalue $\kappa$.

The eigenvalues $\kappa$ as in $(\ref{kappa})$ satisfy
$$|\kappa| > 0.00124359130859375$$ 


\end{proof}

We will now describe a rigorous computer upper bound on the spectrum of the operator $D R_c[s^*]$. 

\bigskip

\noindent {\it Proof of part ii) of Main Theorem.}

\medskip

\noindent {\it Step 1).} 
Recall the Definition  $\ref{B_space}$ of the Banach subspace $\cA_s(\rho)$ of $\cA(\rho)$. We will now choose a new bases $\{\psi_{i,j}\}$ in $\cA_s(\rho)$. Given $s \in \cA_s(\rho)$ we write its Taylor expansion in the form
$$s(x,y)=\sum_{(i,j)\in I} s_{i,j} \psi_{i,j}(x,y),$$ 
where $\psi_{i,j} \in \cA_s(\rho)$:
\begin{eqnarray}
\nonumber \tilde{\psi}_{i,j}(x,y)&=&x^{i+1} y^j,  \quad i=-1, \quad j \ge 0, \\
\nonumber \tilde{\psi}_{i,j}(x,y)&=& x^{i+1} y^j +{i+1 \over {j+1}}  x^{j+1}  y^i,  \quad i >-1, \quad j \ge i,\\
\nonumber \psi_{i,j}&=&{\tilde{\psi}_{i,j} \over \|\tilde{\psi}_{i,j} \|_\rho}, \quad i \ge -1, \quad j \ge \max\{0,i\},
\end{eqnarray}
and the index set $I$ of these basis vectors is defined as
$$I=\{(i,j) \in \field{Z}^2: \quad i \ge -1, \quad j \ge \max\{0,i\}  \}.$$

Denote $\tilde{\cA}_s(\rho)$ the set of all sequences 
$$\tilde{s}=\left\{s_{i,j}: s_{i,j} \in \field{C}, \sum_{(i,j) \in I} |s_{i,j}|  < \infty \right\}.$$

Equipped with the $l_1$-norm 
\begin{equation}\label{l_1}
|s|_1=\sum_{(i,j) \in I} |s_{i,j}|,
\end{equation}
$\tilde{\cA}_s(\rho)$ is a Banach space, which is isomorphic to $\cA_s(\rho)$. Clearly, the isomorphism $J: \cA_s(\rho) \mapsto \tilde{\cA_s}(\rho)$ is an isometry:
$$\| \cdot \|_\rho=| \cdot |_1.$$

We divide the set $I$  in three disjoint  parts: 
\begin{eqnarray}
\nonumber I_1 &=& \{(i,j) \in I: i+j<N \},\\
\nonumber  I_2 &=& \{(i,j) \in I:  N  \le i+j < M \},\\
\nonumber I_3 &=& \{(i,j) \in I:  i+j \ge M \},
\end{eqnarray} 
with
$$N=22, \quad M=60.$$
We will denote the cardinality of the first set as $D(N)$, the cardinality of $I_1 \cup I_2$ as $D(M)$.

We assign a single index to vectors $\psi_{i,j}$, $(i,j) \in I_1 \cup I_2$, as follows:
\begin{eqnarray}
\nonumber &&k(-1,0)=1,\quad k(-1,1)=2, \quad  \ldots ,\quad  k(-1,M)=M+1, \quad k(0,0)=M+2, \\
\nonumber &&  k(0,1)=M+3,\quad \ldots, \quad k\left(\left[{M-1\over 2} \right], M-1-\left[{M-1\over 2} \right]\right)=D(M).
\end{eqnarray}
This correspondence $(i,j)  \mapsto k$ is one-to-one, we will, therefore, also use the notation $(i(k),j(k))$. 


For any $s \in \cA_s(\rho)$, we define the following projections on the subspaces of the linear subspace $E_{D(N)}$ spanned by $\{\psi_k\}_{k=1}^{D(N)}$.
$$\Pi_k s=s_{i(k),j(k)} \psi_k, \quad \Pi_{E_{D(N)}} s = \sum_{m \le {D(N)}}  \Pi_m s.$$

Fix 
$$c_0=(s_0 \circ G[s_0])_{0,3},$$
where $s_0$ is some good numerical approximation of the fixed point. Denote for brevity $\cL_{c}^s \equiv D \cR_{c}[s]$. We can now write a matrix representation of the finite-dimensional linear operator 
$$\Pi_{E_{D(N)}}  \cL_{c_0}^{s_0}  \Pi_{E_{D(N)}}$$
as 
$$ D_{n,m}= \Pi_m \cL_{c_0}^{s_0} \psi_n .$$

\medskip

\noindent {\it Step 2).}  We compute  the unit eigenvectors $e_k$ of the matrix $D$ {\it numerically}, and form a $D(N)\times D(N)$ matrix $A$ whose columns are the approximate eigenvectors $e_k$. We would now like to find a rigorous bound $\bB$ on the inverse $B$ of $A$.

Let $B_0$ be an approximate inverse of $A$. Consider the operator $C$ in the Banach space of all $D(N) \times D(N)$ matrices (isomorphic to $\fR^{D(N)^2}$) equipped with the $l_1$-norm, given by
$$C[B]=(A+\fI)B-\fI.$$
Notice, that if $B$ is a fixed point of $C$ then $A  B=\fI$. Consider a ``Newton map'' for $C$:
$$N[z]=z+C[B_0-B_0z]-B_0+B_0z.$$
If $z$ is a fixed point of $N$, then $B_0-B_0z$ is a fixed point of $C$. Furthermore,
$$DN[z]=\fI-A B_0$$
is constant. We therefore, estimate $l_{\infty}$ matrix norms
$$\|N[0]\|_1 \le \equiv \eps, \quad \|\fI-A B_0\|_1 \le \equiv \cD,$$
and obtain via the Contraction Mapping Principle, that the inverse of $A$ is contained in the $l_1$ \ $\delta$-neighborhood of $B_0$, with
$$\delta=\|B_0\|_1 {\eps \over 1-\cD}.$$

\medskip

\noindent {\it Step 3).}  Define the linear operator 
$$\cA=A \Pi_{E_{D(N)}} \bigoplus \left(\fI- \Pi_{E_{D(N)}}\right),$$
and its inverse
$$\cB=B \Pi_{E_{D(N)}} \bigoplus \left(\fI- \Pi_{E_{D(N)}}\right).$$
Consider the action of the operator $\cL_{c_0}^s$ in the new basis
$$e_k={\tilde{e}_k \over \| \tilde{e}_k \|_\rho}, 1 \le k \le D(N), \quad e_k \equiv \psi_k, \quad k > D(N),$$
where
\begin{equation}\label{basis}
[e_1, e_2, \ldots, e_{D(N)}] \equiv [\psi_1, \psi_2, \ldots,\psi_{D(N)}] A,
\end{equation}
in $\cA_s(\rho)$. To be specific, we consider a new Banach space $\hat{\cA}_s(\rho)$: the space of all  functions 
$$s=\sum_k c_k e_k,$$
analytic on a bi-disk $\cD_\rho$, for  which the norm
$$\|s\|_1=\sum_k |c_k|$$
is finite. 

For any $s \in \hat{\cA}_s(\rho)$, we define the following projections on the basis vectors.
$$P_i s=c_i  e_i, \quad P_{>k} s=\left(\fI-\sum_{i=1}^{k} P_i \right) s.$$

Clearly, the Banach spaces $\cA_s(\rho)$ and $\hat{\cA}_s(\rho)$ are isomorphic, while the norms $\| \cdot \|_\rho$ and $\| \cdot \|_1$ are equivalent. We can use $(\ref{basis})$ to compute the equivalence constant $\alpha$ in 
$$\alpha \|  \cdot \|_1  \ge \| \cdot \|_\rho = |  \cdot |_1$$
(recall, norms $\| \cdot \|_\rho$ and $| \cdot |_1$, defined in $(\ref{l_1})$ are equal). 
Notice, that
\begin{eqnarray}
\nonumber s&=&\sum_k c_k e_k= \sum_{1 \le k \le D(N)} c_k \left( \sum_{1 \le i \le D(N)} A^i_k \psi_i \right) +\sum_{k>D(N)} c_k \psi_k \\
\nonumber &=& \sum_{1 \le i \le D(N)} \left(\sum_{1 \le k \le D(N)} c_k A^i_k \right) \psi_i+\sum_{i>D(N)} c_i \psi_i ,
\end{eqnarray}
therefore, if $A^i$ is the $i$-th row of the matrix $A$, then
\begin{eqnarray}
\nonumber |s|_1&=&\sum_{1 \le i \le D(N)}  \left| \sum_{1  \le k  \le D(N)} c_k A^i_k \right| + \sum_{i>D(N)} |c_i| \\
\nonumber &\le&  \sum_{1 \le i \le D(N)} \left(  \|A^i\|_\infty \sum_{1 \le k\le D(N)} |c_k|\right)+\sum_{i>D(N)} |c_i|\\
\nonumber &=& \left[\sum_{1 \le i \le D(N)}  \|A^i\|_\infty \right] \sum_{1 \le k\le D(N)} |c_k|+\sum_{i>D(N)} |c_i| \\
\nonumber &\le& \max\left\{ \sum_{1 \le i \le D(N)}  \|A^i\|_\infty,1\right\}\|s\|_1 
\end{eqnarray}
and 
$$
\alpha=\max\left\{\sum_{ 1 \le i \le D(N)} \| A^i\|_\infty,1 \right\}.
$$

The constant has been rigorously evaluated on the computer:
\begin{equation}\label{eq_const}
\alpha  \le 49.435546875.
\end{equation}

The operator $\cL_{c_0}^s$  is ``almost'' diagonal in this new basis for all $s \in \cB_\varrho(s_0) \subset \cA_s(\rho)$,
$$\varrho = 6.0 \times 10^{-12}.$$

 We proceed to quantify this claim.

\bigskip
\begin{tabular}{l l l l}
    \!\!\!\!\!\!\!\!\!\!\! $\| P_2 \cL_{c_0}^s e_1 \|_1$ &  \!\!\!\!\!\!\! $\le  5.19007444381714 \times 10^{-4}$ , &  \!\!\!\!\!\!\! $\| P_1 \cL_{c_0}^s e_2 \|_1$ &   \!\!\!\!\!\!\!  $\le 1.76560133695602 \times 10^{-4}$,\\
   \!\!\!\!\!\!\!\!\!\!\! $\| P_{>2}  \cL_{c_0}^s e_1 \|_1$ &   \!\!\!\!\!\!\! $\le 3.5819411277771 \times 10^{-3}$,&  \!\!\!\!\!\!\! $\| P_{>2}  \cL_{c_0}^s e_2 \|_1$ &    \!\!\!\!\!\!\!  $\le 1.49521231651306 \times 10^{-3}$,\\
    \!\!\!\!\!\!\!\!\!\!\! $\| P_1  \cL_{c_0}^s P_{>2} \|_1$ &  \!\!\!\!\!\!\! $\le 1.22539699077606 \times 10^{-4}$,&  \!\!\!\!\!\!\! $\| P_2  \cL_{c_0}^s P_{>2} \|_1$ & \!\!\!\!\!\!\!  $\le 8.23289155960083 10^{-5}$,
\end{tabular}

\bigskip
\noindent for all $h \in \cB_\varrho(s_0) \subset \cA_s(\rho)$.

\medskip

\noindent {\it Step 4).} We will  now demonstrate existence of a fixed point $s^*_{c_0}$  in $\cB_\varrho \in \cA_s(\rho)$, of the operator $\cR_{c_0}$, where
$$c_0=(s_0 \circ G[s_0])_{0,3}.$$

We will use the Contraction Mapping  Principle in the following form. Define the following linear operator on $\cA_s(\rho)$
$$M  \equiv \left[\fI - K \right]^{-1},$$
where
$$K h \equiv \hat{\delta}_1 P_1 h + \hat{\delta}_2 P_2 h,$$
and $\hat{\delta}_1$ and $\hat{\delta}_2$ are defined via 
$$P_1 \cL_{c_0}^{s_0} e_1=\hat{\delta}_1 e_1, \quad P_2 \cL_{c_0}^{s_0} e_2=\hat{\delta}_2 e_2.$$
Consider the operator  
$$\cN[h]=h+\cR_{c_0}[s_0+M h]-(s_0+M h)$$
on $\hat{\cA}_s(\rho)$ and for all $z$. 

The operator $\cN$ is analytic and compact on $\cB_{ \|M\|^{-1}_1 \alpha^{-1}  \varrho}(0)$, where $c$ is the norm equivalence constant $(\ref{eq_const})$, and
$$\|M\|_1=\max \left\{\left| {1 \over 1-\hat{\delta}_1} \right|, \left| {1 \over 1-\hat{\delta}_2} \right|,1 \right\}=1.$$

Notice, that if $h^*$ is a fixed point of $\cN$, then $s_0+M h^*$ is a fixed point of $R_{c_0}$.

The derivative norm of the operator $\cN$ is ``small'', indeed,
\begin{eqnarray}
\nonumber D \cN[h] &=& \field{I} +D \cR_{c_0}[s_0+M h] \cdot M -M \\
\nonumber &=& \left[ M^{-1} +  D  \cR_{c_0} [s_0+M h]  -  \field{I} \right] \cdot M \\
\nonumber &=& \left[\field{I}-K  +D \cR_{c_0}[s_0+M h] -   \field{I} \right] \cdot M \\
\nonumber &=& \left[ D \cR_{c_0}[s_0+M h] - K \right] \cdot M.
\end{eqnarray}

We have evaluated the operator norm of this derivative for all  $h \in \cB_{ \alpha^{-1}  \varrho}(0)$:
$$ \| D \cN[h] \|_1  \equiv \cD \le 0.1258544921875$$

At the same time
$$\|\cN[0]\|_1 = \|\cR_{c_0}[s_0]-s_0\|_1 \equiv \epsilon   \le 4.9560546875 \times 10^{-16}.$$

We can now  see that the hypothesis of the Contraction Mapping Principle is indeed verified:
$$\epsilon < 4.9560546875 \times 10^{-14} < 1.058349609375 \times 10^{-13} < (1-\cD) \alpha^{-1} \varrho,$$
and therefore,  the neighborhood $\cB_{\epsilon/(1-\cD) }(0) \subset \cB_{0.5 \alpha^{-1} \varrho} (0)$ contains a fixed point $h^*$ of $\cN$, i.e. the neighborhood $\cB_{\varrho/2} (s_0)\subset \cB_\varrho(s_0) \subset \cA_s(\rho)$ contains a fixed point $s^*_{c_0}=s_0+M h^*$ of $\cR_{c_0}$.

We quote here for reference purposes the bounds on the values of the scalings $\lambda[s^*_c]$ and $\mu[s^*_c]$:
\begin{eqnarray}
\label{lambda_b} \lambda[s^*_c]&=&[-0.248875288734817765,-0.248875288702286711], \\
\label{mu_b} \mu[s^*_c]&=&[ 0.0611101382055370338, 0.0611101382190655586].
\end{eqnarray}

\medskip

\noindent {\it Step 5).}  Notice, that in general,
$$\left(s^*_{c_0} \circ G[s^*_{c_0}]\right)_{0,3} \ne c,$$ 
therefore
$$t_{c_0}[s^*_{c_0}] \ne 0.$$

However, $t_{c_0}[s^*_{c_0}]$ is a small number which we have estimated to be
\begin{equation}\label{t_c}
|t_{c_0}[s^*_{c_0}]| < 7.89560771750566329 \times 10^{-12}.
\end{equation}

Consider the map $F^*_{c_0}$ generated by $s_{c_0}^*$. Recall that by Theorem $\ref{fp_properties}$, there exists a simply connected open set $\cD$ such that $F^*_{c_0} \in \cO_2(\cD)$. The fixed point equation for the map $F^*_{c_0}$ is  as follows:
$$\Lambda_{F^*_{c_0}}^{-1} \circ S_{t_{c_0}[s^*_{c_0}]}^{-1} \circ F^*_{c_0} \circ F^*_{c_0} \circ S_{t_{c_0}[s^*_{c_0}]} \circ \Lambda_{F^*_{c_0}}=F^*_{c_0}.$$

\begin{flushright}$\Box$\end{flushright}

\bigskip


\end{document}